\newtheorem{lemma}{Lemma}[section]
\newtheorem{proposition}[lemma]{Proposition}
\newtheorem{theorem}[lemma]{Theorem}
\newtheorem{corollary}[lemma]{Corollary}
\theoremstyle{remark}
\theoremstyle{definition}
\numberwithin{equation}{section}
\newcommand{\mb}{\mathbf}
\newcommand{\mc}{\mathcal}
\newcommand{\R}{{\mathbb R}}
\newcommand{\N}{{\mathbb N}}
\newcommand{\C}{{\mathbb C}}
\newcommand{\rst}[1]{\ensuremath{{\mathbin |}%
\raise-.5ex\hbox{$#1$}}} 
\newcommand{\bs}[1]{\boldsymbol{#1}}
\title{Stable self-similar blow up for energy subcritical wave equations}
\author{Roland Donninger}
\address{\'Ecole Polytechnique F\'ed\'erale de Lausanne, 
Department of Mathematics, Station 8, CH-1015 Lausanne, Switzerland}
\email{roland.donninger@epfl.ch}
\author{Birgit Sch\"orkhuber}
\address{Vienna University of Technology, Institute of Analysis and Scientific Computing,
Wiedner Hauptstra\ss e 8,  A-1040 Vienna, Austria}
\email{birgit.schoerkhuber@tuwien.ac.at}
\thanks{The second author acknowledges partial support from the Austrian Science Fund
(FWF), grants P22108, P24304, and I395; the Austrian-French Project of
the Austrian Exchange Service (\"OAD); and the Innovative Ideas
Program of Vienna University of Technology.}
\begin{document}

\maketitle

\begin{abstract}
We consider the semilinear wave equation
\[ \partial_t^2 \psi-\Delta \psi=|\psi|^{p-1}\psi \]
for $1<p\leq 3$ with radial data in $\R^{3}$.
This equation admits an explicit spatially homogeneous blow up solution $\psi^T$ given by
$$ \psi^T(t,x)=\kappa_p (T-t)^{-\frac{2}{p-1}} $$
where $T>0$ and $\kappa_p$ is a $p$-dependent constant.
We prove that the blow up described by $\psi^T$ is stable against small perturbations
in the energy topology.
This complements previous results by Merle and Zaag.
\end{abstract}

\section{Introduction}

\noindent We study the Cauchy problem
\begin{equation} \left \{
\begin{array}{l}
\label{eq_pwave}
\partial_{t}^2\psi-\Delta \psi=|\psi|^{p-1}\psi \\
\psi[0]=(f,g) \end{array} \right. 
\end{equation}
for $\psi: [0,\infty) \times \R^3 \to \R$ and $1<p \leq 3$.
The associated energy reads
\[ E(\psi(t,\cdot),\psi_t(t,\cdot))=\frac12 \int_{\R^3} \left (|\psi_t(t,x)|^2
+|\nabla_x \psi(t,x)|^2 \right )dx-\frac{1}{p+1}\int_{\R^3}|\psi(t,x)|^{p+1}dx.
 \]
Furthermore, Eq.~\eqref{eq_pwave} is invariant under the scaling transformation
\[\psi(t,x) \mapsto \psi_\lambda(t,x):=\lambda^{-\frac{2}{p-1}}\psi(t/\lambda,x/\lambda) \] 
for $\lambda>0$ and
the energy scales according to
\[ E(\psi_\lambda(t,\cdot),\partial_t \psi_\lambda(t,\cdot))=\lambda^{-\frac{5-p}{p-1}}
E(\psi(t/\lambda,\cdot),\psi_t(t/\lambda,\cdot)). \]
Thus, Eq.~\eqref{eq_pwave} is energy subcritical if $1<p<5$, critical if $p=5$ and supercritical
if $p>5$.
Note, however, that the usual blow up heuristics (energy conservation prevents the solution from
shrinking to ever smaller scales in the subcritical case) do not apply here since the energy is
not positive definite.

Local well-posedness of Eq.~\eqref{eq_pwave} in $H^1(\R^3)\times L^2(\R^3)$ follows by standard arguments (use
Duhamel's formula and Sobolev embedding to set up
a contraction scheme).
However, Eq.~\eqref{eq_pwave} is not globally well-posed and it is well-known that initial data
with negative energy lead to singularity formation in finite time \cite{Lev74}.
More explicitly, one may look for self-similar solutions
which are by definition invariant under the natural scaling.
A particularly simple example of this type is obtained by neglecting the Laplacian altogether and solving
the remaining ODE in $t$
which yields the solution
\[ \psi^T(t,x)=\kappa_0^{\frac{1}{p-1}}(T-t)^{-\frac{2}{p-1}} \] 
where $T>0$ and $\kappa_0=\frac{2(p+1)}{(p-1)^2}$.
We refer to $\psi^T$ as the \emph{fundamental self-similar solution}. 
Although $\psi^T$ is homogeneous in space, by finite speed of propagation it can still be used to construct
compactly supported smooth initial data that lead
to blow up at time $t=T$.
In fact, one expects that Eq.~\eqref{eq_pwave} admits many self-similar blow up solutions, even in
the radial case. 
At least for $p=3$ this was proved by Bizo\'n et.~al.~\cite{BBMW10} (there are similar
results for $p\geq 7$ \cite{BMW07}).
We remark that the situation for the corresponding problem in one space dimension is fundamentally 
different since in this case there exists a unique (up to symmetries) self-similar blow up
solution. 

In order to understand the dynamics of Eq.~\eqref{eq_pwave} it is important to 
analyse possible blow up scenarios.
In two remarkable papers \cite{MerZaa03}, \cite{MerZaa05} Merle and Zaag proved
that \emph{any} blow up solution of Eq.~\eqref{eq_pwave} 
blows up at the rate $(T-t)^{-\frac{2}{p-1}}$.
However, the precise shape of the blow up profile depends on the data.
Numerical work by Bizo\'n, Chmaj and Tabor \cite{BCT04} suggests that the blow up described by $\psi^T$ is
the ``generic'' one.
To be more precise, they observe in their simulations that the future development of 
``generically'' chosen radial blow up data
converges to $\psi^T$ locally near the center of spherical symmetry.
In the present paper we analyse the stability of $\psi^T$ and obtain the following result, see 
Theorem \ref{Th:Main} below for the precise formulation.

\begin{theorem}[main result, qualitative formulation]
There exists an open set (in the energy topology) of radial data that lead to blow up
via $\psi^T$. In other words, the blow up described by $\psi^T$ is stable.
\end{theorem}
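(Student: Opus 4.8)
The plan is to study the problem in so-called similarity variables adapted to the blow up point $(T,0)$, which turn the self-similar profile $\psi^T$ into a static solution of an autonomous evolution equation. Concretely, I would introduce $\tau=-\log(T-t)$ and $\rho=x/(T-t)$ and set $\psi(t,x)=(T-t)^{-\frac{2}{p-1}}\phi(\tau,\rho)$, so that the backward light cone from $(T,0)$ becomes the fixed cylinder $\{|\rho|\le 1\}\times[0,\infty)$, and $\psi^T$ corresponds to the constant $\phi\equiv\kappa_0^{1/(p-1)}$. Rewriting \eqref{eq_pwave} in these coordinates and linearising around this constant yields an evolution equation $\partial_\tau \Phi=\mc L\Phi+N(\Phi)$ on a Hilbert space $\mc H$ (an energy space for radial functions on the unit ball), where $\mc L$ is a linear operator with a perturbed wave-type principal part plus a bounded potential coming from the nonlinearity, and $N$ collects the superlinear remainder. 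The first main step is to show that $\mc L$ generates a strongly continuous one-parameter semigroup on $\mc H$; I would do this via a Lumer--Phillips / bounded-perturbation argument, using the conformal-type energy structure of the equation in similarity variables to get the requisite dissipativity estimate for the free part, then absorbing the potential as a bounded perturbation.

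The second main step is the spectral analysis of $\mc L$. Because the symmetries of the equation (in particular the time-translation freedom $T$) generate genuine growing modes of the linearised flow, the spectrum of $\mc L$ will contain an unstable eigenvalue, and I expect exactly one such eigenvalue, at $\lambda=1$, with a one-dimensional eigenspace spanned by the explicit function obtained by differentiating the family $\psi^T$ with respect to $T$. The heart of this step is a \emph{spectral gap}: one must prove that apart from this single eigenvalue $\lambda=1$, the spectrum of $\mc L$ is confined to a half-plane $\{\operatorname{Re}\lambda\le-\omega\}$ for some $\omega>0$ (modulo possibly a harmless eigenvalue at $0$ which, if present from scaling, must be shown not to obstruct the argument). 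Concretely this means: identify the essential spectrum and push it into the stable half-plane using a Riesz-projection / compact-perturbation argument on the generator, and then rule out any other eigenvalues with $\operatorname{Re}\lambda\ge-\omega$ by an ODE analysis of the corresponding radial eigenvalue problem on $(0,1)$, controlling the behaviour at the regular singular points $\rho=0$ and $\rho=1$. Having the gap, standard semigroup perturbation theory (the spectral mapping theorem for the point spectrum of the semigroup, plus a resolvent bound on vertical lines) upgrades it to the decay estimate $\|e^{\tau\mc L}(I-P)\Phi\|_{\mc H}\lesssim e^{-\omega\tau}\|(I-P)\Phi\|_{\mc H}$, where $P$ is the rank-one spectral projection onto the $\lambda=1$ eigenspace.

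The third step is the nonlinear stabilisation. I would set up the Duhamel formulation $\Phi(\tau)=e^{\tau\mc L}\Phi(0)+\int_0^\tau e^{(\tau-\sigma)\mc L}N(\Phi(\sigma))\,d\sigma$ on the stable subspace, checking first that $N$ is locally Lipschitz on $\mc H$ with $N(0)=0$ and $DN(0)=0$ --- this is exactly where the subcriticality hypothesis $1<p\le 3$ enters, since it guarantees that the nonlinear Nemytskii-type map is well-behaved on the radial energy space in $\R^3$ (Sobolev embedding gives the needed product estimates only in this range). The unstable direction is handled by the usual Lyapunov--Perron / co-dimension-one argument: the growing mode is suppressed by choosing the blow up time $T$ (equivalently, a one-parameter adjustment of the initial data along the $\psi^T$-family), so that for data in a small ball in $\mc H$ there is a choice of $T$ for which the projection of $\Phi(\tau)$ onto the unstable eigenspace vanishes for all $\tau$; a fixed-point argument in a space of exponentially decaying trajectories then produces a global-in-$\tau$ solution with $\|\Phi(\tau)\|_{\mc H}\lesssim e^{-\omega\tau}$. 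Translating back, this exactly says that the original solution stays in the light cone of $(T,0)$ and converges to $\psi^T$ in the rescaled energy norm, i.e.\ blows up via $\psi^T$; since the set of admissible data is an open ball (after the co-dimension-one correction, which one shows depends smoothly on the data), this yields the claimed open set in the energy topology.

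The main obstacle I anticipate is the \emph{spectral gap} in the second step: proving that $\mc L$ has no spectrum other than $\lambda=1$ in a sector/half-plane strictly to the right of $-\omega$. The essential-spectrum part requires a careful choice of the Hilbert space and a clean identification of the free semigroup's growth bound in similarity variables; the point-spectrum part requires a nonperturbative ODE argument for the eigenvalue equation on $(0,1)$, and ruling out eigenvalues near the imaginary axis (other than the symmetry-induced ones) can be delicate because there is no positivity to exploit --- the energy is indefinite, so one cannot appeal to a naive variational characterisation. Everything else (semigroup generation, local Lipschitz bounds for $N$, the Lyapunov--Perron step) I expect to be technically involved but conceptually routine given the framework above.
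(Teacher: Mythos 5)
Your plan follows essentially the same route as the paper: similarity coordinates, semigroup generation via Lumer--Phillips plus a bounded (compact) perturbation, identification of the single symmetry eigenvalue $\lambda=1$ with a rank-one Riesz projection, a Gearhart--Pr\"uss--Greiner resolvent argument for decay on the stable subspace, locally Lipschitz nonlinear estimates exploiting $1<p\le 3$, and a Lyapunov--Perron correction removed by adjusting $T$. The spectral-gap obstacle you flag is resolved in the paper exactly along the lines you propose, since the radial eigenvalue ODE on $(0,1)$ reduces to a hypergeometric equation whose connection coefficients can be computed explicitly, supplemented by a separate lemma showing the algebraic multiplicity of $\lambda=1$ is one.
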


Here, ``energy topology'' refers to the topology generated by the energy of the \emph{free} wave
equation. In fact, we use a slight modification thereof which has better local behavior, see below.
Before going into more details, let us briefly comment on possible extensions of our result.
First of all, the restriction to radial data is only technical and can be quite 
easily removed.
However, since the nonradial case is not expected to reveal any new interesting phenomena,
we stick to radial data so as not to obscure the exposition by too many technicalities.
A more important aspect concerns the admissible values of $p$.
It should be possible to extend our result to the full subcritical regime
$1<p<5$ with some modifications.
For instance, it is necessary to require slightly more regularity (e.g.~of Strichartz type)
if $3<p<5$ in order to control the nonlinearity.
This is reminiscent of the standard well-posedness theory for semilinear wave equations and we
will pursue this matter elsewhere.
We even claim that the stability of $\psi^T$ holds true in the full supercritical
regime $p\geq 5$ albeit in a topology significantly stronger than the energy, cf.~\cite{wavemapslin}, 
\cite{wavemapsnonlin} for analogous results on supercritical wave maps.

In a certain sense our work is complementary to the results of Merle and Zaag \cite{MerZaa05}.
In their approach they consider \emph{any} blow up solution whereas our method is perturbative near
$\psi^T$.
Consequently, the strength of their result lies in its generality: it is a statement about
all possible blow up scenarios.
However, it is not true that every blow up solution converges to $\psi^T$ since 
there exist many self-similar profiles.
We remark that the situation in the one dimensional case is entirely different due
to the uniqueness of the self-similar solution.
As a consequence, the Merle-Zaag method even yields profile convergence for the 
problem in $\R^{1+1}$ \cite{MerZaa07}.
Furthermore, a number of beautiful results emerged from this approach, see, e.g.~\cite{MerZaa08}, 
\cite{MerZaa08a}, \cite{MerZaa10}, \cite{CotZaa11}.

Our method of proof follows essentially our work on the wave maps problem \cite{wavemapslin}, 
\cite{wavemapsnonlin}.
However, there are important differences. 
First of all, the problem at hand is energy subcritical and this allows us to work in
the energy topology whereas for the wave maps problem we had to require more regularity.
Furthermore, the present result is completely rigorous and there is no need for any 
numerical input as was the case in \cite{wavemapsnonlin}.
This is because the nonself-adjoint spectral problem associated to the linear stability of
$\psi^T$ can be solved explicitly in terms of hypergeometric functions.
Finally, we have improved the method in order to avoid one additional degree of differentiability
which was still necessary in \cite{wavemapsnonlin}.
Consequently, the motivation for this paper is in fact twofold:
First, our result is intended to complement the work of Merle and Zaag in order to obtain a fairly 
satisfactory description of the blow up behavior of Eq.~\eqref{eq_pwave}. 
Second, we wanted to demonstrate the wide applicability of our methods developed in 
\cite{wavemapslin} and \cite{wavemapsnonlin}.

Our approach is functional analytic. 
We first introduce new coordinates adapted to self-similarity 
and linearize the equation around the solution $\psi^T$.
The resulting linear problem involves a highly nonself-adjoint operator (due to the change
of coordinates) and it is therefore studied by semigroup methods.
This yields the linear stability of $\psi^T$.
In fact, this stability is modulo the time translation symmetry of the problem which manifests itself
in the form of an unstable mode of the linearized operator.
We remove this instability by a Riesz projection.
The nonlinear stability is then proved by a fixed point argument where we have to take
into account the instability caused by the symmetry.
In order to deal with this problem we employ an infinite-dimensional version of the Lyapunov-Perron method
from dynamical systems theory: we force nonlinear stability by
modifying the data.
In a last step we undo this modification by shifting the blow up time.

\subsection{Formulation of the Cauchy problem and the main result}
We restrict ourselves to radial solutions and study the initial value problem in the backward 
lightcone $\mc{C}_T$ of the blow up point $(T,0)$ which is defined by
$$ \mc{C}_T:=\{(t,r): t \in (0,T), r \in [0,T-t]\}. $$
More precisely, we consider
\begin{equation}
\label{eq:maincauchy}
\left \{ \begin{array}{l}
\psi_{tt}(t,r)-\psi_{rr}(t,r)-\frac{2}{r}\psi_r(t,r)-|\psi(t,r)|^{p-1}\psi(t,r)=0 \mbox{ for } (t,r) \in \mc{C}_T \\
\psi(0,r)=f(r), \psi_t(0,r)=g(r) \mbox{ for }r \in [0,T]
         \end{array} \right .
\end{equation}
with given initial data $(f,g)$. We are interested in the stability of $\psi^T$ under small perturbations $\varphi$. 
Thus, we insert the ansatz $\psi=\psi^T+\varphi$ into Eq.~ (\ref{eq:maincauchy}) and expand the nonlinearity according to
$$|\psi^T+\varphi|^{p-1}(\psi^T+\varphi)= |\psi^T|^{p-1}\psi^T + p|\psi^T|^{p-1} \varphi + N_T(\varphi),$$ 
where $N_T$ denotes the nonlinear remainder. Since $\psi^T(t,r) > 0$ for all $(t,r) \in \mc{C}_T $ we obtain

\begin{equation}
\label{eq:pwavecauchy}
\left \{ \begin{array}{l}
\varphi_{tt}-\varphi_{rr}-\frac{2}{r}\varphi_r-p(\psi^T)^{p-1} \varphi - N_T(\varphi)=0 \mbox{ in } \mc{C}_T \\
\varphi(0,r)=f(r)-\psi^T(0,r), \varphi_t(0,r)=g(r)-\psi^T_t(0,r) \mbox{ for }r \in [0,T].
\end{array} \right .
\end{equation}

\subsection{Energy norm}

We want to study the Cauchy problem in a backward lightcone and, since our approach is perturbative, we need a $\textit{local energy norm}$ derived from the conserved energy associated to the \textit{free equation} 
\begin{equation}\label{Eq:Free}
\varphi_{tt}-\varphi_{rr}-\frac{2}{r}\varphi_r=0,
\end{equation}
which is given by
$$\int_0^{\infty} r^2 [\varphi_t(t,r)^2 + \varphi_r(t,r)^2]dr.$$
However, this does not define such a norm due to the lack of a boundary condition for $\varphi$ at $r=0$. 
By integration by parts (and assuming sufficient decay at infinity) 
it can be easily seen that the above expression for the energy is equivalent to 
\begin{equation}\label{Def:FreeEnergy}
E(\varphi) = \int_0^{\infty} r^2\varphi_t(t,r)^2 +[r\varphi_r(t,r)+ \varphi(t,r)]^2 dr.
\end{equation}
Another way of motivating this is to define $\tilde \varphi := r \varphi$, such that Eq.~ (\ref{Eq:Free}) transforms to the $1+1$ wave equation
$$\tilde \varphi_{tt} - \tilde \varphi_{rr} = 0$$
with conserved energy 
$$\int_0^{\infty} \tilde \varphi_t(t,r)^2 + \tilde \varphi_r(t,r)^2 dr.$$
Writing this expression in terms of the original field yields (\ref{Def:FreeEnergy}). For $(f,g) \in C^1[0,R] \times C[0,R]$, $R >0$ we define
\begin{equation}\label{Def:LocalEnergy}
\|(f,g)\|_{\mc E(R)}^2 := \int_0^R |r f'(r) + f(r)|^2 dr + \int_0^R r^2 |g(r)|^2 dr. 
\end{equation}
$r f'(r) + f(r)=0$ implies $f(r)=\frac{c}{r}$ and the requirement $f \in C^1[0,R]$ yields $c=0$ such that $\mc \| \cdot \|_{\mc E(R)}$ defines a norm on $C^1[0,R] \times C[0,R]$.
We consider the expression (\ref{Def:LocalEnergy}) in the backward lightcone of the blow up point $(T,0)$ and insert the fundamental self--similar solution to obtain
\begin{equation}\label{energy_blowup_groundstate}
\|(\psi^T(t,\cdot),\psi_t^T(t,\cdot))\|_{\mc E(T-t)} = C_p (T-t)^{-\frac{5-p}{2(p-1)}} 
\end{equation}
where $C_p > 0$ denotes a $p$-dependent constant. Evidently, as $t \to T-$,
this quantity blows up in the energy subcritical case, i.e., for $1<p<5$.

\begin{theorem}[main result, quantitative version]\label{Th:Main}
Fix $1< p\leq 3$ and $\varepsilon>0$. Let $(f,g)$ be radial initial data with
$$\|(f,g) - (\psi^1(0,\cdot), \psi_t^1(0,\cdot))\|_{\mc E(\frac{3}{2})}$$
sufficiently small. Then there exists a $T>0$ close to $1$ such that the Cauchy problem
\begin{equation}
\left \{ \begin{array}{l}
\partial_t^2 \psi-\Delta \psi = |\psi|^{p-1} \psi  \\ 
\psi[0]=(f,g) 
\end{array} \right .
\end{equation}
has a unique radial solution $\psi:  \mc{C}_T \to \R$ which
satisfies
$$ (T-t)^{\frac{5-p}{2(p-1)}}\|(\psi(t,\cdot),\psi_t(t,\cdot))-(\psi^T(t,\cdot),\psi^T_t(t,\cdot))\|_{\mc E(T-t)} 
\leq C_{\varepsilon} (T-t)^{|\omega_p|-\varepsilon}$$
for all $t \in [0,T)$
where $\omega_p := \max \left \{-1,\tfrac{1}{2} - \tfrac{2}{p-1} \right\}$ 
and $C_{\varepsilon} > 0$ is a constant 
which depends on $\varepsilon$.
\end{theorem}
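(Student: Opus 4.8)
The plan is to recast \eqref{eq:pwavecauchy} as an autonomous abstract evolution equation in a Hilbert space modeled on the energy norm and to prove decay by semigroup theory combined with a Lyapunov--Perron type fixed point argument. First I would pass to the self-similar coordinates $\tau=\log\frac{T}{T-t}$, $\rho=\frac{r}{T-t}$, mapping the backward lightcone $\mc C_T$ onto the cylinder $[0,\infty)\times[0,1]$, and rescale the perturbation by the factor $(T-t)^{-\frac{2}{p-1}}$ so that $\psi^T$ becomes the constant $\kappa_0^{1/(p-1)}$. Writing the rescaled perturbation as a first-order-in-$\tau$ pair $\Phi(\tau)$ in the Hilbert space $\mc H$ obtained by completing $C^1[0,1]\times C[0,1]$ in the analogue of \eqref{Def:LocalEnergy} on $[0,1]$, a direct computation (essentially \eqref{energy_blowup_groundstate}) gives $\|\Phi(\tau)\|_{\mc H}=(T-t)^{\frac{5-p}{2(p-1)}}\|(\varphi(t,\cdot),\varphi_t(t,\cdot))\|_{\mc E(T-t)}$, so that the asserted bound is equivalent to the exponential decay $\|\Phi(\tau)\|_{\mc H}\le C_\varepsilon e^{-(|\omega_p|-\varepsilon)\tau}$. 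In these variables the equation becomes $\partial_\tau\Phi=\mc L\Phi+\mc N(\Phi)$ with $\mc L=\mc L_0+L'$, where $\mc L_0$ is the generator coming from the free wave equation \eqref{Eq:Free} in similarity coordinates, $L'$ is a bounded (indeed compact, since $(\psi^T)^{p-1}$ rescales to a constant and the first component embeds compactly into the second) perturbation, and $\mc N$ collects the rescaled remainder $N_T(\varphi)$.

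Next I would establish, via the Lumer--Phillips theorem, that $\mc L_0$ (on a suitable domain, after subtracting a constant) is m-dissipative and hence generates a strongly continuous semigroup $S_0$ on $\mc H$ with an explicit growth bound; bounded perturbation then yields a semigroup $S$ for $\mc L$. The crucial point is the spectral analysis of $\mc L$: differentiating the family $\psi^T$ in $T$ produces an explicit solution of the linearized equation which in the new variables is an eigenfunction of $\mc L$ with eigenvalue $1$ (the time-translation instability), and one must prove that, apart from $\lambda=1$, the spectrum of $\mc L$ in $\{\operatorname{Re}\lambda>\omega_p\}$ is empty. Since the eigenvalue equation $(\lambda-\mc L)u=0$ reduces to a second-order ODE whose solutions are hypergeometric functions, this can be decided explicitly by a connection-coefficient computation (which for $p>\tfrac73$ exhibits one further eigenvalue with $\operatorname{Re}\lambda=\tfrac12-\tfrac{2}{p-1}$, whereas for $p\le\tfrac73$ the threshold $-1$ is dictated by the growth bound of $S_0$). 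Combined with resolvent bounds along the vertical line $\operatorname{Re}\lambda=\omega_p+\varepsilon$ — obtained from the hypergeometric asymptotics as $|\operatorname{Im}\lambda|\to\infty$ together with the $S_0$-bound controlling the essential spectrum — this gives, after removing the eigenvalue $1$ by its Riesz projection $\mb P$, the estimates $S(\tau)\mb P=e^{\tau}\mb P$ and $\|S(\tau)(1-\mb P)u\|_{\mc H}\le C_\varepsilon e^{(\omega_p+\varepsilon)\tau}\|u\|_{\mc H}$. I expect this spectral analysis — in particular ruling out eigenvalues in the gap and the resolvent estimates needed to bypass the possible failure of the spectral mapping theorem — to be the main obstacle.

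Finally, because $\psi^T$ is a positive constant in the new variables, $N_T(\varphi)$ is a genuine quadratic Taylor remainder, and the radial Sobolev-type embedding implicit in $\mc H$ (which controls the first component in $L^6(\rho^2\,d\rho)$) shows $\mc N:\mc H\to\mc H$ is locally Lipschitz with $\mc N(0)=0$, $D\mc N(0)=0$ and $\|\mc N(u)-\mc N(v)\|_{\mc H}\lesssim(\|u\|_{\mc H}+\|v\|_{\mc H})^{\min\{1,p-1\}}\|u-v\|_{\mc H}$ near the origin; this is exactly where $p\le 3$ enters, so that $|\varphi|^{p-1}\varphi$ lands in the weighted $L^2$ inside $\mc H$. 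I would then study the Duhamel equation $\Phi(\tau)=S(\tau)\mb u+\int_0^\tau S(\tau-s)\mc N(\Phi(s))\,ds$ for the rescaled initial perturbation $\mb u$, correct it à la Lyapunov--Perron by replacing $\mb u$ with $\mb u-\mb C(\mb u,\Phi)$, where $\mb C(\mb u,\Phi):=\mb P\mb u+\int_0^\infty e^{-s}\mb P\mc N(\Phi(s))\,ds$, and run the Banach fixed point theorem on $\{\Phi\in C([0,\infty),\mc H):\sup_{\tau\ge 0}e^{(|\omega_p|-\varepsilon)\tau}\|\Phi(\tau)\|_{\mc H}<\infty\}$ to obtain, for $\|\mb u\|_{\mc H}$ small, a unique exponentially decaying solution $\Phi=\Phi(\mb u)$ of the corrected equation. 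Since the initial perturbation, hence $\mb u=\mb u(T)$, depends on the blow-up time $T$ through $\psi^T$ and the rescaling, and $\mb P$ projects precisely onto the symmetry direction $\partial_T\psi^T$, the scalar equation $\mb C(\mb u(T),\Phi(\mb u(T)))=0$ can be solved for a unique $T=T(f,g)$ near $1$ by an intermediate-value / implicit-function argument (the map $T\mapsto\mb P\mb u(T)$ being nondegenerate to leading order); for this $T$ the corrected equation coincides with the original one, undoing the change of variables yields precisely the claimed estimate, and uniqueness of $\psi$ in $\mc C_T$ follows from local well-posedness.
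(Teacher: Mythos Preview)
Your outline is correct and matches the paper's strategy step for step: similarity coordinates, first-order reformulation in an $L^2$-based energy space, Lumer--Phillips for the free part, compact perturbation, hypergeometric analysis of the point spectrum, Riesz projection onto the symmetry mode, Gearhart--Pr\"uss for the decay on the stable subspace, Lyapunov--Perron fixed point for the modified Duhamel equation, and an intermediate-value argument in $T$ to kill the correction.

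Two small technical remarks where the paper differs from your sketch. First, the threshold $\tfrac12-\tfrac{2}{p-1}$ for $p>\tfrac73$ is not an eigenvalue of $\mc L$ but the growth bound inherited from the free semigroup $S_0$; the paper never locates a spectral point there, and the resolvent bound on $\{\mathrm{Re}\,\lambda=\omega_p+\varepsilon\}$ is obtained not from hypergeometric asymptotics but from the Neumann-series identity $R_{\mc L}(\lambda)=R_{\mc L_0}(\lambda)[1-L'R_{\mc L_0}(\lambda)]^{-1}$, using that $L'$ contains an extra antiderivative which gains a factor $|\lambda|^{-1}$. Second, for the nonlinear estimate the paper does not invoke an $L^6$ Sobolev embedding but rather the elementary radial pointwise bound $|\rho^{-1/2}\!\int_0^\rho u_2|\le\|u_2\|_{L^2}$ (Cauchy--Schwarz) together with Hardy's inequality $\|\rho^{-1}\!\int_0^\rho u_2\|_{L^2}\lesssim\|u_2\|_{L^2}$; this yields a genuinely quadratic Lipschitz bound $\|\mb N(\mb u)-\mb N(\mb v)\|\lesssim(\|\mb u\|+\|\mb v\|)\|\mb u-\mb v\|$ for all $1<p\le 3$ (no H\"older exponent $p-1$ is needed, because the nonlinearity is centered at the strictly positive constant $\kappa_0^{1/(p-1)}$ and is therefore smooth near zero). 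Your $L^6$ route also works, but the paper's argument is more direct in the chosen variables.
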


\section{Transformation to a first order system and similarity coordinates}

\subsection{First--order formulation}
By setting $\tilde{\varphi}(t,r):=r\varphi(t,r)$, Eq.~\eqref{eq:pwavecauchy} transforms into
\begin{equation}
\label{eq_pwavetilde}
\left \{ \begin{array}{l}
\tilde{\varphi}_{tt}-\tilde{\varphi}_{rr}-p(\psi^T)^{p-1} \tilde{\varphi} - rN_T(\frac{\tilde{\varphi}}{r})=0 \mbox{ in } \mc{C}_T \\
\tilde{\varphi}(0,r)=r[f(r)-\psi^T(0,r)], \tilde{\varphi}_t(0,r)=r[g(r)-\psi^T_t(0,r)] \mbox{ for }r \in [0,T]
         \end{array} \right .
\end{equation}
together with the boundary condition $\tilde{\varphi}(t,0)=0$ for all $t$.
We rewrite Eq.~ (\ref{eq_pwavetilde}) as a first--order system by introducing new variables
$$\varphi_1=(T-t)^{\frac{2}{p-1}} \tilde{\varphi}_t, \quad \varphi_2=(T-t)^{\frac{2}{p-1}} \tilde{\varphi}_r .$$
Thanks to the boundary condition we have $\tilde{\varphi}(t,r) = (T-t)^{-\frac{2}{p-1}} \int_0^r \varphi_2(t,r')dr'$.
Note that the nonlinearity transforms according to
$$N_T \left((T-t)^{-\frac{2}{p-1}}  r^{-1}\smallint \varphi_2 \right) = (T-t)^{-\frac{2p}{p-1}}  N(r^{-1} \smallint \varphi_2)$$
where $\int \varphi_2$ is shorthand for $\int_0^r \varphi_2(t,r')dr'$ and
\begin{equation}
\label{eq:N}
N(x)=|\kappa_0^{\frac{1}{p-1}}+x |^{p-1}  (\kappa_0^{\frac{1}{p-1}}+x )-\kappa_0^{\frac{p}{p-1}} - p \kappa_0 x.
\end{equation}
Thus, Eq.~ (\ref{eq_pwavetilde}) reads
\begin{equation*}
\left \{ \begin{array}{l}
\left .\begin{array}{l}
\partial_t \varphi_1= \partial_r \varphi_2 - \frac{2}{p-1} (T-t)^{-1}  \varphi_1 + p \kappa_0  (T-t)^{-2}\int  \varphi_2 + r (T-t)^{-2} N(r^{-1}\int  \varphi_2)
 \\
\partial_t \varphi_2= \partial_r \varphi_1 - \frac{2}{p-1} (T-t)^{-1}  \varphi_2 \end{array} \right \}
\mbox{ in } \mc{C}_T \\
\left. \begin{array}{l}
\varphi_1(0,r)=T^{\frac{2}{p-1}} r\left [g(r)-\psi^T_t(0,r) \right ] \\
\varphi_2(0,r)= T^{\frac{2}{p-1}} [rf'(r)+f(r) -\psi^T(0,r)]
\end{array} \right \}
\mbox{ for } r \in [0,T]
\end{array} \right .
\end{equation*} 

\subsection{Similarity coordinates}
We transform the system to similarity coordinates $(\tau,\rho)$, which are given by
$$\tau=-\log(T-t), \quad \rho=\frac{r}{T-t}.$$ 
The cone $\mc{C}_T$ gets mapped to the infinite cylinder $ \mc{Z}_T:=\{(\tau,\rho): \tau>-\log T, \rho \in [0,1]\}$ and by
setting
$$\phi_j(\tau,\rho) := \varphi_j(T-e^{-\tau},e^{-\tau} \rho)$$
for $j=1,2$ we obtain

\begin{equation}\label{eq:nonlinear_firstorder_css}
\left \{ \begin{array}{l}
\left. \begin{array}{l}
\partial_\tau \phi_1=-\rho \partial_\rho \phi_1 + \partial_\rho  \phi_2 - \frac{2}{p-1} \phi_1 + p\kappa_0  \int_0^\rho \phi_2(\tau,s)ds + \rho N(\rho^{-1} \int_0^\rho \phi_2(\tau,s)ds)
 \\
\partial_\tau \phi_2= -\rho \partial_\rho \phi_2 + \partial_\rho  \phi_1 - \frac{2}{p-1} \phi_2 \end{array} \right \}
\mbox{ in }\mc{Z}_T \\
\left. \begin{array}{l}
\phi_1(-\log T,\rho)= \rho \left [T^{\frac{p+1}{p-1}}g(T\rho)-\tfrac{2}{p-1}\kappa_0^{\frac{1}{p-1}} \right ]\\
\phi_2(-\log T,\rho)=T^{\frac{2}{p-1}}\left [ T \rho f'(T\rho)+f(T\rho) \right ] -  \kappa_0^{\frac{1}{p-1}}
\end{array} \right \}
\mbox{ for } \rho \in [0,1].
\end{array} \right .
\end{equation} 

Tracing back the above transformations, the original field $\psi$ as well as its time-derivative can be reconstructed according to
\begin{equation}\label{Eq:ReconstructField}
\begin{aligned}
\psi(t,r) &= \psi^T(t,r) + (T-t)^{-\frac{2}{p-1}} r^{-1} \int_0^r \phi_2(-\log(T-t),\tfrac{r'}{T-t}) dr', \\
\psi_t(t,r)&= \psi_t^T(t,r) + (T-t)^{-\frac{2}{p-1}} r^{-1} \phi_1(-\log(T-t),\tfrac{r}{T-t}).
\end{aligned}
\end{equation}

Note that most of the expressions we are going to define below will depend on $p$. However, for the sake of readability we will not indicate this dependence explicitly, but consider $p$ to be fixed, where we restrict ourselves to $1<p\leq 3$.
\section{Linear Perturbation Theory}

In this section we consider the linearized problem
\begin{equation}\label{eq:linCSS}
\left \{ \begin{array}{l}
\left. \begin{array}{l}
\partial_\tau \phi_1=-\rho \partial_\rho \phi_1 + \partial_\rho  \phi_2 - \frac{2}{p-1} \phi_1 + p\kappa_0  \int_0^\rho \phi_2(\tau,s)ds 
 \\
\partial_\tau \phi_2= -\rho \partial_\rho \phi_2 + \partial_\rho  \phi_1 - \frac{2}{p-1} \phi_2 \end{array} \right \}
\mbox{ in }\mc{Z}_T \\
\left. \begin{array}{l}
\phi_1(-\log T,\rho)= \rho \left [T^{\frac{p+1}{p-1}}g(T\rho)-\tfrac{2}{p-1}\kappa_0^{\frac{1}{p-1}} \right ]\\
\phi_2(-\log T,\rho)=T^{2/(p-1)}\left [T \rho f'(T\rho)+f(T\rho) \right ] -  \kappa_0^{\frac{1}{p-1}}
\end{array} \right \}
\mbox{ for } \rho \in [0,1]
\end{array} \right .
\end{equation} 
which has already been studied in \cite{roland1}. 
Nevertheless, in order to present a consistent picture we summarize known results and supplement 
them by some new aspects which will be important for the nonlinear theory (see Lemma \ref{lem:algebraic_multi}).

\subsection{Well-posedness of the linearized equation}

Let $\mc H := L^2(0,1) \times L^2(0,1)$ with the usual inner product. We define operators $(\tilde L_0, \mc D(\tilde L_0))$ and $L' \in \mathcal{B}(\mathcal{H})$ by
$$\mc D(\tilde L_0):=\{\mb{u} \in C^1[0,1] \times C^1[0,1]: u_1(0)=0\}, $$ 
$$ \tilde L_0\mb{u}(\rho):=\left ( \begin{array}{c}u_2'(\rho)-\rho u_1'(\rho)-\frac{2}{p-1} u_1(\rho)  \\ u_1'(\rho)-\rho u_2'(\rho)-\frac{2}{p-1} u_2(\rho) \end{array}  \right)  $$
and $$ L' \mathbf{u}(\rho):=\left ( \begin{array}{c} p\kappa_0 \int_0^\rho u_2(s)ds 
\\ 0 \end{array} \right )$$
where $\mb u = (u_1,u_2)^T$. It is easy to see that $L'$ is a compact operator, which will play an important role later on.

\begin{lemma}
\label{L0_semigroup}
The operator $\tilde{L}_0$ is closable and its closure $L_0$ generates a
strongly continuous one--parameter semigroup $S_0: [0,\infty) \to \mathcal{B}(\mathcal{H})$
satisfying $\|S_0(\tau)\|\leq e^{\tilde \omega_p\tau}$ for all $\tau \geq 0$ and $\tilde \omega_p:= \frac{1}{2}-\frac{2}{p-1}$.
\end{lemma}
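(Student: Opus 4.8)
The plan is to apply the Lumer–Phillips theorem to a suitably renormalized version of $\tilde L_0$, so the task splits into three pieces: (i) identify the "free streaming" part and the zeroth-order part, (ii) establish a dissipativity (accretivity) estimate after shifting by the spectral parameter $\tilde\omega_p$, and (iii) verify the range condition, i.e. that $\lambda-L_0$ is surjective for some $\lambda>\tilde\omega_p$. First I would diagonalize the principal symbol: the matrix $\begin{psmallmatrix} -\rho & 1 \\ 1 & -\rho\end{psmallmatrix}$ acting on $(u_1',u_2')$ has eigenvalues $1-\rho$ and $-1-\rho$, so in the characteristic variables $v_\pm := u_1\pm u_2$ the operator decouples into two transport operators $\mp\partial_\rho - \rho\partial_\rho$ (up to the lower-order $-\tfrac{2}{p-1}$ term), which are the generators familiar from similarity-coordinate analysis of the $1+1$ wave equation on $[0,1]$. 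The boundary $\rho=1$ is the light cone and is characteristic-outgoing for both families, so no boundary condition is needed there; the condition $u_1(0)=0$ is exactly what is needed to make the problematic incoming characteristic at $\rho=0$ well-defined. I would record these reductions but not belabor them.

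For the dissipativity estimate I would compute $\mathrm{Re}\,\langle \tilde L_0 \mathbf u,\mathbf u\rangle_{\mathcal H}$ for $\mathbf u\in\mathcal D(\tilde L_0)$. Integrating by parts in the transport terms produces a boundary contribution at $\rho=1$ of the form $-\tfrac12(u_1(1)^2+u_2(1)^2)\le 0$ (using $u_2'u_1-\rho u_1'u_1$ etc.\ and reassembling), a boundary term at $\rho=0$ which vanishes because $u_1(0)=0$, and a bulk term. Collecting the bulk contributions, the $\rho\partial_\rho$ pieces contribute $+\tfrac12\|\mathbf u\|^2$ after integration by parts (from differentiating the weight $\rho$), while the $-\tfrac{2}{p-1}$ terms contribute $-\tfrac{2}{p-1}\|\mathbf u\|^2$; the cross terms $u_2'u_1+u_1'u_2=(u_1u_2)'$ integrate to a boundary term at $\rho=1$ that can be absorbed into the negative boundary contribution. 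The net is $\mathrm{Re}\,\langle \tilde L_0\mathbf u,\mathbf u\rangle\le \bigl(\tfrac12-\tfrac{2}{p-1}\bigr)\|\mathbf u\|^2=\tilde\omega_p\|\mathbf u\|^2$, i.e.\ $\tilde L_0-\tilde\omega_p$ is dissipative. The same computation for the formal adjoint (with the natural adjoint boundary conditions, now at $\rho=1$) gives dissipativity of $(\tilde L_0)^*-\tilde\omega_p$, which yields closability and, together with dissipativity, lets one conclude via the Lumer–Phillips theorem once surjectivity is in hand.

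For surjectivity I would fix $\lambda>\tilde\omega_p$ and solve $(\lambda-\tilde L_0)\mathbf u=\mathbf f$ explicitly. In the characteristic variables $v_\pm$ this is a pair of linear first-order ODEs on $(0,1)$ of the form $(1\mp\rho)$-weighted transport; integrating along characteristics from $\rho=0$ (using the boundary condition $u_1(0)=0$, equivalently $v_+(0)+v_-(0)=0$, to fix the free constant) one writes the solution as an explicit integral operator applied to $\mathbf f$. The point $\rho=1$ is a regular-singular point of the ODE, and one must check that the integral representation produces an $L^2(0,1)$ solution for $\mathbf f\in L^2$ — this is where the condition $\lambda>\tilde\omega_p=\tfrac12-\tfrac{2}{p-1}$ enters, guaranteeing an integrable power-type singularity of the Green's kernel at $\rho=1$. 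That closed range/surjectivity verification near the singular endpoint is the main obstacle; everything else is bookkeeping. Once $(\lambda-L_0)$ is onto for one such $\lambda$, Lumer–Phillips gives the generation statement with growth bound $\|S_0(\tau)\|\le e^{\tilde\omega_p\tau}$. Since this lemma is already established in \cite{roland1}, I would in practice cite it and only sketch the above for the reader's orientation.
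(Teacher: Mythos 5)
Your proposal follows essentially the same route as the paper: the Lumer--Phillips theorem, with the dissipativity estimate $\mathrm{Re}(\tilde L_0\mathbf u|\mathbf u)\leq(\tfrac12-\tfrac{2}{p-1})\|\mathbf u\|^2$ obtained by integration by parts (your boundary bookkeeping at $\rho=0$ and $\rho=1$ is correct) and the range condition handled by the explicit resolvent computation that the paper likewise takes from \cite{roland1}. The only loose point is that the second free constant of the characteristic ODE system is fixed by the $L^2$/regularity requirement at the singular endpoint $\rho=1$ rather than by $u_1(0)=0$ alone, but you correctly identify this endpoint analysis as the crux, and it is exactly the step both you and the paper defer to \cite{roland1}.
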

\begin{proof}
The claim is a consequence of the Lumer-Phillips Theorem (see \cite{engel}, p.~83, Theorem 3.15).
Indeed, a simple integration by parts yields the estimate 
$$\mathrm{Re}(\tilde{L}_0 \mathbf{u}|\mathbf{u})\leq
\left(\tfrac{1}{2}-\tfrac{2}{p-1} \right) \|\mathbf{u}\|^2 $$
and $\tfrac{1}{2}-\tfrac{2}{p-1}<0$.
Furthermore, for $\lambda:=1-\frac{2}{p-1} > \tilde \omega_p$ the range of 
$\lambda-\tilde{L}_0$ is dense in $\mathcal{H}$. This follows from the very same calculation as in 
the proof of Lemma 2 in \cite{roland1}. Since $\tilde L_0$ is densely defined, the 
Lumer-Phillips Theorem applies.
\end{proof}

\begin{corollary}
\label{Cor_specL0}
The spectrum of $L_0$ is contained in a left half plane, 
$$ \sigma(L_0)\subset \left \{\lambda \in \mathbb{C}: Re \lambda \leq \tilde \omega_p \right \}, $$
with $\tilde \omega_p= \frac{1}{2}-\frac{2}{p-1}$ 
and the resolvent of $L_0$ satisfies 
$$ \|R_{L_0}(\lambda)\|\leq \frac{1}{Re \lambda-\tilde \omega_p} $$
for all $\lambda \in \mathbb{C}$ with $Re \lambda>\tilde \omega_p$.
\end{corollary}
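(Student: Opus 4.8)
The plan is to read off both assertions directly from Lemma \ref{L0_semigroup} via the standard correspondence between a strongly continuous semigroup and the resolvent of its generator. First I would recall that for a $C_0$-semigroup $S_0$ with growth bound $\|S_0(\tau)\| \leq e^{\tilde\omega_p \tau}$, every $\lambda \in \mathbb{C}$ with $\mathrm{Re}\,\lambda > \tilde\omega_p$ lies in the resolvent set of the generator $L_0$ and the resolvent is represented by the Laplace transform of the semigroup,
\[ R_{L_0}(\lambda)\mathbf{u} = \int_0^\infty e^{-\lambda \tau} S_0(\tau)\mathbf{u}\, d\tau, \qquad \mathbf{u} \in \mathcal{H}, \]
see, e.g., \cite{engel}. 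This integral converges absolutely in $\mathcal H$ because $\|e^{-\lambda\tau} S_0(\tau)\| \leq e^{-(\mathrm{Re}\,\lambda - \tilde\omega_p)\tau}$, and the very same bound gives
\[ \|R_{L_0}(\lambda)\| \leq \int_0^\infty e^{-(\mathrm{Re}\,\lambda - \tilde\omega_p)\tau}\, d\tau = \frac{1}{\mathrm{Re}\,\lambda - \tilde\omega_p}, \]
which is the claimed resolvent estimate. Since every $\lambda$ with $\mathrm{Re}\,\lambda > \tilde\omega_p$ belongs to the resolvent set, the spectrum $\sigma(L_0)$ must be contained in the complementary closed half-plane $\{\lambda \in \mathbb{C} : \mathrm{Re}\,\lambda \leq \tilde\omega_p\}$.

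There is essentially no obstacle here: the statement is a textbook consequence of Lemma \ref{L0_semigroup} once the Laplace-transform formula for the resolvent is invoked. The only point that warrants a line of justification is the Banach-space-valued (Bochner) integral defining $R_{L_0}(\lambda)$, which is immediate from the strong continuity of $\tau \mapsto S_0(\tau)\mathbf u$ together with the uniform exponential bound. Alternatively, if one prefers to avoid quoting the general theory, I would verify by hand that for $\mathrm{Re}\,\lambda > \tilde\omega_p$ the operator given by the right-hand side above is a bounded two-sided inverse of $\lambda - L_0$ — checking the algebraic identities on the core $\mc D(\tilde L_0)$ and extending by density and closedness of $L_0$ — which simultaneously yields membership in the resolvent set and the norm bound.
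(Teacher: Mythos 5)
Your proposal is correct and is essentially the paper's argument: the paper simply cites the standard semigroup result (Engel--Nagel, Theorem 1.10), whose content is exactly the Laplace-transform representation of the resolvent and the ensuing bound $\|R_{L_0}(\lambda)\|\leq (\mathrm{Re}\,\lambda-\tilde\omega_p)^{-1}$ that you spell out. No issues.
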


\begin{proof}
The structure of the spectrum as well as the resolvent estimate follow by standard results of semigroup 
theory (see \cite{engel}, p.~55, Theorem 1.10).
\end{proof}

The next corollary is a consequence of the Bounded Perturbation Theorem (see \cite{engel}, p.~158).

\begin{corollary}[well-posedness of the linearized equation]
\label{cor:gen}
 The operator $L:=L_0+L'$, $\mc D(L):=\mc D(L_0)$ generates a strongly continuous one--parameter semigroup $S: [0,\infty) \to \mc{B}(\mc{H})$ satisfying
 $$ \|S(\tau)\|\leq e^{(\tilde \omega_p+p\kappa_0)\tau} $$
 for all $\tau \geq 0$ and $\tilde \omega_p= \frac{1}{2}-\frac{2}{p-1}$.
 In particular, the Cauchy problem
\begin{equation*}
 \left \{ \begin{array}{l}
\frac{d}{d\tau}\Phi(\tau)=L \Phi(\tau) \mbox{ for }\tau>-\log T \\
\Phi(-\log T)=\mb{u}
          \end{array} \right .
\end{equation*}
has a unique solution given by
$$ \Phi(\tau)=S(\tau+\log T)\mb{u} $$
for $\mb{u} \in \mc{D}(L_0)$ and all $\tau \geq -\log T$.
\end{corollary}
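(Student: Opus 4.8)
The statement to prove is Corollary \ref{cor:gen}, the well-posedness of the linearized equation via the Bounded Perturbation Theorem.

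The plan is to apply the Bounded Perturbation Theorem (as cited from Engel-Nagel, p.~158) directly. I have already established in Lemma \ref{L0_semigroup} that $L_0$ generates a strongly continuous semigroup $S_0$ with $\|S_0(\tau)\| \leq e^{\tilde\omega_p \tau}$, and I have observed in the preamble that $L' \in \mc{B}(\mc{H})$ is a bounded (in fact compact) operator. Since $\mc{D}(L) := \mc{D}(L_0) = \mc{D}(L_0+L')$ (addition of a bounded operator does not change the domain), the hypotheses of the Bounded Perturbation Theorem are met: the sum $L = L_0 + L'$ of a generator and a bounded operator again generates a strongly continuous semigroup $S$.

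For the growth bound, I would use the standard estimate from the Bounded Perturbation Theorem, which gives $\|S(\tau)\| \leq M e^{(\omega + M\|L'\|)\tau}$ when $\|S_0(\tau)\| \leq M e^{\omega\tau}$. Here $M=1$ and $\omega = \tilde\omega_p$, so I need $\|L'\| \leq p\kappa_0$. This follows from a direct computation: for $\mb{u} = (u_1,u_2)^T \in \mc{H}$ we have $L'\mb{u} = (p\kappa_0 \int_0^\rho u_2(s)\,ds, 0)^T$, and by Cauchy-Schwarz $\bigl|\int_0^\rho u_2(s)\,ds\bigr|^2 \leq \rho \int_0^\rho |u_2(s)|^2\,ds \leq \int_0^1 |u_2(s)|^2\,ds = \|u_2\|_{L^2(0,1)}^2$ for every $\rho \in [0,1]$, whence $\|L'\mb{u}\|_{\mc{H}}^2 = p^2\kappa_0^2 \int_0^1 \bigl|\int_0^\rho u_2(s)\,ds\bigr|^2 d\rho \leq p^2\kappa_0^2 \|u_2\|_{L^2(0,1)}^2 \leq p^2\kappa_0^2 \|\mb{u}\|_{\mc{H}}^2$. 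Hence $\|L'\| \leq p\kappa_0$ and the claimed bound $\|S(\tau)\| \leq e^{(\tilde\omega_p + p\kappa_0)\tau}$ follows.

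Finally, the statement about the Cauchy problem is the abstract characterization of solutions of $\frac{d}{d\tau}\Phi = L\Phi$ with a generator: for $\mb{u} \in \mc{D}(L)$ the orbit map $\tau \mapsto S(\tau)\mb{u}$ is the unique classical solution of the abstract Cauchy problem with initial value $\mb{u}$, and a shift by $\log T$ accounts for the initial time $-\log T$ rather than $0$. There is no real obstacle here; the only point requiring a line of work is the norm estimate $\|L'\| \leq p\kappa_0$, which is the elementary Cauchy-Schwarz argument above. Everything else is a direct invocation of standard semigroup theory.
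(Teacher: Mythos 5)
Your proposal is correct and follows the same route as the paper, which simply invokes the Bounded Perturbation Theorem with the generation bound of Lemma \ref{L0_semigroup}; your explicit Cauchy--Schwarz estimate $\|L'\|\leq p\kappa_0$ just fills in a detail the paper leaves implicit. No gaps.
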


\subsection{Properties of the generator}

We obtain a more explicit characterization of $\mc{D}(L)$ in order to be able to describe
the spectrum of $L$.

\begin{lemma}
\label{Lemma:closure}
Let $\mb u \in \mc D(L)$. Then $\mb u \in C[0,1) \times C[0,1)$ and $u_1(0)=0$. Furthermore, 
for $\mb f \in \mc H$ the equation $(\lambda - L) \mb u = \mb f$ implies 
$$u_1(\rho)= \rho u_2(\rho)+ (\lambda  + \tfrac{2}{p-1} -1  )\int_0^\rho u_2(s)ds -\int_0^\rho f_2(s)ds $$
and 
\begin{equation}
\label{Eq:inhom_eigenval}
\begin{aligned} 
-(1-\rho^2) & u''(\rho)+2\left(\lambda + \tfrac{2}{p-1} \right) \rho u'(\rho) + \left(\left(\lambda+ \tfrac{2}{p-1} \right)\left(\lambda + \tfrac{2}{p-1} - 1\right) - p\kappa_0 \right) u(\rho) \\ & = f_1(\rho) + \rho f_2(\rho) + (\lambda + \tfrac{2}{p-1}) \int_0^\rho f_2(s) ds
\end{aligned}
\end{equation}
in a weak sense, where $u \in H^2_\mathrm{loc}(0,1)\cap C[0,1] \cap C^1[0,1)$ is defined by $u(\rho) := \int_0^\rho u_2(s)ds$.
\end{lemma}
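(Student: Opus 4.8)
The strategy is to unwind the definition of the closure $L_0 = \overline{\tilde L_0}$ and the bounded perturbation $L' $ directly. First I would establish the regularity claim. Let $\mb u \in \mc D(L) = \mc D(L_0)$. By definition of the closure, there is a sequence $\mb u^{(n)} \in \mc D(\tilde L_0)$ with $\mb u^{(n)} \to \mb u$ and $\tilde L_0 \mb u^{(n)} \to L_0 \mb u$ in $\mc H$. Adding $\tfrac{2}{p-1}\mb u^{(n)}$ and examining the two components of $\tilde L_0$ one sees that the combinations $u_2' - \rho u_1'$ and $u_1' - \rho u_2'$ converge in $L^2(0,1)$; since the matrix $\left(\begin{smallmatrix} -\rho & 1 \\ 1 & -\rho \end{smallmatrix}\right)$ is invertible on every compact subinterval of $[0,1)$ (its determinant is $\rho^2 - 1 \neq 0$ there), the derivatives $(u_1^{(n)})'$ and $(u_2^{(n)})'$ converge in $L^2_{\mathrm{loc}}[0,1)$. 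Hence $\mb u \in H^1_{\mathrm{loc}}[0,1) \times H^1_{\mathrm{loc}}[0,1) \hookrightarrow C[0,1) \times C[0,1)$, and the boundary condition $u_1^{(n)}(0) = 0$ passes to the limit by the trace/Sobolev embedding on $[0,\delta]$, giving $u_1(0) = 0$.

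Next I would derive the formula for $u_1$ from the equation $(\lambda - L)\mb u = \mb f$, i.e. $(\lambda - L_0 - L')\mb u = \mb f$. Written out, the second component reads $\lambda u_2 - u_1' + \rho u_2' + \tfrac{2}{p-1} u_2 = f_2$ (the operator $L'$ has vanishing second component), which I would rearrange as $u_1' = \rho u_2' + (\lambda + \tfrac{2}{p-1}) u_2 - f_2 = (\rho u_2)' + (\lambda + \tfrac{2}{p-1} - 1) u_2 - f_2$. Integrating from $0$ to $\rho$ and using $u_1(0) = 0$ (and $\rho u_2 \to 0$ as $\rho \to 0$, which holds since $u_2 \in C[0,1)$) yields exactly the stated expression for $u_1(\rho)$. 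This step is essentially an integration and uses only the regularity established above together with $u_2 \in L^2(0,1)$, so that $\int_0^\rho u_2$ makes sense and defines $u(\rho)$, which lies in $C[0,1]$ (Cauchy–Schwarz) and in $C^1[0,1)$ with $u' = u_2$ a.e., and indeed $u \in H^2_{\mathrm{loc}}(0,1)$ once we have the second-order equation.

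Finally, to obtain \eqref{Eq:inhom_eigenval} I would substitute the formula for $u_1$ into the \emph{first} component of $(\lambda - L)\mb u = \mb f$, namely $\lambda u_1 - u_2' + \rho u_1' + \tfrac{2}{p-1} u_1 - p\kappa_0 \int_0^\rho u_2 = f_1$. Using $u' = u_2$, $\int_0^\rho u_2 = u$, and the already-derived identity $u_1 = \rho u' + (\lambda + \tfrac{2}{p-1} - 1) u - \int_0^\rho f_2$, I would differentiate where needed (all differentiations are legitimate in $H^1_{\mathrm{loc}}$, hence the equation holds weakly on $(0,1)$), collect the terms in $u''$, $u'$, and $u$, and check that the coefficients match: the $u''$ coefficient becomes $-(1-\rho^2)$ after combining $\rho \cdot (\rho u')' $-type contributions with $-u_2' = -u''$, the $u'$ coefficient assembles to $2(\lambda + \tfrac{2}{p-1})\rho$, and the $u$ coefficient to $(\lambda + \tfrac{2}{p-1})(\lambda + \tfrac{2}{p-1} - 1) - p\kappa_0$, while the inhomogeneity collects to $f_1 + \rho f_2 + (\lambda + \tfrac{2}{p-1})\int_0^\rho f_2$. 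The main obstacle is purely bookkeeping: one must be careful that several differentiations land on the integral terms $\int_0^\rho u_2$ and $\int_0^\rho f_2$, and that the boundary-term contributions from these integrations genuinely vanish (they do, since $u, \int_0^\rho f_2 \to 0$ as $\rho \to 0$); once the algebra is organized correctly the identity \eqref{Eq:inhom_eigenval} drops out, and the $H^2_{\mathrm{loc}}(0,1)$ regularity of $u$ is then a bootstrap consequence of that equation since $1-\rho^2$ is bounded below on compacta of $(0,1)$.
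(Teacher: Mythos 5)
Your proposal is correct and follows essentially the same route as the paper: approximate by a sequence in $\mc D(\tilde L_0)$ and invert the principal part (the paper phrases this as convergence of $(1-\rho^2)u_{1j}'$ and $(1-\rho^2)u_{2j}'$, which is your matrix inversion) to get local $H^1$ regularity and the boundary condition, then solve the second component for $u_1$ and substitute into the first to obtain \eqref{Eq:inhom_eigenval} in terms of $u=\int_0^\rho u_2$. The algebra you outline indeed reproduces the stated coefficients, so no gap.
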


\begin{proof}
Let $\mb u \in \mc D(L) = \mc D(L_0)$. By definition there exists a sequence 
$(\mb u_j) \subset \mc D(\tilde L_0) \subset C^1[0,1] \times C^1[0,1]$ such that 
$\mb u_j \to \mb u$ and $\tilde L_0 \mb u_j \to L_0 \mb u$ in $\mc H$. By combining the expressions for the individual components in an appropriate way we infer that $(1-\rho^2)u'_{1j}$ and $(1-\rho^2)u'_{2j}$ are convergent sequences in $L^2(0,1)$. Thus $u_1,u_2 \in H^1(0,1-\varepsilon) \hookrightarrow C[0,1-\varepsilon]$ for any $\varepsilon \in (0,1)$. This guarantees the boundary condition $u_1(0)=0$.

Let $\mb f \in \mc H$ and $\lambda \in \C$. Then $(\lambda - L) \mb u = \mb f$ implies 
\begin{align*}
(\lambda + \tfrac{2}{p-1} ) u_1(\rho) + \rho u_1'(\rho) - u_2'(\rho) - p \kappa_0 \smallint_0^\rho u_2(s)ds= f_1(\rho) \\
(\lambda + \tfrac{2}{p-1} ) u_2(\rho) + \rho u_2'(\rho) - u_1'(\rho) = f_2(\rho)
\end{align*}
in a weak sense. Thanks to the boundary condition we obtain from the second equation that
$$u_1(\rho)= \rho u_2(\rho)+ (\lambda  + \tfrac{2}{p-1} -1  )\int_0^\rho u_2(s)ds -\int_0^\rho f_2(s)ds. $$
Inserting this into the first equation yields
\begin{align*} 
-(1-\rho^2) & u_2'(\rho)+2\left(\lambda + \tfrac{2}{p-1} \right) \rho u_2(\rho) + \left(\left(\lambda+ \tfrac{2}{p-1} \right)\left(\lambda + \tfrac{2}{p-1} - 1\right) - p\kappa_0 \right) \smallint_0^{\rho} u_2(s) d s \\ & = f_1(\rho) + \rho f_2(\rho) +
\left(\lambda+ \tfrac{2}{p-1} \right)  \smallint_0^{\rho} f_2(s) d s.
\end{align*}
We set $u(\rho) := \int_0^\rho u_2(s)ds$ and obtain Eq.~ (\ref{Eq:inhom_eigenval}). 
Finally, $u_2 \in L^2(0,1)$ implies $u \in H^1(0,1) \hookrightarrow C[0,1]$ and $\mb u \in \mc D(L)$ yields 
$u \in H^2_\mathrm{loc}(0,1)\cap C^1[0,1)$.
\end{proof}

In order to improve the rough growth estimate given in Corollary \ref{cor:gen}, we analyse 
the spectrum of the generator. 
The next two Lemmas characterize the spectral properties of the generator $L$ sufficiently accurate.

\begin{lemma}
\label{lemma:spect_L}
For the spectrum $\sigma(L)$ of the generator $L$ we have
$$\sigma(L) \subset \left\{\lambda \in \C: Re \lambda \leq \max \{\tilde \omega_p,-1 \} \right\} \cup \{1\}$$
where $\tilde \omega_p=\tfrac12 - \tfrac{2}{p-1}$. 
\end{lemma}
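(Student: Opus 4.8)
The strategy is to combine two facts: first, by Corollary~\ref{cor:gen} the operator $L=L_0+L'$ generates a strongly continuous semigroup, so its spectrum is contained in some right half-plane bound; and second, $L'$ is a compact perturbation of $L_0$, whose spectrum is known to lie in $\{\mathrm{Re}\,\lambda\le\tilde\omega_p\}$ by Corollary~\ref{Cor_specL0}. The plan is therefore to show that in the region $\mathrm{Re}\,\lambda>\max\{\tilde\omega_p,-1\}$ the resolvent $(\lambda-L)^{-1}$ exists and is bounded, \emph{except} for the single point $\lambda=1$. Since $L'$ is compact, the analytic Fredholm theorem (or the Weyl-type perturbation argument) tells us that in the half-plane $\mathrm{Re}\,\lambda>\tilde\omega_p$, where $\lambda-L_0$ is boundedly invertible, the operator $\lambda-L=(\lambda-L_0)(1-(\lambda-L_0)^{-1}(-L'))$ is invertible if and only if $1$ is not an eigenvalue of the compact operator $(\lambda-L_0)^{-1}L'$, i.e.\ if and only if $\lambda$ is not an eigenvalue of $L$. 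Thus, in that half-plane, $\sigma(L)$ consists purely of eigenvalues, and it suffices to locate them.

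\textbf{Reduction to an ODE eigenvalue problem.} To find the eigenvalues with $\mathrm{Re}\,\lambda>\tilde\omega_p$, I would use Lemma~\ref{Lemma:closure}: an eigenfunction $\mb u$ of $L$ with eigenvalue $\lambda$ corresponds (via $u(\rho)=\int_0^\rho u_2(s)\,ds$) to a solution $u\in H^2_{\mathrm{loc}}(0,1)\cap C[0,1]\cap C^1[0,1)$ of the homogeneous version of \eqref{Eq:inhom_eigenval}, namely
\begin{equation*}
-(1-\rho^2)u''(\rho)+2\left(\lambda+\tfrac{2}{p-1}\right)\rho u'(\rho)+\left(\left(\lambda+\tfrac{2}{p-1}\right)\left(\lambda+\tfrac{2}{p-1}-1\right)-p\kappa_0\right)u(\rho)=0,
\end{equation*}
with the boundary behaviour forcing $u(0)=0$ and demanding that $u$ extend to a $C[0,1]$ function (regularity at the singular endpoint $\rho=1$). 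This is a Jacobi-type / hypergeometric ODE with regular singular points at $\rho=\pm1$ and $\rho=\infty$. The indicial exponents at $\rho=0$ are $0$ and $1$ (choosing the exponent $1$ enforces $u(0)=0$), and at $\rho=1$ one computes the indicial exponents, one of which corresponds to a bounded solution and the other to a $\log$ or power singularity. The eigenvalue condition is then that the solution which is regular at $\rho=0$ is also the one regular at $\rho=1$; this is a connection problem for hypergeometric functions, solvable explicitly via the classical connection formulae for ${}_2F_1$. One finds that this happens only when a certain Gamma-function combination has a pole, which pins down a discrete set of $\lambda$. The key algebraic point is that the relation $\kappa_0=\frac{2(p+1)}{(p-1)^2}$ makes the quadratic in $\lambda$ factor nicely, and the only root with $\mathrm{Re}\,\lambda>\max\{\tilde\omega_p,-1\}$ turns out to be $\lambda=1$ (with eigenfunction reflecting the time-translation symmetry of $\psi^T$). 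All of this is essentially carried out in \cite{roland1}, so I would cite it for the explicit computation rather than redo it.

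\textbf{Assembling the statement.} Putting the pieces together: for $\mathrm{Re}\,\lambda>\max\{\tilde\omega_p,-1\}$ and $\lambda\ne1$, the homogeneous ODE has no admissible solution, hence $\lambda-L$ is injective; combined with the Fredholm alternative from compactness of $L'$ (valid for $\mathrm{Re}\,\lambda>\tilde\omega_p$; for $\max\{\tilde\omega_p,-1\}<\mathrm{Re}\,\lambda\le\tilde\omega_p$, which is nonempty only when $\tilde\omega_p>-1$, i.e.\ $p>3$, so in our range $1<p\le3$ this region is empty and there is nothing extra to check), this gives surjectivity and boundedness of the inverse. Therefore $\sigma(L)\subset\{\mathrm{Re}\,\lambda\le\max\{\tilde\omega_p,-1\}\}\cup\{1\}$, which is the claim. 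The main obstacle is the connection problem for the hypergeometric ODE — verifying that $\lambda=1$ is genuinely the \emph{only} eigenvalue in the relevant half-plane and that no eigenvalues hide near the line $\mathrm{Re}\,\lambda=-1$; but since the ODE is solved explicitly in terms of ${}_2F_1$ in \cite{roland1}, this reduces to reading off zeros of an explicit meromorphic function, so the analysis is in principle routine once the hypergeometric normal form is established.
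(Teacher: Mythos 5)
Your proposal is correct and follows essentially the same route as the paper: reduce to point spectrum for $\mathrm{Re}\,\lambda>\tilde\omega_p$ via compactness of $L'$ and the factorization of $\lambda-L$, pass to the second-order ODE through Lemma \ref{Lemma:closure}, and read off the admissible eigenvalues from the vanishing of the hypergeometric connection coefficient, leaving only $\lambda=1$ above $\max\{\tilde\omega_p,-1\}$. The paper carries out the ${}_2F_1$ computation explicitly (via $z=\rho^2$ with $a=\tfrac12(\lambda-2)$, $b=\tfrac12(\lambda+\tfrac{p+3}{p-1})$, $c=\tfrac12$) rather than citing \cite{roland1}; two harmless slips in your write-up are the sign in the factorization, which should read $\lambda-L=[1-L'R_{L_0}(\lambda)](\lambda-L_0)$, and the parenthetical about $\max\{\tilde\omega_p,-1\}<\mathrm{Re}\,\lambda\le\tilde\omega_p$ (that region is empty for every $p$ since $\max\{\tilde\omega_p,-1\}\ge\tilde\omega_p$, and in any case $\tilde\omega_p>-1$ already for $p>\tfrac73$, not $p>3$).
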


\begin{proof}
Set $M :=\left\{\lambda \in \C: Re \lambda \leq \max \{\tilde \omega_p,-1 \} \right\} \cup \{1\}$.
Let $\lambda \in \sigma(L)$. If $Re \lambda \leq \tilde \omega_p$ then $\lambda \in M$. 
So let us assume that $Re \lambda > \tilde \omega_p$. Then, by Corollary \ref{Cor_specL0}, 
$\lambda \in \sigma(L) \setminus \sigma(L_0)$ and the identity 
$\lambda - L = [1-L'R_{L_0}(\lambda)](\lambda - L_0)$ together with the spectral theorem for compact operators imply that 
$\lambda \in \sigma_p(L)$. Thus, there exists a nontrivial $\mb u \in \mc D(L)$ such that 
$(\lambda - L)\mb u = \mb 0$. By Lemma \ref{Lemma:closure} this implies the existence of a weak solution $u$
of Eq.~ (\ref{Eq:inhom_eigenval}) with right hand side equal to zero. Recall that $u \in H^1(0,1)$ and $u(0)=0$. 
We transform Eq.~ (\ref{Eq:inhom_eigenval}) by substituting $\rho \mapsto z:=\rho^2$ to obtain the hypergeometric 
equation (recall that $\kappa_0=\frac{2(p+1)}{(p-1)^2}$)
\begin{equation}
\label{eq_hypgeom}
z(1-z)v''(z)+[c-(a+b+1)z]v'(z)-abv(z)=0 
\end{equation} 
where $v(z):=u(\sqrt{z})$ and the parameters are given by 
$a= \frac12 (\lambda -2)$, $b=\frac12 (\lambda + \frac{p+3}{p-1})$, $c=\frac12$. 
For $\lambda \neq 1-\frac{2}{p-1}$ a fundamental system  around $z=1$ is given by $\{v_1,\tilde v_1\}$, 
$v_1(z)={}_2F_1(a, b; a+b+1-c;
1-z)$ and 
$\tilde{v}_1(z)=(1-z)^{c-a-b}{}_2F_1(c-a,c-b;c+1-a-b;1-z)$ where ${}_2F_1$ is the standard
hypergeometric function, see e.g., \cite{DLMF}.
The exponent $c-a-b=1-\frac{2}{p-1}-\lambda$ vanishes for $\lambda=1-\frac{2}{p-1}$ and 
in this case one solution is still given by $v_1$ and the second one diverges logarithmically for $z \to 1$. 
Since we assume $Re \lambda > \tfrac12 - \tfrac{2}{p-1}$, $v$ must be a multiple of 
$v_1$ for $u$ to be in $H^1(0,1)$. Around $z=0$ there is a fundamental system
given by $\{v_0,\tilde v_0\}$, where
$v_0(z):=z^{1-c}{}_2F_1(a+1-c,b+1-c; 2-c; z)$ and
$\tilde{v}_0(z):={}_2F_1(a,b;c;z)$. Thus, there exist constants
$c_1$, $c_2$ such that $v_1=c_1 \tilde{v}_0+c_2 v_0$. In order to satisfy 
the boundary condition $v(0)=0$, the coefficient $c_1$, which can be given in terms of the Gamma function
\cite{DLMF}
$$ c_1=\frac{\Gamma(a+b+1-c)\Gamma(1-c)}{\Gamma(a+1-c)\Gamma(b+1-c)},$$
must vanish. Consequently, $c_1=0$ if and only if $a+1-c$ or
$b+1-c$ is a pole, which yields $\frac12(\lambda -1)=-k$ or $\frac{\lambda}{2} + \frac{p+1}{p-1}=-k$
for a $k\in \mathbb{N}_0$. This implies that $\lambda$ is real and
$$\lambda \in \left \{\omega \in \R: \omega > \tilde \omega_p \wedge \left(\omega= 1-2k \vee \omega=-2k-\tfrac{2p+2}{p-1}\right), k=0,1,\dots \right \}.$$  
Since $1<p\leq 3$ we have $\tilde \omega_p \leq - \tfrac12$. If $\tilde \omega_p \geq {-1}$ then 
the only possibility is $\lambda=1$. If $\tilde \omega_p < {-1}$ then either $\lambda=1$ 
or $\lambda \leq -1$. In any case we conclude that $\lambda \in M$.
\end{proof}

\begin{lemma}
\label{Lemma:gauge_eigevalue}
The eigenvalue $1 \in \sigma_p(L)$ has geometric multiplicity equal to one.
The associated geometric eigenspace is spanned by 
\begin{equation}\label{Eq:gaugemode}
\mb{g}(\rho):=\left ( \begin{array}{c}
\frac{p+1}{p-1} \rho  \\ 1 \end{array} \right ).
\end{equation}
In the following $\mb g$ will be referred to as the symmetry mode.
\end{lemma}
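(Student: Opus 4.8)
The plan is to reduce the eigenvalue problem for $\lambda = 1$ to the same hypergeometric ODE analysis carried out in the proof of Lemma \ref{lemma:spect_L}, and then to exhibit explicitly a solution satisfying the boundary conditions and show it is unique up to scalars. Concretely, I would first verify directly that $\mb g$ given by \eqref{Eq:gaugemode} lies in $\mc D(L)$ (it is $C^1$ with $g_1(0)=0$) and satisfies $L\mb g = \mb g$; this is a routine check using the definition of $L_0$ and $L'$ and the value $\kappa_0 = \frac{2(p+1)}{(p-1)^2}$. The real content is the statement that the geometric eigenspace is one-dimensional.

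To prove one-dimensionality I would argue as in Lemma \ref{lemma:spect_L}: any eigenvector $\mb u \in \mc D(L)$ with $(1-L)\mb u = \mb 0$ gives, via Lemma \ref{Lemma:closure}, a weak solution $u = \int_0^\rho u_2$ of \eqref{Eq:inhom_eigenval} with zero right-hand side, lying in $H^1(0,1)$ with $u(0)=0$, and by the ODE theory $u \in H^2_{\mathrm{loc}}(0,1)$. Substituting $z = \rho^2$ turns this into the hypergeometric equation \eqref{eq_hypgeom} with $\lambda = 1$, i.e.\ $a = -\tfrac12$, $b = \tfrac12\bigl(1 + \tfrac{p+3}{p-1}\bigr) = \tfrac{p+1}{p-1}$, $c = \tfrac12$. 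The solution space is two-dimensional. The condition $u \in H^1$ near $\rho = 1$ (equivalently $z = 1$), together with $Re\,\lambda = 1 > \tilde\omega_p$, forces $v$ to be the regular branch $v_1(z) = {}_2F_1(a,b;a+b+1-c;1-z)$ up to a constant, exactly as in the earlier proof (note $\lambda = 1 \neq 1 - \tfrac{2}{p-1}$ since $p > 1$, so the logarithmic degeneracy does not occur). The boundary condition $u(0)=0$ then pins $v$ down uniquely up to a scalar: either $c_1 = 0$ and $v$ is a multiple of $v_1 = c_2 v_0$, where $v_0(z) = z^{1/2}{}_2F_1(a+\tfrac12,b+\tfrac12;\tfrac32;z)$ vanishes at $z=0$, or $c_1 \neq 0$ and no nonzero multiple of $v_1$ can vanish at $z=0$. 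In either case the solution set satisfying both boundary requirements is at most one-dimensional, so the geometric eigenspace has dimension at most one; since $\mb g$ is a nonzero element of it, the dimension is exactly one.

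It remains to identify that one solution with $\mb g$. With $\lambda = 1$ one has $a + 1 - c = -\tfrac12 + 1 - \tfrac12 = 0$, which is a nonpositive integer, so $a + 1 - c$ hits a pole of the Gamma function and indeed $c_1 = 0$; thus $v_1$ is proportional to $v_0(z) = z^{1/2}{}_2F_1(0,\ldots;\tfrac32;z) = z^{1/2}$ because the ${}_2F_1$ with a zero numerator parameter is identically $1$. Hence $u(\rho) = v(\rho^2)$ is proportional to $\rho$, so $u_2 = u' $ is constant; normalizing $u_2 \equiv 1$ gives $u(\rho) = \rho$, and then the formula for $u_1$ from Lemma \ref{Lemma:closure} with $\lambda = 1$ and $f_2 = 0$ yields $u_1(\rho) = \rho u_2(\rho) + \tfrac{2}{p-1}\int_0^\rho u_2 = \rho + \tfrac{2}{p-1}\rho = \tfrac{p+1}{p-1}\rho$, which is exactly $\mb g$.

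The main obstacle is not any single hard estimate but making sure the reduction to the hypergeometric ODE is airtight at the regular-singular endpoints: one must confirm that the $H^1$ membership genuinely excludes the non-$L^2$ branch at $z = 1$ for $\lambda = 1$ (this is inherited verbatim from Lemma \ref{lemma:spect_L}), and that the degenerate case $\lambda = 1 - \tfrac{2}{p-1}$ is irrelevant here. Everything else is bookkeeping with hypergeometric connection coefficients already recorded in the proof of Lemma \ref{lemma:spect_L}, so the argument is short. I would present it as: (i) $\mb g \in \mc D(L)$ and $L\mb g = \mb g$ by direct computation; (ii) any eigenfunction reduces to the regular hypergeometric branch by the endpoint analysis; (iii) the boundary condition at $0$ makes the eigenspace at most one-dimensional; hence it is spanned by $\mb g$.
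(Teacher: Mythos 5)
Your proposal is correct and follows the same skeleton as the paper: verify $L\mb g=\mb g$ directly, use Lemma \ref{Lemma:closure} to reduce any eigenfunction for $\lambda=1$ to a solution $u=\int_0^\rho u_2$ of the second-order ODE, kill the singular branch at the outer endpoint, and recover $u_1$ from $u_2$ via the first-component formula. Where you differ is in how one-dimensionality is established: you pass to $z=\rho^2$ and reuse the hypergeometric connection-coefficient machinery of Lemma \ref{lemma:spect_L} (Gamma pole at $a+1-c=0$, hence $c_1=0$ and $v_1\propto v_0(z)=z^{1/2}$, indeed $v_1(z)={}_2F_1(-\tfrac12,\tfrac{p+1}{p-1};\tfrac{p+1}{p-1};1-z)=z^{1/2}$ exactly), whereas the paper stays in the $\rho$ variable and writes down an explicit fundamental system $\{h_0,h_1\}$ with $h_0(\rho)=\rho$ and $h_1(\rho)=(1-\rho^2)^{-2/(p-1)}\tilde h_1(\rho)$, $\tilde h_1(1)\neq 0$, so that continuity of $u$ on $[0,1]$ (from Lemma \ref{Lemma:closure}) immediately forces $u$ to be a multiple of $h_0$ — no connection formula and no boundary condition at $\rho=0$ is needed for the uniqueness part. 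The paper's variant buys one thing your route glosses over: your remark that ``the logarithmic degeneracy does not occur'' because $1\neq 1-\tfrac{2}{p-1}$ is not quite accurate, since at $z=1$ the exponent difference is $c-a-b=-\tfrac{2}{p-1}$, which is a negative integer for $p=3,2,\tfrac53,\dots$; in those cases the pair $\{v_1,\tilde v_1\}$ quoted from Lemma \ref{lemma:spect_L} degenerates (the third parameter $c+1-a-b$ is a nonpositive integer) and the second Frobenius solution carries a logarithm. This does not break your argument — the second solution still behaves like $(1-z)^{-2/(p-1)}$ up to logarithmic corrections, hence is unbounded and excluded by $H^1\hookrightarrow C[0,1]$ — but you should justify the exclusion by Frobenius theory at the regular singular point rather than by the non-degenerate fundamental system, which is precisely the technicality the paper's explicit choice of $h_1$ avoids.
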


\begin{proof}
Note that $\mb g \in \mc D(L)$ and a straightforward calculation yields $(1-L)\mb g=0$. In particular by Lemma \ref{Lemma:closure} and the definition of $\kappa_0$ we infer that 
\begin{equation}
\label{Eq:gauge_eigenvalue_first_comp}
g_1(\rho)=\rho g_2(\rho)+ \tfrac{2}{p-1}\int_0^\rho g_2(s)ds 
\end{equation}
and
\begin{equation}
\label{Eq:gauge_eigenvalue}
-(1-\rho^2) g''(\rho) + \tfrac{2(p+1)}{p-1} \rho g'(\rho) - \tfrac{2(p+1)}{p-1} g(\rho) =0
\end{equation}
for $g(\rho):=\int_0^{\rho} g_2(s)ds=\rho$. 
Suppose there is another eigenfunction $\tilde{\mb g}$ for $\lambda=1$.  Then this corresponds to another (weak) solution $\tilde g(\rho):=\int_0^{\rho} \tilde {g_2}(s)ds$ of Eq.~ (\ref{Eq:gauge_eigenvalue}). 
A fundamental system of Eq.~\eqref{Eq:gauge_eigenvalue} is given by 
$\{h_0,h_1\}$, where $h_0(\rho) = \rho$ and
$$h_1(\rho)=(1-\rho^2)^{-\frac{2}{p-1}} \tilde h_1(\rho)$$
with $\tilde h_1(\rho)={}_2F_1(1,\tfrac12 - \tfrac{p+1}{p-1};\tfrac12;\rho^2)$ and $\tilde h_1(1) \neq 0$ for $1<p\leq 3$.  
However, by Lemma \ref{Lemma:closure}, $\tilde g \in C[0,1]$ and thus it must be a multiple of $h_0=g$. Therefore, there exists a constant $c \in \C$ such that
$$ \int_0^{\rho}\tilde g_2(s)ds =c \int_0^{\rho} g_2(s)ds$$ and we infer that $\tilde g_2=c g_2$. Eq.~ (\ref{Eq:gauge_eigenvalue_first_comp}) implies  $\tilde g_1=c g_1$ and we conclude that $\tilde{\mb g}=c \mb g$.
\end{proof}

\subsection{Spectral projection and linear time evolution restricted to the stable subspace}

The symmetry mode is an explicit example of an exponentially growing solution of the linearized equation. 
However, its origin will only become clear in the course of the nonlinear perturbation theory. 
We will see that it is due to the time translation invariance of the problem and thus, we do not consider 
this instability as a ``physical'' one.
The aim of this section is to remove the symmetry eigenvalue  $\lambda = 1$ via a Riesz projection 
and to obtain a growth estimate for the solution of the linearized equation on the stable subspace. 
We define a projection operator 
\begin{equation}\label{eq:spectral_proj}
P=\frac{1}{2\pi i}\int_\gamma R_L(\lambda) d\lambda,
\end{equation}
where $\gamma$ is a circle that lies entirely in $\rho(L)$ and encloses the eigenvalue $1$ in such a way that no other 
spectral points of $L$ lie inside $\gamma$. The projection $P$ commutes with $L$ in the sense that $PL \subset LP$ and as a consequence, $P$ also commutes with the semigroup generated by 
$L$, i.e., $PS(\tau)=S(\tau)P$ for $\tau \geq 0$. We define subspaces $\mc M=P\mc H$ and $\mc N = (1-P)\mc H$ which decompose 
the operator $L$ into parts living on $\mc M$ and $\mc{N}$, respectively. 
Let $L_{\mc N}$ be defined by $L_{\mc N} \mb u := L \mb u$ with $\mc D(L_{\mc N})=\mc D(L) \cap \mc N$ 
($L_{\mc M}$ is then defined analogously).
Since $\mc N$ and $\mc M$ are closed subspaces we can regard $L_{\mc N}$ and $L_{\mc M}$ as linear operators on the Hilbert spaces 
$\mc N$ and $\mc M$, respectively, with spectra $\sigma(L_{\mc M}) = \{1\}$  and
$\sigma(L_{\mc N}) = \sigma(L)\backslash \{1\}$. In the following we call $\mc M$ the ``unstable subspace''.
The operator $L$ is not self--adjoint and therefore, the next result is nontrivial and 
crucial for the nonlinear perturbation theory.

\begin{lemma}
\label{lem:algebraic_multi}
The unstable subspace $\mc M $ is spanned by the symmetry mode, i.e., $ P \mc H =\langle \mb{g} \rangle$ and the algebraic multiplicity of $1 \in \sigma_p(L)$ is one.
\end{lemma}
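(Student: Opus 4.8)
Since the Riesz projection $P$ associated to the isolated spectral point $\lambda=1$ has range equal to the algebraic eigenspace $\bigcup_{n\geq 1}\ker\bigl((1-L)^n\bigr)$, the statement $P\mc H=\langle\mb g\rangle$ is equivalent to showing that this algebraic eigenspace is one--dimensional. By Lemma \ref{Lemma:gauge_eigevalue} the \emph{geometric} eigenspace $\ker(1-L)$ is already one--dimensional and spanned by $\mb g$, so it suffices to rule out a generalized eigenfunction at the first level, i.e.\ to show there is no $\mb v\in\mc D(L)$ with $(1-L)\mb v=\mb g$. Indeed, once $\ker\bigl((1-L)^2\bigr)=\ker(1-L)$ is established, a standard induction gives $\ker\bigl((1-L)^n\bigr)=\ker(1-L)$ for all $n$, hence the algebraic multiplicity is one and $P\mc H=\langle\mb g\rangle$.

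\textbf{Reducing to an ODE.} The key step is therefore: assume for contradiction that $\mb v\in\mc D(L)$ solves $(1-L)\mb v=\mb g$. I would apply Lemma \ref{Lemma:closure} with $\lambda=1$ and $\mb f=\mb g$: this expresses $v_1$ in terms of $v_2$ and converts the system into the \emph{inhomogeneous} second-order equation \eqref{Eq:inhom_eigenval} for $v(\rho):=\int_0^\rho v_2(s)\,ds$, namely
\begin{equation*}
-(1-\rho^2)v''(\rho)+\tfrac{2(p+1)}{p-1}\,\rho\, v'(\rho)-\tfrac{2(p+1)}{p-1}\,v(\rho)=g_1(\rho)+\rho g_2(\rho)+\tfrac{p+1}{p-1}\smallint_0^\rho g_2(s)\,ds,
\end{equation*}
with $v\in H^2_{\mathrm{loc}}(0,1)\cap C[0,1]\cap C^1[0,1)$ and $v(0)=0$. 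Using $g_2(\rho)=1$ and $g_1(\rho)=\tfrac{p+1}{p-1}\rho$ from \eqref{Eq:gaugemode}, the right-hand side is an explicit polynomial (proportional to $\rho$), and the homogeneous part is exactly the operator in \eqref{Eq:gauge_eigenvalue}, whose fundamental system $\{h_0,h_1\}$ with $h_0(\rho)=\rho$ and $h_1(\rho)=(1-\rho^2)^{-2/(p-1)}\tilde h_1(\rho)$, $\tilde h_1(1)\neq0$, was already identified in the proof of Lemma \ref{Lemma:gauge_eigevalue}. I would produce a particular solution by variation of parameters (or simply by an educated polynomial/ansatz guess, e.g.\ $v_p(\rho)=\alpha\rho\log\frac{1+\rho}{1-\rho}+\beta$ type terms coming from integrating against $h_0$), and then write the general solution as $v=v_p+c_0 h_0+c_1 h_1$.

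\textbf{Forcing a contradiction via the boundary behavior.} The regularity constraints then pin down the constants and produce the contradiction. The condition $v\in C[0,1]$ (from Lemma \ref{Lemma:closure}, since $v_2\in L^2(0,1)$ forces $v\in H^1(0,1)\hookrightarrow C[0,1]$) kills any component that blows up as $\rho\to1^-$; one checks that the particular solution $v_p$ carries a logarithmic (or power-type) singularity at $\rho=1$ that \emph{cannot} be cancelled by $c_0 h_0$ (which is bounded) and that would require $c_1 h_1$ to have a compensating singularity of the wrong type, whereas $h_1$ near $\rho=1$ behaves like $(1-\rho^2)^{-2/(p-1)}$ times a nonzero constant — an algebraic blow-up, not a logarithmic one. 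Hence matching is impossible unless the offending coefficient vanishes, and tracking that coefficient back shows it is a nonzero multiple of the solvability obstruction, i.e.\ a Fredholm-type condition $\langle\mb g,\mb g^*\rangle\neq0$ against the adjoint mode. The upshot is that no such $\mb v$ exists. The main obstacle is purely computational: correctly computing the particular solution and its precise asymptotics at $\rho=1$ (and, secondarily, at $\rho=0$, where $v(0)=0$ must hold), and verifying that the singular terms genuinely fail to cancel for all $1<p\leq 3$; the conceptual structure (geometric multiplicity one already known, plus a solvability obstruction at the first Jordan level) is routine Fredholm/semigroup perturbation theory.

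\textbf{Conclusion.} Combining: $\ker\bigl((1-L)^2\bigr)=\ker(1-L)=\langle\mb g\rangle$, the induction extends this to all powers, so the algebraic eigenspace at $\lambda=1$ is $\langle\mb g\rangle$, the algebraic multiplicity is one, and since the Riesz projection $P$ projects onto precisely this algebraic eigenspace, $P\mc H=\langle\mb g\rangle$.
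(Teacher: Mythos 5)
Your reduction has a genuine gap right at the start: you assert that the range of the Riesz projection $P$ for the isolated spectral point $\lambda=1$ equals the generalized eigenspace $\bigcup_{n\geq 1}\ker\bigl((1-L)^n\bigr)$. For a nonself-adjoint, unbounded generator this is not automatic. The range of a Riesz projection always \emph{contains} the generalized eigenspace, but equality requires knowing that $1$ is a pole of the resolvent of finite rank, equivalently here that $\mc M=P\mc H$ is finite dimensional. A priori $1-L_{\mc M}$ could be merely quasinilpotent on an infinite-dimensional $\mc M$: think of $1+V$ with $V$ a Volterra operator, where $\{1\}$ is an isolated spectral point, the spectral subspace is the whole space, and yet there are no eigenvectors at all, let alone Jordan chains. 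So even after you prove $\ker\bigl((1-L)^2\bigr)=\ker(1-L)=\langle\mb g\rangle$ (and the induction to all powers, which is fine), you cannot conclude $P\mc H=\langle\mb g\rangle$. The paper closes exactly this hole first: if $\dim\mc M=\infty$, then $1$ would belong to the essential spectrum of $L$, which is stable under the compact perturbation $L'$, contradicting $1\notin\sigma(L_0)$; only after this is $1-L_{\mc M}$ nilpotent on a finite-dimensional space, making the reduction to the nonexistence of a first-level Jordan vector legitimate. Your argument needs this step (or some other proof that $\lambda=1$ is a finite-rank pole, e.g.\ via analytic Fredholm theory applied to $1-L'R_{L_0}(\lambda)$), and nothing in your proposal supplies it.

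The ODE part of your plan is essentially the paper's route (Lemma \ref{Lemma:closure} with $\lambda=1$, $\mb f=\mb g$, fundamental system $\{h_0,h_1\}$, variation of parameters), but you leave the decisive point as ``purely computational'' and your description of where the contradiction comes from is off. In the paper, the condition $v(0)=0$ fixes $c_1$, and then continuity of $v$ on $[0,1]$ confronts the term $h_1(\rho)\int_0^\rho s^2(1-s^2)^{2/(p-1)}\,ds$, which behaves like $(1-\rho^2)^{-2/(p-1)}$ times a constant as $\rho\to 1$; boundedness forces $\int_0^1 s^2(1-s^2)^{2/(p-1)}\,ds=0$, impossible since the integrand is strictly positive. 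So the obstruction is an algebraic blow-up whose coefficient is a manifestly positive integral, not a mismatch between logarithmic and power-type singularities of a particular solution, and no appeal to an adjoint mode or abstract Fredholm alternative is needed. This portion of your sketch would go through once computed, but the missing finite-dimensionality (pole) argument for $\mc M$ is the essential idea your proposal lacks.
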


\begin{proof}
The case $\dim \mc M = \infty$ can be ruled out by an abstract argument: if $\dim \mc M=\infty$ 
then, by \cite{kato}, p.~239, Theorem 5.28,
$1$ would belong to the essential spectrum of $L$ which is stable under compact perturbations 
(see \cite{kato} p.~244, Theorem 5.35). However, $1 \not \in \sigma(L_0)$ and this yields a contradiction.
We conclude that $L_{\mc M}$ is in fact a finite-dimensional operator. 
Since $1$ is an eigenvalue of $L_{\mc M}$ and, according to Lemma \ref{Lemma:gauge_eigevalue}, 
the corresponding geometric eigenspace is spanned by $\mb g$, we obtain $\mb g \in \mc M$ and thus, 
$\langle \mb{g} \rangle \subset \mc M$.

It remains to prove the reverse inclusion. Note that $(1-L_{\mc M})$ is nilpotent since $0$ is the only eigenvalue, i.e., 
there exists an $m \in \N$ such that $(1-L_{\mc M})^m \mb u =0$ for arbitrary $\mb u \in \mc M$. 
If $m=1$ then $\mc M \subset \ker(1-L_{\mc M})=\langle \mb{g} \rangle$ and we are done. 
Suppose that $m \geq 2$. Then there exists a nontrivial $\mb v \in \mathrm{rg}(1-L_{\mc M})$ such that 
$(1-L_{\mc M}) \mb v = 0$, i.e., $\mb v \in  \ker(1-L_{\mc M})$ and $\mb v$
must therefore be a multiple of the symmetry mode. This shows that there exists a $\mb u \in \mc D(L_{\mc M})$ with $(1-L_{\mc M}) \mb u = c \mb g$. 
We will show that this leads to a contradiction. We set $c=1$ without loss of generality. 
Suppose there exists a function $\mb u$ in $\mc D(L)$ such that $(1-L)\mb u = \mb g$. Then, by Lemma \ref{Lemma:closure}, 
\begin{equation}
-(1-\rho^2) u''(\rho) + \tfrac{2(p+1)}{p-1} \rho u'(\rho) - \tfrac{2(p+1)}{p-1} u(\rho) = g(\rho)
\end{equation}
for $u(\rho):=\int_0^{\rho} u_2(s) ds$ and $g(\rho) := g_1(\rho) + \rho g_2(\rho) +
\tfrac{p+1}{p-1}   \smallint_0^{\rho} g_2(s) d s = \tfrac{3p+1}{p-1} \rho$.
For the homogeneous equation we have the fundamental system $\{h_0,h_1\}$ introduced in the proof of Lemma \ref{Lemma:gauge_eigevalue}, where $h_0(\rho) = \rho$ and $h_1(\rho)=(1-\rho^2)^{-\frac{2}{p-1}} \tilde h_1(\rho)$ with $\tilde h_1$ continuous on $[0,1]$ and $\tilde h_1(0) \neq 0$. The Wronskian is given by 
$$W(h_0,h_1) = -(1-\rho^2)^{-\tfrac{p+1}{p-1}}$$
and thus, a solution of the inhomogeneous equation must be of the form
$$ u(\rho)=c_0 h_0(\rho) +c_1 h_1(\rho)- h_0(\rho) \int_{\rho_0}^\rho h_1(s) g(s) (1-s^2)^{\frac{2}{p-1}} ds
+ h_1(\rho)\int_{\rho_1}^\rho h_0(s) g(s) (1-s^2)^{\frac{2}{p-1}}  ds $$
for some constants $c_0,c_1 \in \C$ and $\rho_0,\rho_1 \in [0,1]$. 
The boundary condition at $\rho=0$ implies 
$c_1= - \int_{\rho_1}^0 h_0(s) g(s) (1-s^2)^{\frac{2}{p-1}}  ds$ and inserting the definitions of $h_0, h_1$ and $g$ yields 
$$ u(\rho)=c_0 \rho - \tfrac{3p+1}{p-1} \rho \int_{\rho_0}^\rho s \tilde h_1(s)ds
+ \tfrac{3p+1}{p-1} (1-\rho^2)^{-\frac{2}{p-1}} \tilde h_1(\rho) \int_{0}^\rho s^2(1-s^2)^{\frac{2}{p-1}} ds.$$
Since $u$ belongs to $C[0,1]$ (Lemma \ref{Lemma:closure}), we must have $\int_{0}^1 s^2(1-s^2)^{\frac{2}{p-1}} ds=0$.
However, this is impossible since the integrand is strictly positive for all $s \in (0,1)$.
\end{proof}

In order to improve the growth estimate in Lemma \ref{cor:gen} we apply a well-known theorem by Gearhart, Pr\"{u}ss and Greiner. To this end we need the following result, which states that the resolvent is uniformly bounded in some right half plane.
In the following we set $H_a:=\{\lambda \in \C: \mathrm{Re} \lambda \geq a\}$ for $a \in \R$. 

\begin{lemma}\label{ResolventBounds}
For any $\varepsilon > 0$ there exist constants $c_1,c_2 > 0$ such that $$\|R_{L}(\lambda)\| \leq c_1$$
for all $\lambda \in H_{\tilde \omega_p + \varepsilon}$ with $|\lambda| \geq c_2$.
 \end{lemma}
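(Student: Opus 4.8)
The plan is to reduce everything to an explicit analysis of the ODE \eqref{Eq:inhom_eigenval} that was derived in Lemma \ref{Lemma:closure}. First I would fix $\varepsilon>0$ and restrict attention to $\lambda\in H_{\tilde\omega_p+\varepsilon}$; by Lemma \ref{lemma:spect_L} all such $\lambda$ with $|\lambda|$ large lie in the resolvent set $\rho(L)$ (the only spectral point with $\mathrm{Re}\,\lambda>\tilde\omega_p$ is $\lambda=1$, which is bounded), so $R_L(\lambda)$ is well-defined there and the only issue is a uniform bound as $|\lambda|\to\infty$. Given $\mb f\in\mc H$, set $\mb u:=R_L(\lambda)\mb f$, so $(\lambda-L)\mb u=\mb f$; by Lemma \ref{Lemma:closure} the function $u(\rho):=\int_0^\rho u_2(s)\,ds$ solves \eqref{Eq:inhom_eigenval} weakly with $u(0)=0$, and it suffices to bound $\|u_1\|_{L^2}+\|u_2\|_{L^2}$ by $\|\mb f\|$ uniformly. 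Because of the formula $u_1=\rho u_2+(\lambda+\tfrac{2}{p-1}-1)\int_0^\rho u_2-\int_0^\rho f_2$, controlling $u_2=u'$ (together with $\int_0^\rho u_2$, i.e.\ $u$ itself) in $L^2$ is the whole game.

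The core step is an energy estimate for \eqref{Eq:inhom_eigenval}. I would rewrite the equation in the (formally self-adjoint after an integrating factor) form and test against a suitable weighted version of $\bar u$ — concretely, multiply by $(1-\rho^2)^{\alpha}\bar u'$ or by $\bar u$ with weight $(1-\rho^2)^{2\lambda/\cdots}$-type factors chosen so that the $\lambda\rho u'$ term integrates by parts favorably — and integrate over $(0,1)$. The point is that for $\mathrm{Re}\,\lambda$ bounded below and $|\lambda|$ large, the zeroth-order coefficient $(\lambda+\tfrac{2}{p-1})(\lambda+\tfrac{2}{p-1}-1)-p\kappa_0$ has modulus $\sim|\lambda|^2$ and, after taking real/imaginary parts of the identity, one gets a coercive combination: roughly $|\lambda|^2\|u\|^2 + \|(1-\rho^2)^{1/2}u'\|^2 \lesssim \|\mb f\|\,(\|u\|+|\lambda|^{-1}\|u'\|+\dots)$, from which $\|u\|\lesssim|\lambda|^{-1}\|\mb f\|$ and $\|(1-\rho^2)^{1/2}u'\|\lesssim\|\mb f\|$. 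This is exactly the kind of computation done for the unperturbed operator in \cite{roland1}; the compact perturbation $L'$ contributes only lower-order terms ($p\kappa_0\int_0^\rho u_2$, bounded by $\|u_2\|_{L^2}$ times a constant independent of $\lambda$) which are absorbed for $|\lambda|$ large. The delicate point is the behavior near the endpoint $\rho=1$, where $(1-\rho^2)$ degenerates: one must choose the multiplier so that no uncontrolled boundary term at $\rho=1$ appears, using that $u\in C[0,1]$ and that the natural energy only controls $u'$ with the degenerate weight. I expect this endpoint bookkeeping — getting the weights in the test function right so that the $\rho=1$ boundary terms vanish or have a good sign while still producing the $|\lambda|^2$-coercivity — to be the main obstacle; it is routine in spirit but must be done carefully.

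Once $\|u\|_{L^2}\lesssim|\lambda|^{-1}\|\mb f\|$ and $\|(1-\rho^2)^{1/2}u'\|_{L^2}\lesssim\|\mb f\|$ are in hand, I would finish as follows. On any fixed interval $[0,1-\delta]$ the weight is harmless, so $\|u_2\|_{L^2(0,1-\delta)}=\|u'\|_{L^2(0,1-\delta)}\lesssim\|\mb f\|$; near $\rho=1$ I would instead go back to the first-order system directly. From the second equation $(\lambda+\tfrac{2}{p-1})u_2+\rho u_2'-u_1'=f_2$ and the first equation one can solve for $u_2$ on $(1-\delta,1)$ as a first-order ODE in $u_1$ (or use the explicit variation-of-constants representation analogous to the one in the proof of Lemma \ref{lem:algebraic_multi}), and the resulting integral operator has norm $O(1)$ uniformly for $|\lambda|$ large, giving $\|u_2\|_{L^2(1-\delta,1)}\lesssim\|\mb f\|+\|u_1\|_{L^2}$. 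Finally the algebraic relation for $u_1$ yields $\|u_1\|_{L^2}\lesssim(1+|\lambda|\,\||\lambda|^{-1}|)\|\mb f\| + \|u_2\|_{L^2}\lesssim\|\mb f\|$, since the dangerous factor $|\lambda|$ multiplying $\int_0^\rho u_2=u$ is exactly compensated by the bound $\|u\|\lesssim|\lambda|^{-1}\|\mb f\|$. Combining, $\|\mb u\|_{\mc H}=\|R_L(\lambda)\mb f\|\lesssim\|\mb f\|$ with a constant $c_1$ depending only on $\varepsilon$ (and $p$), valid once $|\lambda|\geq c_2$ for a suitable threshold $c_2$. This establishes the claimed uniform bound.
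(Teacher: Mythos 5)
Your ``core step'' contains a genuine gap. For $\lambda\in H_{\tilde\omega_p+\varepsilon}$ with $|\mathrm{Im}\,\lambda|$ large, the zeroth-order coefficient in \eqref{Eq:inhom_eigenval} satisfies $\mathrm{Re}\big[(\lambda+\tfrac{2}{p-1})(\lambda+\tfrac{2}{p-1}-1)\big]\approx -(\mathrm{Im}\,\lambda)^2$, so pairing the equation with $\bar u$ (with any nonnegative weight) produces a large \emph{negative} term $-(\mathrm{Im}\,\lambda)^2\|u\|^2$ rather than the asserted coercive $+|\lambda|^2\|u\|^2$; the imaginary part of the resulting identity only contributes terms linear in $\mathrm{Im}\,\lambda$, so no combination of real and imaginary parts gives the estimate you postulate. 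This is not an endpoint-bookkeeping issue at $\rho=1$: the operator is wave-like (non-sectorial), and uniform resolvent bounds on vertical half-planes are precisely the nontrivial content here (this is why Gearhart--Pr\"uss is invoked afterwards); they do not follow from naive multiplier coercivity. A direct ODE route is possible, but it requires quantitative large-$|\lambda|$ analysis of the solutions (e.g.\ via the hypergeometric representation or a perturbative construction in $1/\lambda$), none of which your sketch supplies. Note also that the right-hand side of \eqref{Eq:inhom_eigenval} contains $(\lambda+\tfrac{2}{p-1})\int_0^\rho f_2$, so $\|F\|\lesssim|\lambda|\|\mb f\|$ and you would in fact need a gain of order $|\lambda|^{-2}$ from the equation --- exactly the missing estimate.

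The paper's proof is much softer and uses only material already established: writing $R_L(\lambda)=R_{L_0}(\lambda)[1-L'R_{L_0}(\lambda)]^{-1}$, the uniform bound $\|R_{L_0}(\lambda)\|\leq(\mathrm{Re}\,\lambda-\tilde\omega_p)^{-1}\leq\varepsilon^{-1}$ on $H_{\tilde\omega_p+\varepsilon}$ comes for free from Corollary \ref{Cor_specL0} (the first-order dissipativity estimate --- the very bound your energy argument would have to reproduce by hand). Then the identity from Lemma \ref{Lemma:closure} that you quote, applied to $\mb u=R_{L_0}(\lambda)\mb f$, gives $(\lambda-1+\tfrac{2}{p-1})K[R_{L_0}(\lambda)\mb f]_2=[R_{L_0}(\lambda)\mb f]_1-\rho[R_{L_0}(\lambda)\mb f]_2+Kf_2$, hence $\|L'R_{L_0}(\lambda)\|\lesssim|\lambda-1+\tfrac{2}{p-1}|^{-1}\to0$ as $|\lambda|\to\infty$, and the Neumann series for $[1-L'R_{L_0}(\lambda)]^{-1}$ closes the argument. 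If you want to repair your proposal, the cleanest fix is to drop the second-order energy estimate entirely and argue along these perturbative lines, using the smoothing structure of $L'$ (it acts only through $K$ on the second component) rather than treating $-p\kappa_0 u$ as part of a coercive potential.
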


\begin{proof}
Fix $\varepsilon > 0$ and let $\lambda \in H_{\tilde \omega_p + \varepsilon}$ where $\lambda \not \in \{1 - \frac{2}{p-1}, 1\}$. 
We use the identity 
$$R_L(\lambda)=R_{L_0}(\lambda)[1-L'R_{L_0}(\lambda)]^{-1}$$ to obtain uniform bounds on the resolvent for $|\lambda|$ large.
By definition of $L'$ we have
$$L'R_{L_0}(\lambda) \mb f =  \left ( \begin{array}{c} p\kappa_0 K[R_{L_0}(\lambda)\mb f]_2
\\ 0 \end{array} \right )$$
where $K: L^2(0,1) \to L^2(0,1)$ is defined by $Ku(\rho) = \int_0^\rho u(s) ds$.
For $\mb f \in \mc H$ consider the equation $(\lambda - L_0)\mb u = \mb f$. 
Its solution is given by $\mb u=R_{L_0}(\lambda)\mb f$.  Lemma \ref{Lemma:closure} yields
$$[R_{L_0}(\lambda)\mb f]_1(\rho) = (\lambda -1 + \tfrac{2}{p-1}) K[R_{L_0}(\lambda)\mb f]_2(\rho) +
\rho [R_{L_0}(\lambda)\mb f]_2(\rho) - Kf_2(\rho).$$
The estimate in Lemma \ref{Cor_specL0} implies
$$\|[R_{L_0}(\lambda)\mb f]_j\|_{L^2(0,1)} \leq \|R_{L_0}(\lambda)\mb f \| \leq \frac{\|\mb f\|}{|\mathrm{Re} 
\lambda - \tilde \omega_p|}$$
for $j =1,2$ and we obtain
$$ \|K[R_{L_0}(\lambda)\mb f]_2 \|_{L^2(0,1)} \lesssim \frac{\|\mb f\|}{|\lambda -1 + \tfrac{2}{p-1}|}.$$
Thus, for $|\lambda|$ sufficiently large, the Neumann series
$$ [1-L'R_{L_0}(\lambda)]^{-1} =  \sum_{k=0}^{\infty} [L'R_{L_0}(\lambda)]^k$$
converges and the claim follows. 
\end{proof}

We conclude the linear perturbation theory with an estimate of the linear evolution on the stable subspace.

\begin{proposition}\label{Prop:LinearTimeEvolution}
Let $P$ be the spectral projection defined in Eq.~(\ref{eq:spectral_proj}) and
set \[ \omega_p := \max \left \{-1,\tfrac{1}{2} - \tfrac{2}{p-1} \right\}. \] 
Then, for any $\varepsilon>0$, there exists a constant $C_{\varepsilon} > 0$ such that the 
semigroup $S(\tau)$ given in Corollary \ref{cor:gen} satisfies
\begin{equation}\label{Est:LinearTimeEvolution}
\|S(\tau)(1-P)\mb f\| \leq C_{\varepsilon} e^{(-|\omega_p|+\varepsilon) \tau}\|(1-P)\mb f\|
\end{equation}
for all $\tau \geq 0$ and $\mb f \in \mc H$.
Furthermore, $S(\tau)P \mb f = e^{\tau} P \mb f$.
\end{proposition}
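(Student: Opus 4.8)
The plan is to split $S(\tau)$ through the spectral decomposition $\mathcal H = \mathcal M \oplus \mathcal N$ provided by the Riesz projection $P$, and treat the two pieces separately. On $\mathcal M = \langle \mb g\rangle$ we have $L\mb g = \mb g$, so the semigroup acts by $S(\tau)P\mb f = e^\tau P\mb f$, which is the last assertion; this is immediate from $\frac{d}{d\tau}S(\tau)P\mb f = LP S(\tau)P\mb f = S(\tau)PL\mb g\,(\text{coefficient})$, more cleanly: $P\mb f = c\,\mb g$ and $S(\tau)c\mb g = c e^\tau \mb g$ by uniqueness of the Cauchy problem in Corollary \ref{cor:gen}. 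The real content is the decay estimate \eqref{Est:LinearTimeEvolution} on $\mathcal N$.

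For the stable part, the strategy is to apply the Gearhart--Pr\"uss--Greiner theorem (see e.g.\ \cite{engel}, p.~302) to the restricted semigroup $S_{\mathcal N}(\tau) := S(\tau)\rst{\mathcal N}$, whose generator is $L_{\mathcal N}$ with $\sigma(L_{\mathcal N}) = \sigma(L)\setminus\{1\}$. By Lemma \ref{lemma:spect_L} this spectrum lies in $\{\mathrm{Re}\,\lambda \le \omega_p\}$ where $\omega_p = \max\{-1,\tilde\omega_p\} = -|\omega_p|$ (since $\tilde\omega_p \le -\tfrac12 < 0$ for $1<p\le 3$). Gearhart--Pr\"uss--Greiner on a Hilbert space says that $\|S_{\mathcal N}(\tau)\| \le C_\varepsilon e^{(-|\omega_p|+\varepsilon)\tau}$ follows once one knows that the resolvent $R_{L_{\mathcal N}}(\lambda)$ is uniformly bounded on the half-plane $H_{-|\omega_p|+\varepsilon}$. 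To get this uniform bound I would argue: (i) on the compact region $\{-|\omega_p|+\varepsilon \le \mathrm{Re}\,\lambda,\ |\lambda|\le c_2\}$ the resolvent of $L_{\mathcal N}$ is bounded because this set is contained in the resolvent set $\rho(L_{\mathcal N})$ (its only possible intersection with $\sigma(L)$ would be $\lambda=1$, which has been removed by passing to $\mathcal N$, and $\lambda=1 - \tfrac{2}{p-1}$, which lies in $\{\mathrm{Re}\,\lambda \le -\tfrac12\}$ hence outside the half-plane once $\varepsilon$ is small — but in any case it is an isolated point handled by boundedness of $R_L$ there combined with Lemma \ref{lemma:spect_L}), and the resolvent is continuous on the resolvent set, hence bounded on a compact subset; (ii) for $|\lambda|\ge c_2$ we already have $\|R_L(\lambda)\|\le c_1$ from Lemma \ref{ResolventBounds}, and since $R_{L_{\mathcal N}}(\lambda) = R_L(\lambda)\rst{\mathcal N}$ for $\lambda\ne 1$ (as $P$ commutes with $R_L(\lambda)$), the same bound transfers. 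Combining (i) and (ii) gives the uniform resolvent bound on all of $H_{-|\omega_p|+\varepsilon}$, so Gearhart--Pr\"uss--Greiner applies.

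Finally, I would turn the estimate on $S_{\mathcal N}$ back into the stated estimate on $S(\tau)(1-P)$: for $\mb f\in\mathcal H$, the vector $(1-P)\mb f$ lies in $\mathcal N$, and $S(\tau)(1-P)\mb f = S(\tau)(1-P)(1-P)\mb f = S_{\mathcal N}(\tau)(1-P)\mb f$ because $S(\tau)$ commutes with $1-P$ and leaves $\mathcal N$ invariant; hence $\|S(\tau)(1-P)\mb f\| = \|S_{\mathcal N}(\tau)(1-P)\mb f\| \le C_\varepsilon e^{(-|\omega_p|+\varepsilon)\tau}\|(1-P)\mb f\|$, which is \eqref{Est:LinearTimeEvolution}.

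The main obstacle is the verification of the uniform resolvent bound on the \emph{full} half-plane $H_{-|\omega_p|+\varepsilon}$, not just for large $|\lambda|$: Lemma \ref{ResolventBounds} only controls $|\lambda|\ge c_2$, and one must carefully argue that the bounded region, after excising $\lambda=1$ via the projection, contributes nothing worse than a finite constant. The subtle point there is to ensure that $\lambda = 1 - \tfrac{2}{p-1}$ (the parameter value where the hypergeometric fundamental system degenerates and where Lemma \ref{ResolventBounds} explicitly excludes $\lambda$) does not actually lie in $\sigma(L)$ beyond what Lemma \ref{lemma:spect_L} guarantees, or else to note it has real part $\le -\tfrac12 < -|\omega_p| + \varepsilon$ is false in general — rather $1-\tfrac{2}{p-1} \le -|\omega_p|$ always holds with equality possible, so for $\varepsilon>0$ this point sits strictly to the \emph{left} of the half-plane $H_{-|\omega_p|+\varepsilon}$ and causes no trouble. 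Making this bookkeeping airtight is the one place where care is genuinely needed; everything else is a routine application of standard semigroup theory.
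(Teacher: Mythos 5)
Your proposal follows essentially the same route as the paper: identify $L_{\mathcal N}$ as the generator of the subspace semigroup $S(\tau)\rst{_{\mathcal N}}$ with resolvent $R_L(\lambda)\rst{_{\mathcal N}}$, apply the Gearhart--Pr\"uss--Greiner theorem using the uniform resolvent bound (Lemma \ref{ResolventBounds} for large $|\lambda|$, plus boundedness on the remaining compact piece of the half-plane, which the paper leaves implicit but which follows exactly as you say from Lemmas \ref{lemma:spect_L} and \ref{lem:algebraic_multi} and analyticity of the resolvent), and obtain $S(\tau)P\mb f=e^{\tau}P\mb f$ from $P\mathcal H=\langle\mb g\rangle$ and $L\mb g=\mb g$. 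One minor inaccuracy in your closing bookkeeping: the claim $1-\tfrac{2}{p-1}\le-|\omega_p|$ is false for $p>\tfrac{7}{3}$ (e.g.\ $p=3$ gives $0>-\tfrac12$), but this is harmless, since whenever that point lies in $H_{-|\omega_p|+\varepsilon}$ Lemma \ref{lemma:spect_L} shows it is not in $\sigma(L)$ at all, so the compact region is still contained in $\rho(L_{\mathcal N})$ and your argument stands.
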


\begin{proof}
The operator $L_{\mc N}$ is the generator of the subspace semigroup $S(\tau)\rst{_\mc N}$ and its resolvent is given
by $R_L(\lambda)\rst{_\mc N}$. The first estimate follows from the uniform boundedness of the resolvent in $H_{\omega_p+\varepsilon}$ (Lemma \ref{ResolventBounds}) and the theorem by Gearhart, Pr\"{u}ss and Greiner (see for example \cite{engel}, p. 302, Theorem 1.11).
The second assertion follows from $P \mc H = \langle \mb g \rangle$ and the fact that $\mb g$ is an eigenfunction of the linear operator $L$ with eigenvalue $1$.
\end{proof}

\section{Nonlinear perturbation theory}

\subsection{Preliminaries}
Now we turn to the full nonlinear problem. The following two lemmas will be used frequently.
\begin{lemma}
\label{est:Linfty}
If $u \in L^2(0,1)$ then $\tilde{u}$, defined by
$\tilde u(\rho):=\frac{1}{\sqrt \rho} \int_0^{\rho} u(s) ds$, belongs to $L^{\infty}(0,1)$ and satisfies
\[ \|\tilde{u}\|_{L^\infty(0,1)}\leq \|u\|_{L^2(0,1)}. \]
\end{lemma}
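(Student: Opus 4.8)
The statement is an elementary Hardy-type inequality on the interval $(0,1)$, so the plan is to prove it by a direct estimate using Cauchy-Schwarz, without invoking any of the semigroup machinery. First I would observe that for $u\in L^2(0,1)$ the function $v(\rho):=\int_0^\rho u(s)\,ds$ is absolutely continuous with $v(0)=0$, and by Cauchy-Schwarz
\[ |v(\rho)| = \left|\int_0^\rho u(s)\,ds\right| \leq \rho^{1/2}\left(\int_0^\rho |u(s)|^2\,ds\right)^{1/2} \leq \rho^{1/2}\,\|u\|_{L^2(0,1)}. \]
Dividing by $\rho^{1/2}$ immediately gives $|\tilde u(\rho)| = \rho^{-1/2}|v(\rho)| \leq \|u\|_{L^2(0,1)}$ for every $\rho\in(0,1)$, which is exactly the claimed bound; in particular $\tilde u$ is (essentially) bounded, hence in $L^\infty(0,1)$.

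That single line actually proves everything, so the remaining work is only cosmetic: I would note that $\tilde u$ is even continuous on $(0,1]$ and extends continuously to $0$ with value $0$ (since $|\tilde u(\rho)|\le \rho^{1/2}\|u\|$ as well, a slightly sharper local bound), which makes the membership in $L^\infty(0,1)$ completely transparent and avoids any measurability quibble. One could alternatively phrase it via the substitution $\tilde\varphi=r\varphi$ already used in the paper, but the Cauchy-Schwarz argument is the cleanest.

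There is essentially no obstacle here; the only thing to be careful about is that the bound must hold with the constant exactly $1$ (not just up to a constant), which is why the naive estimate $\int_0^\rho|u|^2 \le \int_0^1|u|^2 = \|u\|_{L^2(0,1)}^2$ combined with $\rho^{1/2}\cdot\rho^{-1/2}=1$ is used, rather than any interpolation or duality argument that would lose track of the optimal constant. I would therefore present the proof as the two-line display above, remark on the continuity of $\tilde u$, and conclude.
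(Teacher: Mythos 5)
Your proof is correct and is essentially the same as the paper's: both apply Cauchy--Schwarz to $\int_0^\rho u$ to get the factor $\rho^{1/2}$, cancel it against $\rho^{-1/2}$, and take the (essential) supremum. The additional remark on continuity of $\tilde u$ is harmless but not needed.
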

\begin{proof}
First note that $\rho \mapsto \int_0^{\rho} u(s) ds$ is a continuous function on $[0,1]$ for $u \in L^2(0,1)$.
Using the Cauchy-Schwarz inequality we esimate
$$\left |\tilde u(\rho) \right| = \left |\frac{1}{\sqrt \rho}  \smallint_0^{\rho} u(s) ds\right|  \leq \|u\|_{L^2(0,1)}$$
for $\rho \in (0,1]$.
Taking the essential supremum yields the claim.
\end{proof}
We will also use Hardy's inequality in the following form.
\begin{lemma}\label{est:Hardy}
For $u\in L^2(0,1)$ we have
$$ \int_0^1 \frac{|\int_0^{\rho} u(s) ds|^2}{\rho^2}d\rho \lesssim \int_0^1 |u(\rho)|^2d\rho. $$
\end{lemma}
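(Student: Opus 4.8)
The plan is to prove this by the classical Hardy integration-by-parts trick. Set $U(\rho):=\int_0^{\rho}u(s)\,ds$. By Cauchy--Schwarz, $|U(\rho)|\le\sqrt{\rho}\,\|u\|_{L^2(0,\rho)}\le\sqrt{\rho}\,\|u\|_{L^2(0,1)}$, so $U$ is absolutely continuous on $[0,1]$ with $U'=u$ almost everywhere, and $\rho^{-1}U(\rho)^2\le\|u\|_{L^2(0,\rho)}^2\to0$ as $\rho\to0^+$.

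To keep every quantity finite, I would first fix $\varepsilon\in(0,1)$, integrate by parts on $(\varepsilon,1)$ using $\frac{d}{d\rho}\big({-}\tfrac1\rho\big)=\tfrac1{\rho^2}$, and discard the nonpositive boundary term at $\rho=1$:
\begin{equation*}
\int_{\varepsilon}^{1}\frac{U(\rho)^2}{\rho^2}\,d\rho
= -U(1)^2+\frac{U(\varepsilon)^2}{\varepsilon}+2\int_{\varepsilon}^{1}\frac{U(\rho)u(\rho)}{\rho}\,d\rho
\le \frac{U(\varepsilon)^2}{\varepsilon}+2\int_{\varepsilon}^{1}\frac{U(\rho)u(\rho)}{\rho}\,d\rho.
\end{equation*}
On $[\varepsilon,1]$ the integrand $U^2/\rho^2$ is bounded, so $A_\varepsilon:=\int_{\varepsilon}^{1}U^2\rho^{-2}\,d\rho<\infty$. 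Applying Cauchy--Schwarz to the cross term gives $A_\varepsilon\le \varepsilon^{-1}U(\varepsilon)^2+2A_\varepsilon^{1/2}\|u\|_{L^2(0,1)}$, an inequality of the shape $A\le B+2\sqrt{A}\sqrt{C}$ with $A,B,C\ge0$ finite, which forces $A\le 2B+4C$. Hence
\begin{equation*}
\int_{\varepsilon}^{1}\frac{U(\rho)^2}{\rho^2}\,d\rho\le \frac{2U(\varepsilon)^2}{\varepsilon}+4\|u\|_{L^2(0,1)}^2\le 2\|u\|_{L^2(0,\varepsilon)}^2+4\|u\|_{L^2(0,1)}^2.
\end{equation*}

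Finally, letting $\varepsilon\to0^+$: the first term on the right vanishes, and by monotone convergence the left side increases to $\int_0^1 U^2\rho^{-2}\,d\rho$, yielding $\int_0^1 U(\rho)^2\rho^{-2}\,d\rho\le 4\int_0^1 u(\rho)^2\,d\rho$, which is the assertion (with implicit constant $4$). The only point that needs care --- and it is minor --- is the a priori finiteness required to divide through by $A_\varepsilon^{1/2}$, together with the boundary contribution at $\rho=0$; restricting to $(\varepsilon,1)$ and passing to the limit disposes of both at once. Alternatively, one could first establish the estimate for bounded $u$, where finiteness is immediate, and then extend to general $u\in L^2(0,1)$ by density combined with Fatou's lemma on the left-hand side.
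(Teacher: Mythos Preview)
Your proof is correct. The paper itself offers no proof of this lemma at all: it is simply stated as ``Hardy's inequality in the following form'' and then used freely. Your argument supplies exactly the standard integration-by-parts derivation, and the care you take with the endpoint at $\rho=0$ (working on $(\varepsilon,1)$, using $|U(\varepsilon)|^2/\varepsilon\le\|u\|_{L^2(0,\varepsilon)}^2$, then letting $\varepsilon\to0$) is appropriate. The algebraic step from $A\le B+2\sqrt{A}\sqrt{C}$ to $A\le 2B+4C$ is valid (complete the square and use AM--GM), and the resulting constant $4$ is the classical one.
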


\subsection{Estimates for the nonlinearity}
From now on we restrict ourselves to real--valued functions. We introduce a function $n: \R \times [0,1] \to \R$ defined by
$$n(x,\rho):=\rho \bigg( |\kappa_0^{\frac{1}{p-1}}+x|^{p-1} (\kappa_0^{\frac{1}{p-1}}+x) - p\kappa_0 x - \kappa_0^{\frac{p}{p-1}} \bigg),$$
cf.~Eq.~\eqref{eq:N}.
It is easy to see that  
\[
 |n(x,\rho)| \lesssim \left\{
  \begin{array}{l l}
    \rho |x|^2 & \quad |x| <  1\\
    \rho |x|^p & \quad |x| \geq 1.\\
  \end{array} \right.
\]
A convenient way to write this is $|n(x,\rho)|\lesssim \rho |x|^2\langle x\rangle^{p-2}$ with
the ``japanese bracket'' $\langle x \rangle:=\sqrt{1+|x|^2}$.
In the following we denote by $B_1$ and $\mc B_1$ the open unit balls in $L^2(0,1)$ and $\mc H$, respectively. 
To (formally) define the nonlinearity we introduce an operator $A: L^2(0,1) \to L^2(0,1)$,
$$Au(\rho) := \frac{1}{\rho} \int_0^{\rho} u(s) ds.$$
An application of Hardy's inequality shows that $A$ is bounded. We set
$$N(u)(\rho) := n(Au(\rho),\rho).$$

\begin{lemma}\label{lemma:nonlinL2}
The operator $N$ maps $L^2(0,1)$ into $L^2(0,1)$. Furthermore, there exist constants $c_1,c_2 > 0$ such that for $u,v \in B_1$ 
$$\|N(u) \|_{L^2} \leq c_1 \|u \|_{L^2}^2 $$
and
$$\|N(u)-N(v)\|_{L^2} \leq c_2 (\| u \|_{L^2} + \| v \|_{L^2}) \|u-v\|_{L^2}.$$
\end{lemma}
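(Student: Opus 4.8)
The plan is to reduce everything to the pointwise bound $|n(x,\rho)|\lesssim \rho|x|^2\langle x\rangle^{p-2}$ together with the boundedness of $A$ on $L^2(0,1)$ (Hardy's inequality, Lemma~\ref{est:Hardy}) and the $L^\infty$-estimate of Lemma~\ref{est:Linfty}. Write $w:=Au$, so $N(u)(\rho)=n(w(\rho),\rho)$. Because $1<p\le 3$ we have $p-2\le 1$, so $\langle x\rangle^{p-2}\le\langle x\rangle\lesssim 1+|x|$, hence $|n(x,\rho)|\lesssim \rho(|x|^2+|x|^{p})\lesssim \rho|x|^2$ whenever $|x|\le 1$; and on the region $|x|\ge 1$ we have $|n(x,\rho)|\lesssim \rho|x|^p\le \rho|x|^2$ as well (since $p\le 3$ does not help here directly — rather $p\le 3$ gives $|x|^p\le |x|^2\cdot|x|^{p-2}$ and we will keep the extra $|x|^{p-2}$ factor only where needed). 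The cleanest route: for $u\in B_1$ we will show $|w(\rho)|\le 1$ for a.e.\ $\rho$, which collapses the two cases.

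First I would prove the quadratic bound. The key observation is that for $u\in B_1$, Lemma~\ref{est:Linfty} gives $\big|\tfrac{1}{\sqrt\rho}\int_0^\rho u\big|\le\|u\|_{L^2}<1$, i.e.\ $|w(\rho)|=\rho^{-1}|\int_0^\rho u|\le \rho^{-1/2}\|u\|_{L^2}$. This is not bounded near $\rho=0$, so instead I estimate directly: since the integrand of the $L^2$ norm of $N(u)$ is $|n(w(\rho),\rho)|^2\lesssim \rho^2|w(\rho)|^4\langle w(\rho)\rangle^{2(p-2)}$, and using $p-2\le 1$ together with $|w(\rho)|\le\rho^{-1/2}\|u\|_{L^2}$ we bound $\langle w(\rho)\rangle^{2(p-2)}\lesssim 1+|w(\rho)|^{2(p-2)}\lesssim 1+\rho^{-(p-2)}\|u\|_{L^2}^{2(p-2)}$, and then
$$
\|N(u)\|_{L^2}^2 \lesssim \int_0^1 \rho^2|w(\rho)|^4\,d\rho + \|u\|_{L^2}^{2(p-2)}\int_0^1 \rho^{4-p}|w(\rho)|^4\,d\rho.
$$
Now $\rho^2|w(\rho)|^4 = \rho^{-2}|\int_0^\rho u|^2\cdot|w(\rho)|^2 \le \rho^{-2}|\int_0^\rho u|^2\cdot\|u\|_{L^2}^2$ (again Lemma~\ref{est:Linfty} bounds $\rho|w(\rho)|^2=\rho^{-1}|\int_0^\rho u|^2\le\|u\|_{L^2}^2$, so $\rho^2|w|^4\le\|u\|_{L^2}^2\rho^{-2}|\int_0^\rho u|^2$); integrating and applying Hardy (Lemma~\ref{est:Hardy}) gives $\int_0^1\rho^2|w|^4\,d\rho\lesssim\|u\|_{L^2}^2\cdot\|u\|_{L^2}^2=\|u\|_{L^2}^4$. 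The second integral is even better behaved since $4-p>0$ makes the weight more favorable; the same manipulation yields a bound $\lesssim\|u\|_{L^2}^4$ after absorbing the harmless powers $\|u\|_{L^2}^{2(p-2)}\le 1$. Combining, $\|N(u)\|_{L^2}\le c_1\|u\|_{L^2}^2$.

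For the Lipschitz estimate, set $w=Au$, $\tilde w=Av$. Writing $n(x,\rho)=\rho\,\big(\,\Phi(x)-\Phi(0)-\Phi'(0)x\,\big)$ with $\Phi(x)=|\kappa_0^{1/(p-1)}+x|^{p-1}(\kappa_0^{1/(p-1)}+x)$, we have $n(x,\rho)-n(y,\rho)=\rho\int_0^1\big(\Phi'(y+t(x-y))-\Phi'(0)\big)\,dt\,(x-y)$, and since $\Phi'$ is $(p-1)$-Hölder (for $p\le 2$) or $C^1$ with $\Phi''$ of at most linear growth (for $p\ge 2$) — in either case $|\Phi'(\xi)-\Phi'(0)|\lesssim |\xi|\langle\xi\rangle^{p-2}$ — one gets
$$
|n(x,\rho)-n(y,\rho)|\lesssim \rho\,(|x|+|y|)\,\langle x\rangle^{p-2}\langle y\rangle^{p-2}\,|x-y| \lesssim \rho\,(|x|+|y|)\,|x-y|
$$
for $|x|,|y|\le 1$, using $p-2\le 1$. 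Substituting $x=w(\rho)$, $y=\tilde w(\rho)$ and estimating in $L^2$, the factor $|x-y|=|A(u-v)(\rho)|$ combines with one power of $\rho$ via Hardy, while $\rho^{1/2}|w(\rho)|\le\|u\|_{L^2}$ (Lemma~\ref{est:Linfty}, now in the form $\rho^{1/2}|Au(\rho)|\le\|u\|_{L^2}$) supplies the boundedness; more precisely
$$
\|N(u)-N(v)\|_{L^2}^2 \lesssim \int_0^1 \rho^2(|w|+|\tilde w|)^2|A(u-v)|^2\,d\rho \lesssim (\|u\|_{L^2}+\|v\|_{L^2})^2\int_0^1|A(u-v)(\rho)|^2\,d\rho,
$$
and $\|A(u-v)\|_{L^2}\lesssim\|u-v\|_{L^2}$ by Hardy. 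Taking square roots gives the second claim with a suitable $c_2$.

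The main obstacle is the singularity of $A$ at $\rho=0$: naively $Au\notin L^\infty$, so one cannot just feed $\|Au\|_{L^\infty}$ into the pointwise bound on $n$. The resolution is to exploit the extra power of $\rho$ built into $n(x,\rho)$ (the factor $\rho$ out front) and to pair it with the weighted estimates $\rho^{1/2}|Au(\rho)|\le\|u\|_{L^2}$ and Hardy's inequality in exactly the combination that matches the quartic integrand $\rho^2|Au|^4$; one also has to check that the exponent $p-2$, which is negative for $p<2$, never produces an uncontrollable negative power of $\rho$ — this is where the restriction $1<p\le 3$ (in fact only $p\le 3$, so that $p-2\le 1$, and $p>1$, so that $\kappa_0$ is finite and $\Phi$ is well-defined and smooth away from its single corner) is used. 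A minor technical point worth a sentence is justifying the fundamental-theorem-of-calculus representation of $n(x,\rho)-n(y,\rho)$ across the corner of $|\cdot|^{p-1}(\cdot)$ at $x=-\kappa_0^{1/(p-1)}$; since we restrict to $|x|,|y|\le 1<\kappa_0^{1/(p-1)}$ (which holds for the relevant range of $p$, or can be arranged by shrinking the balls), $\Phi$ is smooth on the relevant interval and the estimate is elementary.
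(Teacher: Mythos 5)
Your overall strategy is the same as the paper's: the pointwise bound $|\Phi'(\xi)-\Phi'(0)|\lesssim|\xi|\langle\xi\rangle^{p-2}$ via the fundamental theorem of calculus, the extra factor $\rho$ in $n(x,\rho)$, the weighted estimate $\sqrt{\rho}\,|Au(\rho)|\le\|u\|_{L^2}$ from Lemma \ref{est:Linfty}, and Hardy's inequality, with $p\le 3$ entering through $\rho^{3-p}\le 1$. (A cosmetic difference: you prove the quadratic bound directly and then the Lipschitz bound, whereas the paper proves the Lipschitz-type estimate first and gets the quadratic bound from $N(0)=0$.) Your handling of the quadratic bound is essentially correct, modulo small slips: the identity $\rho^2|w|^4=\rho^{-2}|\int_0^\rho u|^2\,|w|^2$ is off by $\rho^2$ (the subsequent inequality you actually use is fine), the absorption $\|u\|_{L^2}^{2(p-2)}\le 1$ is only valid for $p\ge2$ (harmless, since for $p<2$ the bracket is $\le1$ and that term is not needed), and the second integral needs one more application of $|\int_0^\rho u|\le\sqrt{\rho}\,\|u\|_{L^2}$ to bring the weight down to $\rho^{3-p}|w|^2$ before Hardy — ``the same manipulation'' is not quite literal, and this is precisely where $p\le3$ is used.

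There is, however, a genuine gap in your Lipschitz estimate for $p\in(2,3]$: you discard the factors $\langle x\rangle^{p-2}\langle y\rangle^{p-2}$ on the grounds that $|x|,|y|\le1$, and likewise your closing remark claims this restriction ``can be arranged by shrinking the balls.'' But after substituting $x=Au(\rho)$, $y=Av(\rho)$ this premise fails: smallness of $\|u\|_{L^2}$ gives no pointwise bound on $Au$ near $\rho=0$ — only $|Au(\rho)|\le\rho^{-1/2}\|u\|_{L^2}$, which is exactly the obstruction you yourself identified at the start. Consequently the displayed inequality $\|N(u)-N(v)\|_{L^2}^2\lesssim\int_0^1\rho^2(|w|+|\tilde w|)^2|A(u-v)|^2\,d\rho$ is not justified as written when $p>2$. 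The estimate is nevertheless true, and the repair is the same weighted bookkeeping you used for the quadratic bound (and which the paper carries out): keep the brackets, write $\rho^2=\rho^{3-p}\cdot\rho^{p-1}$, use $\langle Au(\rho)\rangle^{2(p-2)}\le\rho^{-(p-2)}\langle\rho^{-1/2}\int_0^\rho u\rangle^{2(p-2)}$ together with Lemma \ref{est:Linfty} to absorb $|Au|^2\langle Au\rangle^{2(p-2)}$ into $\|u\|_{L^2}^2\langle\|u\|_{L^2}\rangle^{2(p-2)}$, and then apply Hardy to the remaining $|A(u-v)|^2$, using $\rho^{3-p}\le1$. Finally, the worry about the corner of $|\cdot|^{p-1}(\cdot)$ is unnecessary and in any case cannot be resolved by restricting $|x|,|y|\le1$: since $\Phi\in C^1(\R)$ for $p>1$ and the bound $|\Phi'(\xi)-\Phi'(0)|\lesssim|\xi|\langle\xi\rangle^{p-2}$ holds globally, the integral representation and the pointwise estimate require no restriction on $x,y$ at all.
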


\begin{proof}
Note that for $1 < p \leq 3$ the function $n$ defined as above is at least once continuously differentiable with respect to 
$x$ and we have the bound
\begin{equation*}
|\partial_1 n(x,\rho)| \lesssim \rho |x|\langle x \rangle^{p-2}
\end{equation*}
for all $x \in \R$ and $\rho \in [0,1]$ which, in particular, implies $\partial_1 n(0,\rho)=0$ and hence,
$N(0)=0$.
By the fundamental theorem of calculus we infer that for $x,y \in \R$
\begin{eqnarray*}
|n(x,\rho)-n(y,\rho)|  &\leq & |x-y| \int_0^1 |\partial_1 n(y +h(x-y),\rho)| dh \\ 
& \lesssim & \rho |x-y| \int_0^1  |y +h(x-y)|\langle y+h(x-y)\rangle^{p-2}dh  \\
& \lesssim & \rho |x-y| \left \{ \begin{array}{c} |x|+|y|\quad p \in (1,2] \\
|x|\langle x\rangle^{p-2}+|y|\langle y \rangle^{p-2}\quad p \in (2,3] \end{array} \right . . 
\end{eqnarray*}

Now we prove the estimate for the nonlinear operator $N$.
The following argument works only for $1< p \leq 3$, since for higher exponents the singular factors at $\rho=0$ can no longer be controlled.
For $u,v \in L^2(0,1)$ we write $\tilde u(\rho) := \int_0^{\rho} u(s) ds$ and $\tilde v(\rho) := \int_0^{\rho} v(s) ds$. 
We distinguish two cases. If $p \in (1,2]$ we readily estimate
\begin{align*}
\|N(u)-N(v)\|^2_{L^2} &= \int_0^1 | n(Au(\rho),\rho)- n(Av(\rho),\rho)|^2 d\rho \\
& \lesssim \int_0^1 \rho^2 |Au(\rho)-Av(\rho)|^2 (|Au(\rho)|^2 +|Av(\rho)|^2)  d\rho \\
&\lesssim (\|\tilde{u}\|_{L^\infty}^2+\|\tilde{v}\|_{L^\infty}^2)\int_0^1 |Au(\rho)+Av(\rho)|^2 d\rho \\
&\lesssim (\|u\|_{L^2}^2+\|v\|_{L^2}^2)\|u-v\|_{L^2}^2
\end{align*}
by Lemma \ref{est:Linfty} and Hardy's inequality. 
On the other hand, if $p\in (2,3]$, we proceed similarly and obtain
\begin{align*}
\|N(u)-N(v)\|^2_{L^2} 
& \lesssim \int_0^1 \rho^2 |Au(\rho)-Av(\rho)|^2 (|Au(\rho)|^2\langle Au(\rho)\rangle^{2(p-2)} +|Av(\rho)|^2
\langle Av(\rho)\rangle^{2(p-2)})  d\rho \\
&\lesssim \int_0^1 \rho^{3-p} |Au(\rho)-Av(\rho)|^2 \\
&\quad \times \left (|\rho^{-\frac12}\tilde{u}(\rho)|^2
\langle \rho^{-\frac12}\tilde{u}(\rho)\rangle^{2(p-2)}
+|\rho^{-\frac12}\tilde{v}(\rho)|^2
\langle \rho^{-\frac12}\tilde{v}(\rho)\rangle^{2(p-2)}\right)  d\rho \\
&\lesssim \left (\|u\|_{L^2}^2\langle \|u\|_{L^2}\rangle^{2(p-2)}+\|v\|_{L^2}^2\langle \|v\|_{L^2} \rangle^{2(p-2)} \right )
\|u-v\|_{L^2}^2
\end{align*}
again by Lemma \ref{est:Linfty} and Hardy's inequality.
Since $N(0)=0$ we immediately conclude the boundedness of $N$ on $L^2(0,1)$.
In particular, we have $\|N(u)\|_{L^2} \lesssim \|u\|^2_{L^2}$ for $u \in B_1$. For $u,v \in B_1$ the above estimates yield 
$$\|N(u)-N(v)\|_{L^2} \lesssim  (\| u \|_{L^2} + \| v \|_{L^2} )\| u -v \|_{L^2}$$
as claimed.
\end{proof}
Finally for $\mb u =(u_1,u_2)^T \in \mc H$ we define the vector valued nonlinearity by
$$\mb {N}(\mb u) :=  \left ( \begin{array}{c} N(u_2)  \\ 0  \end{array}\right).  $$

\begin{lemma}\label{est:nonlinearity}
The nonlinearity $\mb {N}$ maps $\mc H$ into $\mc H$, $\mb {N}(\mb 0)=\mb 0$ and there exist constants $c_1,c_2 > 0$ such that for $\mb u$,$\mb v \in \mc B_1$
$$\| \mb {N}(\mb u) \| \leq c_1 \|\mb u \|^2$$
and
$$\| \mb {N}(\mb u) - \mb {N}(\mb v) \| \leq c_2 (\|\mb u\| + \|\mb v\|) \|\mb u - \mb v\|.$$
Furthermore, $\mb {N}$ is Fr\'{e}chet differentiable at $\mb 0$ and $D\mb {N}(\mb 0)=\mb 0$.
\end{lemma}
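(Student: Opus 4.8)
The plan is to reduce the vector-valued statement \emph{directly} to the scalar result of Lemma~\ref{lemma:nonlinL2}, since by construction $\mb N(\mb u)=(N(u_2),0)^T$ and the norm on $\mc H=L^2(0,1)\times L^2(0,1)$ satisfies $\|\mb N(\mb u)\|=\|N(u_2)\|_{L^2}$, while $\|u_2\|_{L^2}\le\|\mb u\|$. First I would observe that $\mb u\in\mc B_1$ forces $u_2\in B_1$, so Lemma~\ref{lemma:nonlinL2} applies to $u_2$; this immediately gives $\|\mb N(\mb u)\|=\|N(u_2)\|_{L^2}\le c_1\|u_2\|_{L^2}^2\le c_1\|\mb u\|^2$. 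Taking $\mb u=\mb 0$ yields $u_2=0$, hence $N(0)=0$ by Lemma~\ref{lemma:nonlinL2} (indeed $n(0,\rho)=0$ and $Au=0$), so $\mb N(\mb 0)=\mb 0$. For the Lipschitz-type bound I would, for $\mb u,\mb v\in\mc B_1$, write $\|\mb N(\mb u)-\mb N(\mb v)\|=\|N(u_2)-N(v_2)\|_{L^2}$ and apply the second estimate of Lemma~\ref{lemma:nonlinL2}, then bound $\|u_2\|_{L^2}+\|v_2\|_{L^2}\le\|\mb u\|+\|\mb v\|$ and $\|u_2-v_2\|_{L^2}\le\|\mb u-\mb v\|$.

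For the Fr\'echet differentiability at $\mb 0$ with $D\mb N(\mb 0)=\mb 0$, the point is that differentiability at $\mb 0$ with vanishing derivative is \emph{equivalent} to the estimate $\|\mb N(\mb u)-\mb N(\mb 0)\|=o(\|\mb u\|)$ as $\|\mb u\|\to 0$. But the quadratic bound $\|\mb N(\mb u)\|\le c_1\|\mb u\|^2$ just established gives precisely $\|\mb N(\mb u)\|/\|\mb u\|\le c_1\|\mb u\|\to 0$, so $\mb N$ is Fr\'echet differentiable at $\mb 0$ with derivative the zero operator. (If one wants to be pedantic, one checks that the zero map is a bounded linear operator on $\mc H$, which is trivial.)

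There is essentially no obstacle here: the lemma is a routine packaging of Lemma~\ref{lemma:nonlinL2} into the product space, and the only thing to be slightly careful about is that the restriction $\mb u\in\mc B_1$ is exactly what is needed to invoke the scalar estimates on $u_2\in B_1$, and that the constants $c_1,c_2$ may be taken to be the same as in Lemma~\ref{lemma:nonlinL2}. The mild ``content'' — the genuine analysis using Hardy's inequality, Lemma~\ref{est:Linfty}, and the case distinction between $p\in(1,2]$ and $p\in(2,3]$ — has already been carried out in the proof of Lemma~\ref{lemma:nonlinL2}; here we merely transport it through the trivial projection $\mb u\mapsto u_2$ and injection $w\mapsto(w,0)^T$, both of which are norm-non-increasing (the latter norm-preserving).
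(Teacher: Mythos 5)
Your proposal is correct and follows essentially the same route as the paper: reduce the vector statement to the scalar estimates of Lemma~\ref{lemma:nonlinL2} applied to the second component, and deduce Fr\'echet differentiability at $\mb 0$ with $D\mb N(\mb 0)=\mb 0$ from the quadratic bound $\|\mb N(\mb u)\|\lesssim\|\mb u\|^2$. The only cosmetic difference is that the paper obtains the quadratic bound from the Lipschitz estimate with $\mb v=\mb 0$ (using $\mb N(\mb 0)=\mb 0$), whereas you invoke the first scalar estimate directly; this is immaterial.
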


\begin{proof}
For $\mb u,\mb v \in \mc B_1$ we apply the result of Lemma \ref{lemma:nonlinL2} to obtain
\begin{eqnarray*}
\| \mb {N}(\mb u) - \mb {N}(\mb v) \|^2 = \| N(u_2) - N(v_2) \|^2_{L^2} \lesssim (\| u_2 \|^2_{L^2} + \| v_2 \|^2_{L^2}) \|u_2-v_2\|^2_{L^2}  \lesssim (\| \mb u\|^2 + \| \mb v \|^2) \|\mb u-\mb v\|^2.
\end{eqnarray*}
This implies
\begin{align*}
 & \| \mb {N}(\mb u) - \mb {N}(\mb v) \| \lesssim (\| \mb u\|^2 + \| \mb v \|^2)^{\frac{1}{2}} \|\mb u-\mb v\| \lesssim (\| \mb u\| + \| \mb v \|) \|\mb u-\mb v\|.
\end{align*}
We have $\mb {N}(\mb 0)=\mb 0$ which implies $\| \mb {N}(\mb v) \| \lesssim \|\mb v \|^2$. In particular, there exists a constant $c$ 
independent of $\mb v$ such that 
$$\frac{\| \mb {N}(\mb v) \|}{\|\mb v\|} \leq c \|\mb v \|.$$
Since the left hand side vanishes in the limit $\mb v \to \mb 0$, we infer that $\mb N$ 
is Fr\'echet differentiable at zero with $D\mb {N}(\mb 0)\mb =\mb 0$. 
\end{proof}

\subsection{Abstract formulation of the nonlinear equation}
We turn to the full nonlinear problem and write Eq.~ (\ref{eq:nonlinear_firstorder_css}) as an ordinary differential equation on $\mc H$.
With the nonlinearity defined as above it reads 

\begin{equation}\label{eq:nonlinear_ODE}
\left \{ \begin{array}{l}
\frac{d}{d\tau}\Phi(\tau)=L\Phi(\tau)  + \mb {N}(\Phi(\tau)) \mbox{ for }\tau>-\log T \\
\Phi(-\log T)=\mb{u}
\end{array} \right .
\end{equation}
for $\Phi: [-\log T,\infty) \to \mc H$ and initial data $\mb u \in \mc H$. We rewrite the above system as an integral equation, 

\begin{equation*}
\Phi(\tau) = S(\tau + \log T) \mb u + \int_{- \log T}^{\tau} S(\tau - \tau') \mb {N}(\Phi(\tau')) d\tau' \quad \text{for} \quad \tau \geq - \log T,
\end{equation*}
i.e., we are looking for mild solutions of Eq.~ (\ref{eq:nonlinear_ODE}). 
In order to remove the dependence of the equation on the blow up time $T$ we introduce 
a new variable $\Psi:[0,\infty) \to \mc H$ defined by
$$\Psi(\tau) := \Phi(\tau - \log T)$$
such that the above integral equation is now equivalent to
\begin{equation}\label{eq:Psi_integral}
\Psi(\tau) = S(\tau) \mb u + \int_{0}^{\tau} S(\tau - \tau') \mb {N}(\Psi(\tau')) d\tau' \quad \text{for} \quad \tau \geq 0.
\end{equation}
We study this equation on a Banach space $\mc X$ defined as 
$$ \mc X := \left \{ \Psi \in  C([0,\infty),\mc H): \sup_{\tau > 0} e^{\mu_p \tau} \|\Psi(\tau)\| < \infty \right \}$$
with norm 
$$ \|\Psi\|_{\mc X} := \sup_{\tau > 0} e^{\mu_p \tau} \|\Psi(\tau)\|$$
where
$$\mu_p := |\omega_p| - \varepsilon = \min\{1,\tfrac{2}{p-1}- \tfrac12 \} -\varepsilon,$$
cf.~Proposition \ref{Prop:LinearTimeEvolution}, where $\varepsilon > 0$ is arbitrary but fixed
and without loss of generality we assume $\varepsilon$ so small that $\mu_p > 0$.  
In the following, estimate \eqref{Est:LinearTimeEvolution} will be used frequently, hence most of the constants will depend on $\varepsilon$. 
However, for notational convenience we will only indicate this dependence in the proof of the main result.

\subsection{Global existence for corrected (small) initial data}

We follow the strategy of \cite{wavemapsnonlin}. First we study the following equation,
\begin{equation}\label{eq:codim-one_integraleq}
\Psi(\tau) =\ S(\tau) (1-P) \mb u - \int_{0}^{\infty} e^{\tau - \tau'} P \mb {N}(\Psi(\tau')) d\tau'
 +  \int_{0}^{\tau} S(\tau - \tau') \mb {N}(\Psi(\tau')) d\tau'. 
\end{equation}
This is the original equation modified by a correction term in order to suppress the instability coming from the symmetry mode. 
We use a fixed point argument to show existence of solutions of Eq.~ (\ref{eq:codim-one_integraleq}).
In a further step we account for the time translation symmetry of the problem and show that 
the correction can be annihilated by adjusting the blow up time $T$ (which is now encoded in the initial data)
such that we end up with a solution of Eq.~ (\ref{eq:Psi_integral}). 
For  $\delta > 0$ we define $\mc X_{\delta} \subset \mc X$  by
$$\mc X_\delta := \{ \Psi \in \mc X: \|\Psi  \|_{\mc X} \leq \delta\}.$$
\begin{lemma}\label{lemma:nonlinear_delta_est}
For $0<\delta < 1$ there exist constants $c_1,c_2 >0$ such that 
$$\|\mb {N}(\Psi(\tau)) \| \leq c_1\delta^2 e^{-2 \mu_p \tau}$$
and
$$\|\mb {N}(\Psi(\tau)) - \mb {N}(\Phi(\tau)) \| \leq c_2 \delta e^{- \mu_p \tau} \|\Psi(\tau) - \Phi(\tau) \|$$
for $\Phi, \Psi \in \mc X_{\delta}$ and $\tau >0$.
\end{lemma}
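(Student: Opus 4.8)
The plan is to apply Lemma~\ref{est:nonlinearity} directly, using the definition of the space $\mc X$ to convert the $L^2$-type quadratic bounds into exponentially decaying bounds in $\tau$. First I would observe that for $\Psi \in \mc X_\delta$ with $0 < \delta < 1$ we have $\|\Psi(\tau)\| \leq \delta e^{-\mu_p \tau} \leq \delta < 1$ for all $\tau > 0$, so $\Psi(\tau) \in \mc B_1$ and Lemma~\ref{est:nonlinearity} applies pointwise in $\tau$. Thus
\begin{equation*}
\|\mb N(\Psi(\tau))\| \leq c_1 \|\Psi(\tau)\|^2 \leq c_1 \left(\delta e^{-\mu_p \tau}\right)^2 = c_1 \delta^2 e^{-2\mu_p \tau},
\end{equation*}
which is the first claimed estimate (renaming the constant from Lemma~\ref{est:nonlinearity} as needed).

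For the second estimate, I would again note that $\Phi, \Psi \in \mc X_\delta$ gives $\Phi(\tau), \Psi(\tau) \in \mc B_1$, so the Lipschitz bound from Lemma~\ref{est:nonlinearity} yields
\begin{equation*}
\|\mb N(\Psi(\tau)) - \mb N(\Phi(\tau))\| \leq c_2 \left(\|\Psi(\tau)\| + \|\Phi(\tau)\|\right) \|\Psi(\tau) - \Phi(\tau)\| \leq c_2 \cdot 2\delta e^{-\mu_p \tau} \|\Psi(\tau) - \Phi(\tau)\|,
\end{equation*}
where in the last step I bound $\|\Psi(\tau)\|, \|\Phi(\tau)\| \leq \delta e^{-\mu_p \tau}$. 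Absorbing the factor $2$ into the constant gives the second claim.

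There is essentially no obstacle here: the lemma is a routine translation of the static nonlinear estimates of Lemma~\ref{est:nonlinearity} into the time-weighted norm, and the only point requiring a (trivial) remark is that membership in $\mc X_\delta$ with $\delta < 1$ forces $\Psi(\tau)$ into the unit ball $\mc B_1$ for every $\tau \geq 0$, which is exactly the hypothesis needed to invoke the quadratic and Lipschitz bounds. One could also phrase the argument so as to make the dependence of $c_1, c_2$ on the constants in Lemma~\ref{est:nonlinearity} explicit, but for the purposes of the fixed point argument that follows it suffices to know such constants exist.
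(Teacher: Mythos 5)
Your argument is correct and is essentially identical to the paper's proof: both apply the pointwise bounds of Lemma~\ref{est:nonlinearity}, using $\|\Psi(\tau)\| \leq \delta e^{-\mu_p \tau} < 1$ to place $\Psi(\tau), \Phi(\tau)$ in $\mc B_1$, and absorb the factor $2$ into the constant. Nothing is missing.
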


\begin{proof}
Let $\Psi \in \mc X_{\delta}$. Then $ \|\Psi(\tau) \| \leq \delta e^{-\mu_p\tau} < 1$ for all $\tau >0$ and $\delta < 1$.
Lemma \ref{est:nonlinearity} implies that there exists a constant $c_1 >0$ such that 
$$\|\mb {N}(\Psi(\tau)) \| \leq c_1  \| \Psi(\tau)\|^2  \leq c_1 \delta^2 e^{-2 \mu_p \tau}.$$
Let $\Phi \in \mc X_{\delta}$. Then there exists a constant $c_2 >0$ such that
\begin{align*}
\|\mb {N}(\Psi(\tau)) - \mb {N}(\Phi(\tau)) \| \leq \frac{c_2}{2} (\|\Psi(\tau)\| + \| \Phi(\tau) \|)  \|\Psi(\tau) - \Phi(\tau) \| \leq  c_2 \delta e^{- \mu_p \tau} \|\Psi(\tau) - \Phi(\tau) \|
\end{align*}
which implies the second estimate.
\end{proof}
We abbreviate the right hand side of Eq.~ (\ref{eq:codim-one_integraleq}) by defining the operator
\begin{equation}\label{def:K}
\mb K(\Psi,\mb u)(\tau):= S(\tau) (1-P) \mb u - \int_{0}^{\infty} e^{\tau - \tau'} P \mb {N}(\Psi(\tau')) d\tau'
 +  \int_{0}^{\tau} S(\tau - \tau') \mb {N}(\Psi(\tau')) d\tau'.
 \end{equation}

\begin{lemma}\label{lemma:Kselfandcontract}
For  $\delta>0$ sufficiently small and fixed $\mb u \in \mc H$, with $\|\mb u\| \leq \delta^2$, the operator $\mb K$ 
maps $\mc X_\delta$ into itself and is contracting, in particular
$$\|\mb K(\Phi,\mb u) - \mb K(\Psi, \mb u) \|_{\mc X} \leq \frac {1}{2} \|\Phi - \Psi\|_{\mc X}$$
for $\Phi, \Psi \in \mc X_{\delta}$.
\end{lemma}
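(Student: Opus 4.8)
The plan is to estimate the three terms in $\mb K(\Psi, \mb u)(\tau)$ separately, using the linear decay estimate from Proposition \ref{Prop:LinearTimeEvolution} together with the nonlinear bounds from Lemma \ref{lemma:nonlinear_delta_est}, and to exploit the fact that $P$ projects onto the one-dimensional eigenspace $\langle \mb g\rangle$ on which $S(\tau)$ acts as $e^\tau$. First I would observe that $\mb K(\Psi,\mb u)(\tau) = S(\tau)(1-P)\mb u + (1-P)\int_0^\tau S(\tau-\tau')\mb N(\Psi(\tau'))\,d\tau' - \int_\tau^\infty e^{\tau-\tau'} P\mb N(\Psi(\tau'))\,d\tau'$; this rewriting (splitting the Duhamel integral into its $\mc M$ and $\mc N$ parts and combining the $\mc M$-part with the correction term) is the key algebraic step that makes the $e^\tau$ growth cancel against a convergent tail integral $\int_\tau^\infty$.

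For the first term, Proposition \ref{Prop:LinearTimeEvolution} gives $\|S(\tau)(1-P)\mb u\| \le C_\varepsilon e^{-\mu_p\tau}\|\mb u\| \le C_\varepsilon \delta^2 e^{-\mu_p\tau}$ using $\|\mb u\|\le\delta^2$; for $\delta$ small enough this is $\le \frac{\delta}{3}e^{-\mu_p\tau}$ in the $\mc X$-norm. For the $\mc N$-part of the Duhamel term, I would insert $\|S(\tau-\tau')(1-P)\mb N(\Psi(\tau'))\| \le C_\varepsilon e^{-\mu_p(\tau-\tau')}\|\mb N(\Psi(\tau'))\| \le C_\varepsilon c_1\delta^2 e^{-\mu_p(\tau-\tau')}e^{-2\mu_p\tau'}$ and integrate: $\int_0^\tau e^{-\mu_p(\tau-\tau')}e^{-2\mu_p\tau'}\,d\tau' = e^{-\mu_p\tau}\int_0^\tau e^{-\mu_p\tau'}\,d\tau' \le \mu_p^{-1}e^{-\mu_p\tau}$, which again yields $\lesssim \delta^2 e^{-\mu_p\tau}$. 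For the tail term, $\|e^{\tau-\tau'}P\mb N(\Psi(\tau'))\| \le \|P\| e^{\tau-\tau'}c_1\delta^2 e^{-2\mu_p\tau'}$ and $\int_\tau^\infty e^{\tau-\tau'}e^{-2\mu_p\tau'}\,d\tau' = e^{-2\mu_p\tau}/(1+2\mu_p) \le e^{-\mu_p\tau}$ since $\mu_p > 0$; so this term is also $\lesssim \delta^2 e^{-\mu_p\tau}$. Summing the three contributions gives $\|\mb K(\Psi,\mb u)\|_{\mc X} \le C\delta^2 \le \delta$ for $\delta$ sufficiently small, so $\mb K$ maps $\mc X_\delta$ into itself (continuity of $\tau\mapsto\mb K(\Psi,\mb u)(\tau)$ in $\mc H$ follows from strong continuity of $S$ and dominated convergence).

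For the contraction estimate I would run the identical three-term decomposition on $\mb K(\Phi,\mb u)-\mb K(\Psi,\mb u)$; the linear term drops out since both share the same data $\mb u$, and in the two integral terms the second estimate of Lemma \ref{lemma:nonlinear_delta_est} gives $\|\mb N(\Phi(\tau'))-\mb N(\Psi(\tau'))\| \le c_2\delta e^{-\mu_p\tau'}\|\Phi(\tau')-\Psi(\tau')\| \le c_2\delta e^{-2\mu_p\tau'}\|\Phi-\Psi\|_{\mc X}$. The same two elementary exponential integrals as above then produce $\|\mb K(\Phi,\mb u)-\mb K(\Psi,\mb u)\|_{\mc X} \le C\delta\|\Phi-\Psi\|_{\mc X}$, and choosing $\delta$ small enough that $C\delta \le \frac12$ finishes the proof. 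The only genuinely delicate point — and the one I would flag as the "main obstacle" — is the bookkeeping that shows the $e^\tau$-growing parts of the Duhamel integral and of the correction term combine into the absolutely convergent tail $-\int_\tau^\infty e^{\tau-\tau'}P\mb N(\Psi(\tau'))\,d\tau'$; once that cancellation is made explicit, everything reduces to the two scalar integrals $\int_0^\tau e^{-\mu_p(\tau-\tau')}e^{-2\mu_p\tau'}d\tau'$ and $\int_\tau^\infty e^{\tau-\tau'}e^{-2\mu_p\tau'}d\tau'$, both of which are $O(e^{-\mu_p\tau})$ precisely because $\mu_p>0$.
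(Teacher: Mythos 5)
Your proof is correct and follows essentially the same route as the paper: the paper's decomposition of $\mb K$ into $P\mb K$ and $(1-P)\mb K$, using $S(\sigma)P=e^{\sigma}P$, produces exactly the same cancellation into the tail integral $\int_\tau^\infty e^{\tau-\tau'}P\mb N(\Psi(\tau'))\,d\tau'$ that you make explicit, and the remaining exponential estimates and the choice of small $\delta$ coincide with the paper's argument for both the self-mapping and contraction properties.
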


\begin{proof}
For fixed $(\Psi,\mb u)$ with $\Psi \in \mc X_\delta$ and $\mb u \in \mc H$ the integrals occuring in the operator $\mb K$ can be viewed as Riemann integrals
over continuous functions, which exist since  $\| P \mb {N}(\Psi(\tau)) \| \lesssim 1$ by Lemma \ref{lemma:nonlinear_delta_est}. 
To see that $\mb K(\Psi, \mb u) \in \mc X_\delta$ for $\delta$ small enough we decompose the operator 
according to
$$\mb K(\Psi, \mb u) = P\mb K(\Psi, \mb u)+(1-P)\mb K(\Psi, \mb u).$$
We apply the results of Proposition \ref{Prop:LinearTimeEvolution} and Lemma \ref{lemma:nonlinear_delta_est}.
Let $\|\mb u\| \leq \delta^2$. Then for $\tau \geq 0$ we obtain
\begin{eqnarray*}
\|P\mb K(\Psi, \mb u)(\tau)\| & = & \left \|- \int_{0}^{\infty} e^{\tau - \tau'} P \mb {N}(\Psi(\tau')) d\tau'
 +  \int_{0}^{\tau} S(\tau - \tau') P\mb {N}(\Psi(\tau')) d\tau' \right \| \\
 & \leq & \int_{\tau}^{\infty} e^{\tau - \tau'}\| P \mb {N}(\Psi(\tau'))\| d\tau' \leq c_1 \delta^2 \int_{\tau}^{\infty} e^{\tau-\tau' (1 + 2\mu_p)} d\tau' \lesssim \delta ^2 e^{-2\mu_p \tau},
\end{eqnarray*}
and
\begin{eqnarray*}
\|(1-P) \mb K(\Psi, \mb u)(\tau)\| & \leq & \|S(\tau)(1- P) \mb u \| +   \int_{0}^{\tau}\left  \| S(\tau - \tau')(1-P) \mb {N}(\Psi(\tau')) \right \| d\tau'    \\
& \lesssim  & e^{-\mu_p \tau} \|\mb u\| + \int_{0}^{\tau} e^{-\mu_p(\tau - \tau')} \|\mb {N}(\Psi(\tau'))\|d\tau'
 \lesssim  \delta^2 e^{-\mu_p \tau} + \delta^2\int_{0}^{\tau} e^{-\mu_p(\tau + \tau')} d\tau' \\
& \lesssim  &  \delta^2 e^{-\mu_p\tau}.
\end{eqnarray*}
We infer that there exist constants $c_1,c_2 > 0$ such that
$$\|P\mb K(\Psi, \mb u)(\tau)\|  \leq c_1 \delta^2 e^{-\mu_p \tau},$$
and
$$\|(1-P) \mb K(\Psi, \mb u)(\tau)\| \leq c_2 \delta^2 e^{-\mu_p \tau}.$$
Thus for $\delta \leq \min \{1, \frac{1}{2c_1}, \frac{1}{2c_2} \}$ we obtain
$$\|\mb K(\Psi, \mb u)(\tau) \| \leq \|P \mb K(\Psi, \mb u)(\tau)\| + \|(1-P)\mb K(\Psi, \mb u)(\tau) \| 
\leq \frac{\delta}{2} e^{-\mu_p \tau} + \frac{\delta}{2}e^{-\mu_p \tau} \leq \delta e^{-\mu_p \tau}.$$
Continuity of $\mb K(\Psi, \mb u)(\tau)$ as a function of $\tau$ follows essentially from strong continuity of the semigroup 
(cf.~Lemma 3.10 in \cite{wavemapsnonlin}).  
It is left to show that $\mb K$ is contracting. Let $\Psi, \Phi \in \mc X_{\delta}$. Then
\begin{eqnarray*}
& & \|P\mb K(\Phi, \mb u)(\tau)- P\mb K(\Psi, \mb u)(\tau)\| \leq   \int_{\tau}^{\infty} e^{\tau - \tau'} 
\| P \mb {N}(\Phi(\tau')) -P \mb {N}(\Psi(\tau')) \| d\tau' \\
& \lesssim & \delta  \int_{\tau}^{\infty} e^{\tau-\tau'(1+\mu_p)} \|\Phi(\tau')- \Psi(\tau') \| d\tau' 
 \lesssim  \delta \sup_{\sigma > \tau} e^{\mu_p \sigma}  \|\Phi(\sigma)- \Psi(\sigma) \| \int_{\tau}^{\infty} 
e^{\tau-\tau' (1 + 2\mu_p)} d \tau' \\
& \lesssim & \delta  e^{-2 \mu_p \tau} \|\Phi - \Psi\|_{\mc X}. 
\end{eqnarray*}
Similarly,
\begin{eqnarray*}
& & \|(1-P)\mb K(\Phi, \mb u)(\tau)- (1-P)\mb K(\Psi, \mb u)(\tau)\| \leq 
\int_{0}^{\tau} \| S(\tau - \tau')(1-P)(\mb {N}(\Phi(\tau')) - \mb {N}(\Psi(\tau'))) \|d\tau'   \\
& \lesssim & \int_{0}^{\tau}  e^{-\mu_p (\tau - \tau')} \|\mb {N}(\Phi(\tau')) - \mb {N}(\Psi(\tau')) \| d\tau' \lesssim
\delta \int_{0}^{\tau} e^{-\mu_p \tau} \|\Phi(\tau')- \Psi(\tau') \| d\tau' \\
& \lesssim &
\delta \sup_{\sigma \in (0,\tau)} e^{\mu_p \sigma}  \|\Phi(\sigma)- \Psi(\sigma) \| \int_{0}^{\tau} e^{-\mu_p (\tau + \tau')} d\tau' \lesssim  \delta e^{-\mu_p\tau} \|\Phi - \Psi\|_{\mc X}.
\end{eqnarray*}
This shows that for $\delta$ sufficiently small, 
$$\sup_{\tau > 0} e^{\mu_p\tau} \|P\mb K(\Phi, \mb u)(\tau)- P\mb K(\Psi, \mb u)(\tau)\| \leq \frac{1}{4}  \|\Phi - \Psi\|_{\mc X},$$
and 
$$\sup_{\tau > 0} e^{\mu_p\tau} \|(1-P)\mb K(\Phi, \mb u)(\tau)- (1-P)\mb K(\Psi, \mb u)(\tau)\| \leq \frac{1}{4}  \|\Phi - \Psi\|_{\mc X}, $$
which implies the claim.
\end{proof}

\begin{theorem}\label{Th:ExistenceModEq}
For $\mb u \in \mc B_1 \subset \mc H$ sufficiently small, there exists a unique solution $\Psi(\cdot;\mb u) \in \mc X$ of 
\begin{equation}\label{eq:Kmodified_integral}
\Psi(\cdot;\mb u)=\mb K(\Psi(\cdot;\mb u),\mb u).
\end{equation}
Moreover, the map $\bs \Psi: \mc U  \subset \mc B_1\to \mc X$ defined by $\mb \Psi (\mb u) = \Psi(\cdot;\mb u)$ is continuous 
 and Fr\'{e}chet differentiable at $\mb u = \mb 0$ where $\mc U$ denotes a sufficiently small open neighbourhood of zero in $\mc H$.
\end{theorem}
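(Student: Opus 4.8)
The plan is to apply the Banach fixed point theorem in the complete metric space $\mc X_\delta$ (a closed subset of the Banach space $\mc X$) for a suitable small $\delta$, and then to upgrade the resulting fixed point map to a continuous, and at the origin Fr\'echet differentiable, function of the initial datum $\mb u$. First I would fix $\delta>0$ small enough that Lemma \ref{lemma:Kselfandcontract} applies; then for every $\mb u \in \mc H$ with $\|\mb u\| \leq \delta^2$ the operator $\Psi \mapsto \mb K(\Psi,\mb u)$ maps $\mc X_\delta$ into itself and is a $\tfrac12$-contraction, so it has a unique fixed point $\Psi(\cdot;\mb u) \in \mc X_\delta$. Uniqueness in all of $\mc X$ (not just in $\mc X_\delta$) follows because any fixed point in $\mc X$ automatically lies in some $\mc X_{\delta'}$, and the contraction estimate in Lemma \ref{lemma:nonlinear_delta_est} combined with Gronwall-type reasoning forces two fixed points to coincide; alternatively one simply shrinks the admissible size of $\mb u$ so that the a priori bound $\|\Psi\|_{\mc X}\le\delta$ is the only possibility. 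This gives the map $\bs\Psi$ on the neighbourhood $\mc U:=\{\mb u\in\mc B_1:\|\mb u\|<\delta^2\}$.

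Next I would prove continuity of $\bs\Psi$. Writing $\Psi=\bs\Psi(\mb u)$, $\Phi=\bs\Psi(\mb v)$ and subtracting the two fixed point identities,
\[
\Psi(\tau)-\Phi(\tau)=S(\tau)(1-P)(\mb u-\mb v)+\big[\mb K(\Psi,\mb v)(\tau)-\mb K(\Phi,\mb v)(\tau)\big],
\]
so that by Proposition \ref{Prop:LinearTimeEvolution} and the contraction property,
\[
\|\Psi-\Phi\|_{\mc X}\le C\|\mb u-\mb v\|+\tfrac12\|\Psi-\Phi\|_{\mc X},
\]
hence $\|\bs\Psi(\mb u)-\bs\Psi(\mb v)\|_{\mc X}\le 2C\|\mb u-\mb v\|$. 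In particular $\bs\Psi$ is Lipschitz, so certainly continuous.

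For Fr\'echet differentiability at $\mb u=\mb 0$, note first that $\mb N(\mb 0)=\mb 0$ forces $\bs\Psi(\mb 0)=0$ (the zero function is the unique fixed point of $\mb K(\cdot,\mb 0)$ in $\mc X_\delta$). The natural candidate for the derivative is $D\bs\Psi(\mb 0)\mb u=\tau\mapsto S(\tau)(1-P)\mb u$, which is a bounded linear map $\mc H\to\mc X$ by Proposition \ref{Prop:LinearTimeEvolution}. To verify it, set $R(\mb u):=\bs\Psi(\mb u)-S(\cdot)(1-P)\mb u$; subtracting from the fixed point equation,
\[
R(\mb u)(\tau)=-\int_0^\infty e^{\tau-\tau'}P\mb N(\Psi(\tau'))\,d\tau'+\int_0^\tau S(\tau-\tau')\mb N(\Psi(\tau'))\,d\tau',
\]
and using $\|\mb N(\Psi(\tau))\|\lesssim\|\Psi(\tau)\|^2\le\|\bs\Psi(\mb u)\|_{\mc X}^2 e^{-2\mu_p\tau}\lesssim\|\mb u\|^2 e^{-2\mu_p\tau}$ (Lemma \ref{est:nonlinearity} and the Lipschitz bound just proved, together with $\bs\Psi(\mb 0)=0$), the same integral estimates as in Lemma \ref{lemma:Kselfandcontract} give $\|R(\mb u)\|_{\mc X}\lesssim\|\mb u\|^2$. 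Hence $\|\bs\Psi(\mb u)-\bs\Psi(\mb 0)-D\bs\Psi(\mb 0)\mb u\|_{\mc X}=o(\|\mb u\|)$, which is the claim.

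The main obstacle is bookkeeping rather than conceptual: one must make sure all the exponential weights line up so that the correction term $-\int_0^\infty e^{\tau-\tau'}P\mb N(\Psi)\,d\tau'$ (which a priori only decays like $e^{-2\mu_p\tau}$ on $\mc M$) is controlled in the $\mc X$-norm, and that the constants in the Lipschitz and quadratic estimates can be absorbed by choosing $\delta$ (equivalently, the size of $\mc U$) small — exactly the thresholds already isolated in Lemmas \ref{lemma:nonlinear_delta_est} and \ref{lemma:Kselfandcontract}. The continuity-in-$\tau$ of $\mb K(\Psi,\mb u)(\tau)$, needed so that the fixed point actually lies in $C([0,\infty),\mc H)$, follows from strong continuity of $S(\tau)$ exactly as in Lemma 3.10 of \cite{wavemapsnonlin}.
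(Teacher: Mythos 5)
Your proposal is correct and takes essentially the same route as the paper: the contraction mapping in $\mc X_\delta$ via Lemma \ref{lemma:Kselfandcontract}, uniqueness in all of $\mc X$ deferred to a Gronwall-type argument (exactly as the paper does by pointing to the proof of Theorem \ref{Th:Global_existence_shifted_equation}), the same triangle-inequality splitting yielding the Lipschitz bound $\|\bs\Psi(\mb u)-\bs\Psi(\mb v)\|_{\mc X}\lesssim\|\mb u-\mb v\|$, and the same candidate derivative $\mb v\mapsto S(\cdot)(1-P)\mb v$ with a quadratic estimate on the integral remainder. The only weak spot is your parenthetical alternative for uniqueness in $\mc X$ (shrinking $\|\mb u\|$ does not by itself force an arbitrary solution to obey the a priori bound $\|\Psi\|_{\mc X}\le\delta$), but your primary Gronwall-type argument is the one the paper relies on.
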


\begin{proof}
Lemma \ref{lemma:Kselfandcontract} and the fact that  $\mc X_{\delta}$ is a closed subset yield a unique fixed point of Eq. \eqref{eq:Kmodified_integral}
in $\mc X_{\delta}$. That this is indeed the unique solution in the whole space $\mc X$ follows by standard arguments 
(see also the proof of Theorem \ref{Th:Global_existence_shifted_equation}).
Note that for $\mb u, \mb v \in \mc U$ we have $\mb \Psi(\mb u),\mb \Psi(\mb v) \in \mc X_{\delta}$ and
\begin{eqnarray*}
& & \|\mb \Psi(\mb u) -\mb \Psi(\mb v)\|_{\mc X} = \|\mb K(\mb \Psi(\mb u),\mb u) -\mb K(\mb \Psi(\mb v),\mb v) \|_{\mc X} \\
& \leq & 
 \|\mb K(\mb \Psi(\mb u),\mb u) -\mb K(\mb \Psi(\mb v),\mb u) \|_{\mc X} +  \|\mb K(\mb \Psi(\mb v),\mb u) -\mb K(\mb \Psi(\mb v),\mb v) \|_{\mc X}.
\end{eqnarray*}
By Lemma \ref{lemma:Kselfandcontract},
$$ \|\mb K(\mb \Psi(\mb u),\mb u) -\mb K(\mb \Psi(\mb v),\mb u) \|_{\mc X} \leq \frac{1}{2} \|\mb\Psi(\mb u) -\mb \Psi(\mb v) \|_{\mc X}.$$
Inserting the definition of $\mb K$ yields
$$\|\mb K(\mb \Psi(\mb v),\mb u)(\tau) -\mb K(\mb \Psi(\mb v),\mb v)(\tau) \| = \|S(\tau)(1-P)(\mb u-\mb v)\| \leq e^{-\mu_p\tau} \|\mb u-\mb v\|$$
and we conclude that
\begin{equation}\label{Solmap_lipschitz}
 \|\mb \Psi(\mb u) - \mb \Psi(\mb v)\|_{\mc X} \lesssim \|\mb u-\mb v\|,
\end{equation}
which implies continuity. We claim that the solution map $\mb \Psi$ is Fr\'echet differentiable at $\mb u = \mb 0$. We define an
auxiliary operator $\tilde D\mb  \Psi(\mb 0):\mc H \to \mc X$ by $[\tilde D \mb \Psi(\mb 0)\mb v](\tau):=S(\tau)(1-P)\mb v$ for $\mb v \in \mc H$. 
It is obvious that this defines a bounded linear operator from $\mc H$ into $\mc X$. We show that it is indeed the Fr\'echet derivative, i.e., 
$$\lim_{\mb v \to \mb 0} \frac{1}{\| \mb v\|}\|\mb \Psi(\mb v) - \mb \Psi(\mb 0) - \tilde D \mb \Psi(\mb 0)\mb v\|_{\mc X} = 0.$$
Recall that $\mb N(\mb 0) = \mb 0$, hence $\mb \Psi(\mb 0) = \mb 0$ is a solution of Eq.~\eqref{eq:Kmodified_integral} for $\mb u=\mb 0$. 
We assume that $\mb v \in \mc U$, such that $\mb \Psi(\mb v) = \mb K(\mb \Psi(\mb v),\mb v)$. 
Inserting the definition of $\mb K$ we compute
\begin{align*}
\mb \Psi(\mb v)(\tau) - S(\tau)(1-P)\mb v  &= \int_0^\tau S(\tau - \tau') \mb N(\mb  \Psi(\mb v)(\tau'))d\tau' -
\int_0^{\infty} e^{\tau -\tau'} P \mb N(\mb  \Psi(\mb v)(\tau'))d\tau' \\
&=: G(\mb \Psi(\mb v))(\tau).
\end{align*}
Again we write $ G(\mb  \Psi(\mb v))(\tau)= P[G(\mb  \Psi(\mb v))(\tau)]+(1-P)[G(\mb  \Psi(\mb v))(\tau)]$. Estimate (\ref{Solmap_lipschitz}) and 
calculations similar to those in the proof of Lemma \ref{lemma:Kselfandcontract} yield 
\begin{eqnarray*}
\|P[G(\mb \Psi(\mb v))(\tau)] \| & \leq & \int_\tau^\infty e^{\tau -\tau'} \|P \mb N(\mb \Psi(\mb v)(\tau'))\| d\tau'
 \leq  \int_\tau^\infty e^{\tau -\tau'} \|\mb \Psi(\mb v)(\tau')\|^2  d\tau' \\
& \lesssim & \|\mb v\|^2 \int_\tau^\infty e^{\tau -\tau'(1+2\mu_p)} d\tau' \lesssim \|\mb v\|^2 e^{-2\mu_p \tau}.
\end{eqnarray*}
Similarly,
\begin{eqnarray*}
\|(1-P)[G(\mb \Psi(\mb v))(\tau)] \| & \leq & \int_0^{\tau} \|S(\tau - \tau')(1-P)\mb N(\mb  \Psi(\mb v)(\tau'))\| d\tau'  \\
&\lesssim & \int_0^{\tau}  e^{-\mu_p(\tau -\tau')} \|\mb N(\mb  \Psi(\mb v)(\tau'))\| d\tau' \lesssim   \|\mb v\|^2 e^{-\mu_p \tau}.
\end{eqnarray*}
We infer that $\|G(\mb  \Psi(\mb v))\|_{\mc X} \lesssim \|\mb v\|^2$ and thus $\lim_{\mb v \to \mb 0}  \frac{1}{\| \mb v\|}\|G(\mb  \Psi(\mb v))\|_{\mc X}=0,$ 
which implies the claim.
\end{proof}

\subsection{Global existence for arbitrary (small) initial data}

The aim of this section is to use the existence result of Theorem \ref{Th:ExistenceModEq} to obtain 
a solution of the original wave equation for arbitrary initial data (close to $\psi^T$). 
Up to now we implicitly assumed the blow up time $T$ to be fixed. However, arbitrary
 perturbations of the initial data will change the blow up time and we account for this fact by allowing $T$ to vary. 
Recall that the initial data we want to prescribe are of the form
\begin{equation}\label{origin_initialdata}
 \Psi(0)(\rho) =\left ( \begin{array}{c} \rho T^{\frac{p+1}{p-1}}g(T\rho)-\tfrac{2 \rho}{p-1}\kappa_0^{\frac{1}{p-1}} 
\\ T^{\frac{2}{p-1}}\left (T \rho f'(T\rho)+f(T\rho) \right ) -  \kappa_0^{\frac{1}{p-1}} \end{array} \right ),
\end{equation}
see Eq.~\eqref{eq:nonlinear_firstorder_css}.
We separate the dependence on $T$ and the free data $(f,g)$ by introducing
\begin{equation}\label{Def:initialdata_v_kappa}
\mb v(\rho):= \left ( \begin{array}{c}   \rho g(\rho) - \frac{2 \rho}{p-1} \kappa_0^{\frac{1}{p-1}} \\
		        \rho f'(\rho)+f(\rho)  -   \kappa_0^{\frac{1}{p-1}}          \end{array} \right ), \quad 
			 \mb \kappa(\rho):= \kappa_0^{\frac{1}{p-1}} \left ( \begin{array}{c} \frac{2 \rho}{p-1}  \\ 1 \end{array} \right ),
\end{equation}									
which are the initial data relative to the fundamental self--similar solution for $T=1$. 
We rewrite the right hand side of (\ref{origin_initialdata}) and define 
$$\mb U(\mb v,T)(\rho):=T^{\frac{2}{p-1}} [\mb v(T \rho) + \kappa(T\rho)] - \kappa(\rho).$$
The data have to be prescribed on the interval $[0,T]$ and we are confronted with the problem that we do not know $T$ in advance. 
As in \cite{wavemapsnonlin} the argument will be perturbative around $T=1$ and therefore it suffices to restrict $T$ to 
the interval $I=(\frac12,\frac32)$. In the following we set 
$$\tilde{\mc H}:= L^2(0,\tfrac32) \times L^2(0,\tfrac32).$$
\begin{lemma}\label{initialdata_op}
The function $\mb U: \tilde{\mc H} \times I \to \mc H$ is continuous and $\mb U(\mb 0,1) = \mb 0$. 
Furthermore $\mb U(\mb 0,\cdot):  I \to \mc H$ is Fr\'echet differentiable and
 $$[D_T \mb U(\mb 0, T)\rst{_{T=1}} \lambda](\rho) = \tfrac{2\lambda}{p-1} \kappa_0^{\frac{1}{p-1}} \mb g(\rho),$$
where $\lambda \in \R$ and $\mb g$ denotes the symmetry mode (cf. Eq.~ (\ref{Eq:gaugemode})).
\end{lemma}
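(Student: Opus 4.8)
The plan is to verify the three assertions — continuity, the value at $(\mb 0, 1)$, and the Fréchet derivative of $\mb U(\mb 0, \cdot)$ — essentially by unwinding definitions and applying elementary properties of the scaling/dilation action on $L^2$.

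First I would check $\mb U(\mb 0, 1) = \mb 0$: plugging $\mb v = \mb 0$ and $T = 1$ into $\mb U(\mb v, T)(\rho) = T^{\frac{2}{p-1}}[\mb v(T\rho) + \bs\kappa(T\rho)] - \bs\kappa(\rho)$ immediately gives $\bs\kappa(\rho) - \bs\kappa(\rho) = \mb 0$. For continuity of $\mb U: \tilde{\mc H} \times I \to \mc H$, I would split $\mb U$ into the piece coming from $\mb v$ and the piece coming from $\bs\kappa$. The map $(\mb v, T) \mapsto \big(\rho \mapsto T^{\frac{2}{p-1}} \mb v(T\rho)\big)$ from $\tilde{\mc H} \times I$ to $\mc H$ is continuous: since $T \in I = (\tfrac12, \tfrac32)$ we have $T\rho \in (0, \tfrac32)$ for $\rho \in (0,1)$, so the restriction/rescaling is well-defined on $\mc H$; boundedness in $\mb v$ is a change of variables ($\|T^{\frac{2}{p-1}}\mb v(T\cdot)\|_{L^2(0,1)}^2 = T^{\frac{4}{p-1}-1}\int_0^T |\mb v(s)|^2 ds \leq C_I \|\mb v\|_{\tilde{\mc H}}^2$), and joint continuity follows from this bound together with continuity in $T$ for fixed $\mb v$, which one gets first for $\mb v$ smooth and compactly supported in $(0,\tfrac32)$ and then by density. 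The $\bs\kappa$-piece is $T \mapsto \big(\rho \mapsto T^{\frac{2}{p-1}} \bs\kappa(T\rho) - \bs\kappa(\rho)\big)$; since $\bs\kappa$ is an explicit smooth (in fact polynomial) vector, $T^{\frac{2}{p-1}}\bs\kappa(T\rho) = \kappa_0^{\frac{1}{p-1}} \big(\tfrac{2}{p-1} T^{\frac{2}{p-1}+1}\rho,\, T^{\frac{2}{p-1}}\big)^T$ depends smoothly on $T$ with values in $\mc H$, so this is continuous (indeed $C^\infty$) in $T$.

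For the Fréchet derivative of $\mb U(\mb 0, \cdot): I \to \mc H$ at $T = 1$: with $\mb v = \mb 0$ we have $\mb U(\mb 0, T)(\rho) = T^{\frac{2}{p-1}}\bs\kappa(T\rho) - \bs\kappa(\rho) = \kappa_0^{\frac{1}{p-1}}\big( \tfrac{2}{p-1}(T^{\frac{2}{p-1}+1} - 1)\rho,\; T^{\frac{2}{p-1}} - 1 \big)^T$. This is a smooth $\mc H$-valued function of $T$, so differentiability is automatic and I just compute $\tfrac{d}{dT}$ at $T = 1$: the first component has derivative $\kappa_0^{\frac{1}{p-1}} \tfrac{2}{p-1}\big(\tfrac{2}{p-1}+1\big)\rho = \kappa_0^{\frac{1}{p-1}} \tfrac{2}{p-1} \cdot \tfrac{p+1}{p-1}\rho$, and the second has derivative $\kappa_0^{\frac{1}{p-1}} \tfrac{2}{p-1}$. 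Comparing with $\mb g(\rho) = \big(\tfrac{p+1}{p-1}\rho, 1\big)^T$, one sees $D_T\mb U(\mb 0, T)\rst{_{T=1}}\lambda = \tfrac{2\lambda}{p-1}\kappa_0^{\frac{1}{p-1}}\mb g(\rho)$ exactly as claimed. One should note $\tfrac{2}{p-1}\big(\tfrac{2}{p-1}+1\big) = \tfrac{2}{p-1} \cdot \tfrac{p+1}{p-1}$ for the first component, which matches $\tfrac{2}{p-1}\cdot(\mb g)_1$.

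The only mild subtlety — the ``hard part'', though it is not really hard — is the joint continuity of the $\mb v$-dependent piece in the $\mc H$-norm, because the dilation $\mb v \mapsto \mb v(T\cdot)$ is not norm-continuous in $T$ uniformly over all of $\tilde{\mc H}$ (translations/dilations are only strongly continuous). This is handled by the standard density argument: the bound $\|T^{\frac{2}{p-1}}\mb v(T\cdot)\|_{\mc H} \lesssim \|\mb v\|_{\tilde{\mc H}}$ is uniform for $T \in I$, strong continuity in $T$ holds on the dense subspace $C_c^\infty(0,\tfrac32)^2$, and an $\varepsilon/3$ argument upgrades this to joint continuity on $\tilde{\mc H}\times I$. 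Everything else is direct computation with the explicit formulas, so no further obstacle arises.
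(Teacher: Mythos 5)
Your proposal is correct and follows essentially the same route as the paper: a change-of-variables bound giving Lipschitz continuity in $\mb v$ uniformly for $T\in I$, continuity in $T$ for fixed data via density of a nice subspace (the paper uses $C[0,\tfrac32]$, you use $C_c^\infty$), and an explicit smooth-in-$T$ computation for the $\bs\kappa$-part yielding $D_T\mb U(\mb 0,T)\rst{_{T=1}}\lambda=\tfrac{2\lambda}{p-1}\kappa_0^{\frac{1}{p-1}}\mb g$. No gaps.
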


\begin{proof}
The proof of continuity is similar to the proof of Lemma $3.14$ in \cite{wavemapsnonlin}. 
We define $J:L^2(0,\frac32) \times I \to L^2(0,1)$ by   $J(v,T)(\rho):=v(T\rho)$. 
For fixed $T$ the map $J(\cdot,T): L^2(0,\frac32) \to L^2(0,1)$ is Lipschitz-continuous since 
$$ \|J(v,T) - J(\tilde v,T)\|_{L^2(0,1)}^2 = \int_0^1 |v(T\rho) - \tilde v(T \rho)|^2 d\rho = \frac{1}{T} \int_0^T  |v(\rho) - \tilde v(\rho)|^2 d\rho 
 \leq 2 \| v - \tilde v\|_{L^2(0,\frac32)}^2$$
and the continuity is uniform with respect to $T$. It is therefore sufficient to show that for fixed $v \in L^2(0,\frac32)$ the function 
$J(v,\cdot): I \to L^2(0,1)$ is continuous. This can be seen by noting that for all $v, \tilde v \in L^2(0,\frac32)$ and $T, \tilde T \in I$
\begin{align*}
 \| J(v,T) - J(v,\tilde T)\|_{L^2(0,1)}  &\leq  \|J(v,T) - J(\tilde v,T)\|_{L^2(0,1)} + \|J(\tilde v,T) -J(\tilde v,\tilde T)\| _{L^2(0,1)}   \\
& \quad +\|J(\tilde v,\tilde T) - J(v,\tilde T)\|_{L^2(0,1)} \\
&\lesssim \| v- \tilde v\|_{L^2(0,\frac32)} + \|J(\tilde v,T) - J(\tilde v,\tilde T)\|_{L^2(0,1)}.
\end{align*}
Thus, for any given $\epsilon > 0$ we can find a $\tilde v \in C[0,\frac32]$ such that
\begin{equation}
 \| J(v,T) - J(v,\tilde T)\|_{L^2(0,1)} <\frac{\epsilon}{2} + c\left(\int_0^1 |\tilde v (T\rho) - \tilde v(\tilde T\rho)|^2 d\rho 
 \right)^{\frac12}
\end{equation}
for some constant $c>0$ since $C[0,\frac32]$ is dense in $L^2(0,\frac32)$.
By the continuity of $\tilde v$, the integral vanishes in the limit $T \to \tilde T$.
The above results imply continuity of $J:L^2(0,\frac32) \times I \to L^2(0,1)$ and thus,
$$\mb U(\mb v,T)=  \left ( \begin{array}{c} T^{\frac{2}{p-1}}( J(v_1,T)+ J(\kappa_1,T) ) - 
\kappa_1 \\ T^{\frac{2}{p-1}}( J(v_2,T)+ J(\kappa_2,T) ) - \kappa_2 \end{array} \right )$$
is continuous for $\mb v=(v_1,v_2)^T \in \tilde{\mc H}$ and 
$\kappa = (\kappa_1,\kappa_2)^T$ as defined in Eq.~ (\ref{Def:initialdata_v_kappa}). 

To show differentiability we set $\mb v = \mb 0$ and consider $\mb U(\mb 0,\cdot): I \to \mc H$, which is given by
$$\mb U(\mb 0,T)(\rho) = T^{\frac{2}{p-1}}\kappa(T\rho) - \kappa(\rho) =  \kappa_0^{\frac{1}{p-1}} 
\left ( \begin{array}{c} \frac{2 \rho}{p-1} \left (T^{\frac{p+1}{p-1}} - 1 \right) \\ T^{\frac{2}{p-1}} - 1 \end{array} \right )$$
The map is obviously differentiable for all $T \in I$. Recalling the definition of the symmetry mode in Lemma \ref{lemma:spect_L} we obtain
$$[D_T \mb U(\mb 0, T)\rst{_{T=1}} \lambda](\rho) = \frac{2\lambda}{p-1}\kappa_0^{\frac{1}{p-1}} 
 \left ( \begin{array}{c} \frac{(p+1)}{p-1} \rho \\ 1 \end{array} \right ) = \frac{2\lambda}{p-1}\kappa_0^{\frac{1}{p-1}}  \mb g(\rho)$$
for $\lambda \in \R$.
\end{proof}

With these technical results at hand we now turn to the original problem. In the previous section we showed existence of solutions 
for the modified integral equation (\ref{eq:codim-one_integraleq}) with initial data $\mb u \in \mc U$, where $\mc U$ denotes a 
sufficiently small neighbourhood of $\mb 0 \in \mc H$. We rewrite the initial data in terms of $T$ and $\mb v$ as defined in
 Eq.~ (\ref{Def:initialdata_v_kappa}). Inserting in the definition yields $\mb U(\mb 0,1)=\mb 0$. By continuity $\mb U(\mb v,T) \in \mc U$ 
provided that $(\mb v, T) \in \mc V \times \tilde I$ where $\mc V$ and $\tilde I$ are sufficiently small neighbourhoods 
of $\mb 0 \in \tilde {\mc H}$ and $1 \in I$, respectively.
By Theorem \ref{Th:ExistenceModEq} there exists a solution $\mb U(\mb v,T) \mapsto  \mb \Psi( \mb U(\mb v,T)) \in \mc X$.
Recall that Eq.~ (\ref{eq:codim-one_integraleq}) is Eq.~ (\ref{eq:Psi_integral}) modified by an expontential factor times the function  
$\mb F: \mc V \times \tilde I \to \langle \mb g \rangle$ defined by
$$\mb F(\mb v, T):= P \left (\mb U(\mb v,T) + \int_{0}^{\infty} e^{- \tau'} \mb {N}(\mb \Psi(\mb U(\mb v,T))(\tau')) d\tau' \right).$$
Evaluation yields $\mb F(\mb 0, 1)=\mb 0$, i.e., for $\mb v = \mb 0$ and $T=1$ the correction vanishes and $\mb \Psi( \mb U(\mb 0,1)) = \mb 0$ is 
also a solution of Eq.~ (\ref{eq:Psi_integral}). In the following we show that for every small $\mb v$ there exists a $T$ close to one, such that this still holds true.
We need the next lemma as a prerequisite.

\begin{lemma}\label{cont_diff_correction}
$\mb F: \mc V \times \tilde I \subset \tilde H \times I \to \langle \mb g \rangle$  is continuous. Moreover
$\mb F(\mb 0,\cdot): \tilde I \to  \langle \mb g \rangle$ is Fr\'echet differentiable at $T=1$ and 
$$D_T \mb F(\mb 0,T)\rst{_{T=1}} \lambda = \tfrac{2\lambda}{p-1} \kappa_0^{\frac{1}{p-1}} \mb g$$
for $\lambda \in \R$.
\end{lemma}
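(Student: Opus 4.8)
The plan is to recognise $\mb F$ as a composition of maps whose regularity has already been established and to exploit the quadratic vanishing of $\mb N$ at $\mb 0$ to see that the nonlinear correction term contributes nothing to the derivative at $T=1$. For the continuity statement I would factor $\mb F$ through the chain $(\mb v,T)\mapsto \mb U(\mb v,T)\mapsto \mb \Psi(\mb U(\mb v,T))\mapsto \mb F(\mb v,T)$. The first arrow is continuous by Lemma \ref{initialdata_op}, and after shrinking $\mc V,\tilde I$ its image lies in the neighbourhood $\mc U$ on which $\mb\Psi$ is defined; the Lipschitz bound \eqref{Solmap_lipschitz} from Theorem \ref{Th:ExistenceModEq} then makes $(\mb v,T)\mapsto\mb\Psi(\mb U(\mb v,T))$ continuous into a fixed ball $\mc X_\delta$. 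Next, the functional $\Psi\mapsto\int_0^\infty e^{-\tau'}\mb N(\Psi(\tau'))\,d\tau'$ is Lipschitz on $\mc X_\delta$: for $\Psi,\Phi\in\mc X_\delta$, Lemma \ref{lemma:nonlinear_delta_est} together with $\|\Psi(\tau')-\Phi(\tau')\|\leq e^{-\mu_p\tau'}\|\Psi-\Phi\|_{\mc X}$ gives
$$\Bigl\|\int_0^\infty e^{-\tau'}\bigl(\mb N(\Psi(\tau'))-\mb N(\Phi(\tau'))\bigr)\,d\tau'\Bigr\| \leq c_2\delta\,\|\Psi-\Phi\|_{\mc X}\int_0^\infty e^{-\tau'(1+2\mu_p)}\,d\tau' \lesssim \delta\,\|\Psi-\Phi\|_{\mc X}.$$
Since $P\in\mc B(\mc H)$ and $(\mb v,T)\mapsto P\mb U(\mb v,T)$ is continuous by Lemma \ref{initialdata_op}, summing and applying $P$ preserves continuity, and the range lies in $P\mc H=\langle\mb g\rangle$ by Lemma \ref{lem:algebraic_multi}.

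For the differentiability statement I would split
$$\mb F(\mb 0,T)=P\,\mb U(\mb 0,T)+P\int_0^\infty e^{-\tau'}\mb N\bigl(\mb\Psi(\mb U(\mb 0,T))(\tau')\bigr)\,d\tau'=:A(T)+B(T).$$
By Lemma \ref{initialdata_op}, $\mb U(\mb 0,\cdot)$ is Fr\'echet differentiable at $T=1$ with $D_T\mb U(\mb 0,T)\rst{_{T=1}}\lambda=\tfrac{2\lambda}{p-1}\kappa_0^{\frac{1}{p-1}}\mb g$; composing with the bounded operator $P$ and using $P\mb g=\mb g$ shows $A$ is differentiable at $T=1$ with $A'(1)\lambda=\tfrac{2\lambda}{p-1}\kappa_0^{\frac{1}{p-1}}\mb g$. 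For $B$ I would show $B(T)=O(|T-1|^2)$, so that $B'(1)=0$. Indeed $\mb U(\mb 0,1)=\mb 0$ and differentiability of $\mb U(\mb 0,\cdot)$ give $\|\mb U(\mb 0,T)\|\lesssim|T-1|$ near $T=1$; then \eqref{Solmap_lipschitz} and $\mb\Psi(\mb 0)=\mb 0$ yield $\|\mb\Psi(\mb U(\mb 0,T))\|_{\mc X}\lesssim|T-1|$, hence $\|\mb\Psi(\mb U(\mb 0,T))(\tau')\|\lesssim|T-1|\,e^{-\mu_p\tau'}$. Using the quadratic bound $\|\mb N(\mb u)\|\leq c_1\|\mb u\|^2$ from Lemma \ref{est:nonlinearity},
$$\|B(T)\|\leq\|P\|\,c_1\int_0^\infty e^{-\tau'}\|\mb\Psi(\mb U(\mb 0,T))(\tau')\|^2\,d\tau'\lesssim|T-1|^2\int_0^\infty e^{-\tau'(1+2\mu_p)}\,d\tau'\lesssim|T-1|^2,$$
and since $B(1)=0$ this gives $B'(1)=0$. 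Combining the two contributions proves the formula for $D_T\mb F(\mb 0,T)\rst{_{T=1}}$.

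There is no genuine obstacle here: the whole argument is bookkeeping with the Lipschitz estimate \eqref{Solmap_lipschitz} for the solution map and the exponential decay built into $\|\cdot\|_{\mc X}$. The one point requiring care is that the correction term $B$ makes no contribution to $D_T\mb F(\mb 0,\cdot)\rst{_{T=1}}$; this is precisely where the quadratic vanishing of $\mb N$ at $\mb 0$ (Lemma \ref{est:nonlinearity}) combines with the Lipschitz property of $\mb\Psi$ near $\mb 0$ to produce the $O(|T-1|^2)$ bound, so that only the explicit linear term survives, and there $P\mb g=\mb g$ recovers the symmetry mode.
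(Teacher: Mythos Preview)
Your proposal is correct and follows essentially the same route as the paper: factor $\mb F$ as a composition to get continuity, then split $\mb F(\mb 0,T)$ into the $P\mb U(\mb 0,T)$ part and the nonlinear integral, showing the latter contributes nothing at $T=1$ because $\mb N$ vanishes quadratically at $\mb 0$. The only cosmetic difference is that the paper packages the nonlinear term via auxiliary operators $\mb B\Psi=\int_0^\infty e^{-\tau'}\Psi(\tau')\,d\tau'$ and $\tilde{\mb N}(\Psi)(\tau)=\mb N(\Psi(\tau))$ and then invokes the chain rule for Fr\'echet derivatives (using the differentiability of $\mb\Psi$ at $\mb 0$ from Theorem~\ref{Th:ExistenceModEq} and $D\tilde{\mb N}(\mb 0)=0$), whereas you obtain the same $O(|T-1|^2)$ vanishing by a direct estimate using only the Lipschitz bound \eqref{Solmap_lipschitz}.
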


\begin{proof}
To rewrite $\mb F$ in a more abstract way we introduce the integral operator $\mb B:\mc X \to \mc H, \Psi \mapsto \int_0^{\infty}e^{-\tau'} \Psi(\tau') d\tau'$, 
which is linear and bounded since
$$\|\mb B \Psi \| \leq \int_0^{\infty} e^{-\tau'} \|\Psi(\tau')\| d\tau'  \leq \sup_{\tau' >0} \|\Psi(\tau')\| \leq \|\Psi\|_{\mc X}.$$ 
We define $ \mb {\tilde N}: \mc X \to \mc X$ by $\mb {\tilde N}(\Psi)(\tau):=\mb {N}(\Psi(\tau))$. We claim that $\mb {\tilde N}$ 
is Fr\'echet differentiable at $\mb 0 \in \mc X$ and the Fr\'echet derivative at zero is given by $D \mb {\tilde N}(\mb 0) \Psi = \mb 0$ for $\Psi \in \mc X$.
This follow from $\mb {\tilde N}(\mb 0)=\mb 0$ and
$$\| \mb {\tilde N}(\Psi) \|_{\mc X} = \sup_{\tau > 0} e^{\mu_p\tau} \|\mb {N}(\Psi(\tau))\| \lesssim \sup_{\tau > 0} e^{\mu_p\tau} 
\|\Psi(\tau) \|^2 \lesssim \| \Psi \|^2_{\mc X} \quad \text{for} \quad \Psi \in \mc X_{\delta}.$$
Thus $$\frac{\| \mb {\tilde N}(\Psi) \|_{\mc X} }{ \|\Psi \|_{\mc X} } \lesssim \|\Psi \|_{\mc X} $$ 
with a constant independent of $\Psi$, which implies the claim. Now 
$$\mb F(\mb v, T) = P\left[ \mb U(\mb v,T) + \mb B \mb {\tilde N}(\mb \Psi(\mb U(\mb v,T))) \right].$$
By Lemma \ref{initialdata_op} and the continuity of $\mb {\tilde N}$ and $\mb \Psi$, respectively, we see that $\mb F$ is continuous.
To show differentiability we set $\mb v = \mb 0$ and obtain
$$\mb F(\mb 0, T) = P\left[ \mb U(\mb 0,T) + \mb B \mb {\tilde N}(\mb \Psi(\mb U(\mb 0,T))) \right].$$
The right hand side is differentiable at $T=1$ by Theorem \ref{Th:ExistenceModEq}, Lemma \ref{initialdata_op} and the above considerations. 
We conclude that
\begin{eqnarray*}
D_T \mb F(\mb 0,T)\rst{_{T=1}} \lambda & = & P D_T \mb U(\mb 0,T)\rst{_{T=1}}  \lambda + 
P \mb B  D \mb {\tilde N}(\mb 0) D \mb \Psi(\mb 0) D_T \mb U(\mb 0,T)\rst{_{T=1}} \lambda  \\
& = & P D_T \mb U(\mb 0,T)\rst{_{T=1}} \lambda =  \tfrac{2\lambda}{p-1} \kappa_0^{\frac{1}{p-1}} \mb g.
\end{eqnarray*}
\end{proof}

\begin{lemma}
Let $\tilde {\mc V} \subset \tilde H$ be a sufficiently small neighbourhood of $\mb 0$. For every $\mb v \in \tilde {\mc V}$ there 
exists a $T \in \tilde I \subset (\frac12,\frac32)$, such that $\mb F(\mb v, T) = \mb 0$.
\end{lemma}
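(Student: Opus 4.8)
The plan is to exploit that $\mb F$ takes values in the one--dimensional real space $\langle\mb g\rangle$ and that, by Lemma~\ref{cont_diff_correction}, the $T$--derivative at the base point $(\mb 0,1)$ does not vanish. Since $\mb g\neq\mb 0$ we may write $\mb F(\mb v,T)=F(\mb v,T)\,\mb g$ with a uniquely determined scalar function $F\colon\mc V\times\tilde I\to\R$. It inherits continuity from $\mb F$, and $F(\mb 0,\cdot)$ is Fr\'echet differentiable at $T=1$ with $F(\mb 0,1)=0$ (because $\mb F(\mb 0,1)=\mb 0$) and $\partial_T F(\mb 0,1)=\tfrac{2}{p-1}\kappa_0^{1/(p-1)}=:c>0$; positivity holds since $\kappa_0=\tfrac{2(p+1)}{(p-1)^2}>0$ and $p>1$. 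Consequently $F(\mb 0,T)=c(T-1)+o(|T-1|)$ as $T\to1$.

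Next I would pick $\eta>0$ so small that $[1-\eta,1+\eta]\subset\tilde I$ and the remainder term is dominated, giving $F(\mb 0,1+\eta)\geq\tfrac{c\eta}{2}>0$ and $F(\mb 0,1-\eta)\leq-\tfrac{c\eta}{2}<0$. By continuity of $\mb F$ (hence of $F$) at the two points $(\mb 0,1\pm\eta)$, there is a neighbourhood $\tilde{\mc V}\subset\mc V$ of $\mb 0$ in $\tilde{\mc H}$ such that $|F(\mb v,1\pm\eta)-F(\mb 0,1\pm\eta)|<\tfrac{c\eta}{4}$ for all $\mb v\in\tilde{\mc V}$; in particular $F(\mb v,1+\eta)>0>F(\mb v,1-\eta)$ for every such $\mb v$.

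Finally, fixing $\mb v\in\tilde{\mc V}$, the function $T\mapsto F(\mb v,T)$ is continuous on $[1-\eta,1+\eta]$ and changes sign there, so the intermediate value theorem produces $T\in(1-\eta,1+\eta)\subset\tilde I$ with $F(\mb v,T)=0$, i.e.\ $\mb F(\mb v,T)=\mb 0$, which is the assertion.

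The main obstacle is that the available regularity is too weak for the implicit function theorem: $\mb F$ is only known to be continuous in $\mb v$ and differentiable in $T$ at the single point $T=1$. The sign--change argument above is precisely the one--dimensional substitute, using only the non--degeneracy $\partial_TF(\mb 0,1)\neq0$ together with joint continuity near $(\mb 0,1\pm\eta)$; one just has to be mildly careful to shrink $\tilde{\mc V}$ so that it stays inside $\mc V$ (equivalently, so that $\mb U(\mb v,T)\in\mc U$ and the solution map of Theorem~\ref{Th:ExistenceModEq} is defined), which is harmless.
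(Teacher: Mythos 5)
Your argument is correct and follows essentially the same route as the paper: identify $\mb F$ with a scalar function via the one--dimensional range $\langle\mb g\rangle$, use the nonvanishing $T$--derivative at $(\mb 0,1)$ from Lemma \ref{cont_diff_correction} to get a sign change of $F(\mb 0,\cdot)$ near $T=1$, propagate the signs to nearby $\mb v$ by continuity, and conclude with the intermediate value theorem. The only cosmetic difference is that you quantify the sign change via the expansion $F(\mb 0,T)=c(T-1)+o(|T-1|)$ with $c>0$, whereas the paper only invokes that the derivative is nonzero.
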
 

\begin{proof}
The range of $\mb F$ is contained in $\langle \mb g \rangle$, which is a one dimensional vector space. Thus, there 
exists an isomorphism $i: \langle \mb g \rangle \to \R $ such that
  $i(c \mb g)=c $ for $c \in \R$. We set $f := i \circ \mb F$, where $f: \mc V \times \tilde I \to \R$ is continuous
and $\mb F(\mb 0, 1)=\mb 0$ implies $f(\mb 0, 1)=0$. Lemma \ref{cont_diff_correction} shows that $f(\mb 0,\cdot): \tilde I \to \R$ is differentiable
at $T=1$ and $D_T f(\mb 0,T)\rst{_{T=1}} \neq 0$.
Consequently, there exist values $T_1,T_2 \in \tilde I$ such that $f(\mb 0,T_1) > 0$ and $f(\mb 0, T_2)< 0$. 
Continuity of $f$ with respect to the first variable implies that there exists an open neighbourhood $\tilde {\mc V} \subset \mc V$ such
 that $f(\mb v, T_1) >0$ and $f(\mb v, T_2) < 0$ for $\mb v \in \tilde {\mc V}$. For $\mb v \in \tilde  {\mc V}$ consider $f(\mb v, \cdot): \tilde I \to \R$. 
By continuity of $f(\mb v,T)$ with respect to $T$ and the intermediate value theorem we conclude that
there exists a $T^* \in (T_1,T_2)$ such that $f(\mb v, T^*)=0$.
\end{proof}

This yields the next result.
\begin{theorem}
\label{Th:Global_existence_shifted_equation}
Let $\mb v \in \tilde H$ be sufficiently small. 
Then there exists a $T$ close to $1$ such that
\begin{equation}
\label{Eq:IntEqPsi}
\Psi(\tau)=S(\tau) \mb U(\mb v,T) + \int_{0}^{\tau} S(\tau - \tau') \mb {N}(\Psi(\tau'))d\tau', \quad \tau \geq 0
\end{equation}
has a continuous solution $\Psi: [0, \infty) \to \mc H$ satisfying
$$\| \Psi (\tau)\| \leq \delta e^{-\mu_p \tau}$$
for all $\tau \geq 0$ and some $\delta \in (0,1)$. Moreover, this solution is unique in $C([0,\infty),\mc H)$.
\end{theorem}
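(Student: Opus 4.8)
The plan is to obtain the solution as the fixed point $\mb\Psi(\mb U(\mb v,T))$ produced by Theorem~\ref{Th:ExistenceModEq}, with $T$ the value supplied by the preceding lemma, and then to observe that the Lyapunov--Perron correction term in $\mb K$ drops out precisely because $\mb F(\mb v,T)=\mb 0$. Concretely, I would first shrink the neighbourhood of $\mb 0$ in $\tilde H$ so that the preceding lemma gives a $T\in\tilde I$ with $\mb F(\mb v,T)=\mb 0$ and, by continuity of $\mb U$ and $\mb U(\mb 0,1)=\mb 0$ (Lemma~\ref{initialdata_op}), so that $\mb u:=\mb U(\mb v,T)$ lies in the neighbourhood $\mc U$ on which the solution map $\mb\Psi$ of Theorem~\ref{Th:ExistenceModEq} is defined. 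Put $\Psi:=\mb\Psi(\mb u)\in\mc X_\delta$; then $\Psi=\mb K(\Psi,\mb u)$, membership in $\mc X_\delta$ gives at once $\|\Psi(\tau)\|\le\delta e^{-\mu_p\tau}$ for all $\tau\ge0$ with some $\delta\in(0,1)$, and continuity of $\Psi:[0,\infty)\to\mc H$ is part of the definition of $\mc X$.

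Next I would verify that $\Psi$ solves \eqref{Eq:IntEqPsi}. In $\mb K(\Psi,\mb u)(\tau)$ the correction integral rewrites, using that $P\in\mc B(\mc H)$ commutes with the integral, as $-e^\tau P\int_0^\infty e^{-\tau'}\mb N(\Psi(\tau'))\,d\tau'$, while $S(\tau)(1-P)\mb u=S(\tau)\mb u-e^\tau P\mb u$ by Proposition~\ref{Prop:LinearTimeEvolution}; hence
$$\mb K(\Psi,\mb u)(\tau)=S(\tau)\mb u-e^\tau P\Bigl(\mb u+\int_0^\infty e^{-\tau'}\mb N(\Psi(\tau'))\,d\tau'\Bigr)+\int_0^\tau S(\tau-\tau')\mb N(\Psi(\tau'))\,d\tau'.$$
Since $\mb u=\mb U(\mb v,T)$ and $\Psi=\mb\Psi(\mb U(\mb v,T))$, the bracketed $P$-projection is exactly $\mb F(\mb v,T)=\mb 0$, so $\Psi(\tau)=\mb K(\Psi,\mb u)(\tau)$ reduces to \eqref{Eq:IntEqPsi}.

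For uniqueness in $C([0,\infty),\mc H)$ I would argue on finite intervals. If $\tilde\Psi\in C([0,\infty),\mc H)$ is another solution of \eqref{Eq:IntEqPsi} and $\tau_0>0$ is fixed, then on $[0,\tau_0]$ both $\Psi$ and $\tilde\Psi$ stay in a fixed ball of $\mc H$ on which $\mb N$ is Lipschitz --- this follows from the pointwise bounds on $n$ recorded before Lemma~\ref{lemma:nonlinL2} by the computation in that proof, now with the Lipschitz constant depending on the radius of the ball --- and $\|S(\tau)\|$ is bounded on $[0,\tau_0]$. Subtracting the two integral equations, estimating with these bounds, and applying Gronwall's inequality forces $\Psi=\tilde\Psi$ on $[0,\tau_0]$; since $\tau_0$ is arbitrary, $\Psi=\tilde\Psi$ on $[0,\infty)$.

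I expect the only real content to be the cancellation in the second step, where the vanishing of $\mb F$ removes the exponentially growing correction and turns a solution of the modified equation~\eqref{eq:codim-one_integraleq} into one of the genuine equation~\eqref{eq:Psi_integral}; the decay bound is inherited for free from $\mc X_\delta$. The one mildly delicate point is that uniqueness is asserted in the full space $C([0,\infty),\mc H)$, not merely among decaying solutions, which is exactly why one localises to finite time and relies solely on the local Lipschitz property of $\mb N$ rather than on the sharpened estimates of Lemma~\ref{lemma:nonlinear_delta_est}.
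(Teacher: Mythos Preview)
Your proposal is correct and follows essentially the same route as the paper. The existence part is just a more explicit rendering of what the paper compresses into ``follows from the above considerations'': you spell out how $\mb F(\mb v,T)=\mb 0$ kills the $e^\tau P(\cdots)$ term in $\mb K$, which is exactly the intended mechanism. For uniqueness the paper argues slightly differently---it first forces the competing solution into the unit ball $\mc B_1$ (by choosing $\varepsilon<\tfrac{1-\delta}{2}$) so as to invoke Lemma~\ref{est:nonlinearity} directly, and then iterates a short-time contraction rather than Gronwall---but your variant, using the local Lipschitz bound implicit in the proof of Lemma~\ref{lemma:nonlinL2} on an arbitrary ball, is equally valid and arguably cleaner.
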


\begin{proof}
The existence of a solution $\Psi \in \mc X_\delta$ follows from the above considerations. 
Let $\Phi \in C([0,\infty),\mc H)$ be another solution satisfying the same equation. We assume that $\Psi \neq \Phi$.
By continuity, there exists an $\varepsilon \in (0,\tfrac{1-\delta}{2})$ and a $\tau_0 > 0$ such that
\begin{equation*}
\varepsilon < \|\Psi(\tau_0)-\Phi(\tau_0) \|
\end{equation*}
and 
$$\|\Psi(\tau)-\Phi(\tau) \| < 2 \varepsilon, \quad \tau \in [0,\tau_0],$$
which yields $\| \Phi(\tau) \| < 1$. 
For $\tau \in [0,\tau_0]$ we obtain
\begin{eqnarray*}
\|\Psi(\tau)-\Phi(\tau) \|  & \leq & c \int_0^{\tau} e^{\tau-\tau'} \|\mb {N}(\Psi(\tau')) -\mb {N}(\Phi(\tau')) \|d\tau' \\
& \leq & C(\tau_0) (e^{\tau} -1) \sup_{\tau' \in [0,\tau]} \|\Psi(\tau')-\Phi(\tau') \| \\
\end{eqnarray*}
by applying Lemma \ref{est:nonlinearity}. We infer that there exists a $\tau_1 \in (0,\tau_0]$ such that 
$$\sup_{\tau \in [0,\tau_1]} \|\Psi(\tau)-\Phi(\tau) \|  \leq \frac12 \sup_{\tau \in [0,\tau_1]} \|\Psi(\tau)-\Phi(\tau) \| $$ 
which implies $\Psi(\tau)=\Phi(\tau)$ for all $\tau \in [0,\tau_1]$. Iterating this argument yields $\Psi(\tau)=\Phi(\tau)$ for $\tau \in [0,\tau_0]$, 
which contradicts $\|\Psi(\tau_0)-\Phi(\tau_0) \| > \varepsilon $.
\end{proof}

\begin{proposition}(Global existence for arbitrary, small initial data)
\label{Prop:Global_existence}
Let $\varepsilon > 0$ be small enough such that $\mu_p =|\omega_p| - \varepsilon > 0$. Let $\mb v \in L^2(0,\frac32) \times L^2(0,\frac32)$ be sufficiently small. Then there exists a $T$ close to $1$ such that
\begin{equation}\label{Eq:IntegralEqPhi}
\Phi(\tau)=S(\tau + \log T) \mb U(\mb v,T) + \int_{-\log T}^{\tau} S(\tau - \tau') \mb {N}(\Phi(\tau'))d\tau', \quad \tau \geq - \log T
\end{equation}
has a continuous solution $\Phi: [-\log T, \infty) \to \mc H$ satisfying
$$\| \Phi (\tau)\| \leq C_{\varepsilon} e^{-\mu_p \tau}$$
for all $\tau \geq - \log T$ and a constant $C_{\varepsilon} > 0$ depending on $\varepsilon$. 
Moreover, this solution is unique in $C([-\log T, \infty) ,\mc H)$. 
Thus, $\Phi$ is the unique global mild solution of Eq.~ (\ref{eq:nonlinear_ODE}) with initial data $\Phi(-\log T)=\mb U(\mb v,T)$.
\end{proposition}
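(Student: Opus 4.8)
The plan is to transfer the results already established for the time-shifted equation \eqref{Eq:IntEqPsi} back to the original time variable $\tau \in [-\log T, \infty)$ via the substitution $\Psi(\tau) = \Phi(\tau - \log T)$, equivalently $\Phi(\tau) = \Psi(\tau + \log T)$. By Theorem \ref{Th:Global_existence_shifted_equation} there is a $T$ close to $1$ (produced by the previous lemma, so $T \in \tilde I \subset (\tfrac12,\tfrac32)$) and a continuous $\Psi \in \mc X_\delta$, $\Psi:[0,\infty)\to\mc H$, solving
\[ \Psi(\tau)=S(\tau)\mb U(\mb v,T)+\int_0^\tau S(\tau-\tau')\mb N(\Psi(\tau'))\,d\tau',\quad \tau\geq 0, \]
unique in $C([0,\infty),\mc H)$. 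I would simply define $\Phi(\tau):=\Psi(\tau+\log T)$ for $\tau\geq-\log T$ and check that this is the object claimed in the proposition.

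First I would verify that $\Phi$ solves \eqref{Eq:IntegralEqPhi}. Evaluating the $\Psi$-equation at $\tau+\log T$ gives $\Psi(\tau+\log T)=S(\tau+\log T)\mb U(\mb v,T)+\int_0^{\tau+\log T}S(\tau+\log T-\sigma)\mb N(\Psi(\sigma))\,d\sigma$, and the change of variables $\sigma=\tau'+\log T$ in the integral turns the lower limit into $-\log T$ and the integrand into $S(\tau-\tau')\mb N(\Phi(\tau'))$; this is exactly \eqref{Eq:IntegralEqPhi} with $\mb u=\mb U(\mb v,T)$. Continuity of $\Phi$ on $[-\log T,\infty)$ is inherited from continuity of $\Psi$ on $[0,\infty)$.

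Next I would read off the decay bound. Since $\Psi\in\mc X_\delta$ we have $\|\Psi(\sigma)\|\leq\delta e^{-\mu_p\sigma}$ for all $\sigma\geq 0$, and for $\tau\geq-\log T$ we have $\sigma:=\tau+\log T\geq 0$, so $\|\Phi(\tau)\|=\|\Psi(\tau+\log T)\|\leq\delta e^{-\mu_p(\tau+\log T)}=\delta T^{-\mu_p}e^{-\mu_p\tau}$. Because $T\in(\tfrac12,\tfrac32)$ we have $T^{-\mu_p}\leq 2^{\mu_p}$, so the claim holds with $C_\varepsilon:=2^{\mu_p}\delta$; the dependence on $\varepsilon$ enters only through $\mu_p=|\omega_p|-\varepsilon$ and through the smallness threshold $\delta$, and the constant is uniform in $T$ since $T$ ranges over the fixed compact interval $[\tfrac12,\tfrac32]$.

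For uniqueness I would run the correspondence in reverse: if $\Phi\in C([-\log T,\infty),\mc H)$ is any solution of \eqref{Eq:IntegralEqPhi}, then $\tilde\Psi(\tau):=\Phi(\tau-\log T)$ lies in $C([0,\infty),\mc H)$ and, by the same change of variables, solves \eqref{Eq:IntEqPsi}; the uniqueness assertion of Theorem \ref{Th:Global_existence_shifted_equation} forces $\tilde\Psi=\Psi$, hence $\Phi=\Psi(\cdot+\log T)$, the function constructed above. Finally, \eqref{Eq:IntegralEqPhi} is precisely the Duhamel (mild) formulation of the abstract Cauchy problem \eqref{eq:nonlinear_ODE} with initial datum $\mb U(\mb v,T)$, so $\Phi$ is its unique global mild solution. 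I do not expect any genuine obstacle here: the argument is pure bookkeeping, and the only points needing a moment's care are the change of variables in the Duhamel integral and the observation that $C_\varepsilon$ can be chosen independently of the (compactly varying) blow up time $T$.
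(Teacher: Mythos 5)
Your proposal is correct and follows exactly the route the paper intends: the paper states this proposition as an immediate consequence of Theorem \ref{Th:Global_existence_shifted_equation} (``This yields the next result''), obtained by undoing the time shift $\Psi(\tau)=\Phi(\tau-\log T)$ introduced before Eq.~\eqref{eq:Psi_integral}. Your change of variables in the Duhamel integral, the transfer of the decay bound with $C_\varepsilon \le 2^{\mu_p}\delta$ uniformly in $T\in(\tfrac12,\tfrac32)$, and the reverse correspondence for uniqueness supply precisely the bookkeeping the paper leaves implicit.
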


\subsection{Proof of Theorem \ref{Th:Main}}

\begin{proof}
We translate the result of Proposition \ref{Prop:Global_existence} back to the original coordinates $(t,r)$. 
Let $(f,g)$ satisfy the assumption of Theorem \ref{Th:Main}. 
For the fundamental self--similar solution with $T=1$ we have 
$$\psi^1(0,r)=\kappa_0^{\frac{1}{p-1}}, \quad \psi_t^1(0,r)=\tfrac{2}{p-1} \kappa_0^{\frac{1}{p-1}}.$$ We define
$$ v_1(\rho) := \rho g(\rho) - \tfrac{2 \rho}{p-1} \kappa_0^{\frac{1}{p-1}}, \quad  v_2(\rho) := f(\rho) + \rho f'(\rho) - \kappa_0^{\frac{1}{p-1}},$$
such that $\mb v =(v_1,v_2)^T \in L^2(0,\frac32) \times L^2(0,\frac32)$ and 
$$\|\mb v \|_{\tilde {\mc H}} = \|(f,g) - (\psi^1(0,\cdot),\psi_t^1(0,\cdot))\|_{\mc E(\frac{3}{2})}$$
We may assume $\mb v$ small enough to satisfy the assumptions of Proposition \ref{Prop:Global_existence}
and we infer that there exists a unique global mild solution $\Phi \in C([-\log T,\infty),\mc H)$ of Eq.~ (\ref{eq:nonlinear_ODE}) for $T$ close
to $1$ with initial data $\Phi(-\log T) =\mb U(\mb v,T)$ and
$$\| \Phi(\tau) \| \leq C_{\varepsilon} e^{-(|\omega_p|-\varepsilon)\tau}$$
for all $\tau \geq - \log T$.
By definition $$\Phi(\tau)(\rho) = (\phi_1(\tau,\rho),\phi_2(\tau,\rho))^T$$ is a solution of Eq.~ (\ref{eq:nonlinear_firstorder_css}) and 
Eq.~  (\ref{Eq:ReconstructField}) yields
$$\psi(t,r) = \psi^T(t,r) + (T-t)^{-\frac{2}{p-1}} r^{-1} \int_0^r \phi_2(-\log(T-t),\tfrac{r'}{T-t}) dr'$$
and
$$\psi_t(t,r)= \psi_t^T(t,r) + (T-t)^{-\frac{2}{p-1}} r^{-1} \phi_1(-\log(T-t),\tfrac{r}{T-t}).$$
For $\varphi = \psi - \psi^T$ we obtain
\begin{small}
\begin{eqnarray*}
\|(\varphi(t,\cdot),\varphi_t(t,\cdot) \|_{\mc E(T-t)}^2 & = & (T-t)^{-\frac{4}{p-1}} \left( \int_0^{T-t} |\phi_2(-\log(T-t),\tfrac{r}{T-t})|^2 dr + 
 \int_0^{T-t} |\phi_1(-\log(T-t),\tfrac{r}{T-t})|^2 dr  \right) \\
& =&  (T-t)^{\frac{p-5}{p-1}} \left( \int_0^{1} |\phi_2(-\log(T-t),\rho)|^2 d\rho +  \int_0^{1} |\phi_1(-\log(T-t),\rho)|^2 d\rho  \right)\\
& = & (T-t)^{\frac{p-5}{p-1}} \|\Phi(-\log(T-t))\|^2 \leq C^2_{ \varepsilon} (T-t)^{\frac{p-5}{p-1} + 2(|\omega_p|-\varepsilon)}.
\end{eqnarray*}
\end{small}
Thus, 
$$\|(\psi(t,\cdot),\psi_t(t,\cdot)) - (\psi^T(t,\cdot),\psi^T_t(t,\cdot) )\|_{\mc E(T-t)} \leq C_{\varepsilon} (T-t)^{\frac{p-5}{2(p-1)} 
+|\omega_p|- \varepsilon}.$$
\end{proof}

\bibliography{wave}
\bibliographystyle{plain}

\end{document}